\theoremstyle{plain}
\newtheorem{thm}{Theorem}[section]
\newtheorem{lem}[thm]{Lemma}
\newtheorem{prop}[thm]{Proposition}
\newtheorem{cor}[thm]{Corollary}
\newtheorem*{thm*}{Theorem}
\newtheorem*{cor*}{Corollary}
\newtheorem{thmintro}{Theorem}
\newtheorem{propintro}[thmintro]{Proposition}
\theoremstyle{definition}
\newtheorem{defn}[thm]{Definition}
\newtheorem*{dfn*}{Definition}
\newtheorem{ex}[thm]{Example}
\newtheorem{rmk}[thm]{Remark}
\renewcommand{\o}{\circ}
\newcommand{\wt}{\widetilde}
\newcommand{\R}{\mathbb{R}}
\newcommand{\N}{\mathbb{N}}
\newcommand{\s}{\sigma}
\newcommand{\ra}{\rightarrow}
\newcommand{\cu}{\subseteq}
\newcommand{\G}{\Gamma}
\newcommand{\mc}{\mathcal}
\newcommand{\mf}{\mathfrak}
\newcommand{\x}{\times}
\newcommand{\eps}{\epsilon}
\newcommand{\Om}{\Omega}
\newcommand{\wh}{\widehat}
\newcommand{\mscr}{\mathscr}
\begin{document}

\title{Roller boundaries for median spaces and algebras}
\author{Elia Fioravanti}

\begin{abstract}
We construct compactifications for median spaces with compact intervals, generalising Roller boundaries of ${\rm CAT}(0)$ cube complexes. Examples of median spaces with compact intervals include all finite rank median spaces and all proper median spaces of infinite rank. Our methods also apply to general median algebras, where we recover the zero-completions of \cite{Bandelt-Meletiou}. Along the way, we prove various properties of halfspaces in finite rank median spaces and a duality result for locally convex median spaces.
\end{abstract}


\maketitle

\tableofcontents

\section{Introduction.}

The aim of this paper is to construct a compactification with good median properties for certain classes of median algebras and median spaces. Median algebras were originally introduced in order theory as a common generalisation of dendrites and lattices; they have been extensively studied in relation to semi-lattices (see e.g.\ \cite{Sholander,Isbell,Bandelt-Hedlikova}) and, more recently, in more geometrical terms because of their connections to ${\rm CAT}(0)$ cube complexes and median spaces (for instance in \cite{Roller,Bow1,Bow2}). 

A metric space $X$ is said to be \emph{median} if, for any three points $x_1,x_2,x_3$ of $X$, there exists a unique \emph{median}, i.e.\ a unique point $m=m(x_1,x_2,x_3)\in X$ such that $d(x_i,x_j)=d(x_i,m)+d(m,x_j)$ for all $1\leq i<j\leq 3$. In this case, we refer to the induced map $m\colon X^3\ra X$ as the \emph{median map}. The $0$-skeleton of any ${\rm CAT}(0)$ cube complex becomes a median metric space if we endow it with the restriction of the intrinsic path metric of the $1$-skeleton. More generally, every real tree and every ultralimit of median spaces is median. The latter include, in particular, all asymptotic cones of cube complexes.

Further examples of median spaces arise from Guirardel cores of pairs of actions on real trees \cite{Guirardel} and from asymptotic cones of coarse median groups of finite rank \cite{Bow1,Zeidler}. Examples of the latter are provided by mapping class groups, cubulated groups and most irreducible 3-manifold groups. Finally, we remark that $L^1(X,\mu)$ is a median space for any measure space $(X,\mu)$. For additional literature on the subject, see for instance \cite{Vel, Verheul, Nica-thesis, CDH, Bow4} and references therein.

The theory of median spaces has two essentially distinct flavours. On the one hand, the study of infinite dimensional median spaces is strongly related to functional analysis. For instance, a locally compact, second countable group admits a (metrically) proper action on a median space if and only if it has the Haagerup property. Similarly, Kazhdan groups are precisely those that can act on median spaces only with bounded orbits \cite{Cherix-Martin-Valette,CDH}. 

On the other hand, the study of finite dimensional median spaces (\emph{finite rank}\footnote{For connected median spaces, the \emph{rank} can be defined as the supremum of the topological dimensions of locally compact subsets; see Section~\ref{median algebras}.} in our terminology) strongly resembles that of ${\rm CAT}(0)$ cube complexes, with the additional pathologies typical of real trees.

A key feature of cube complexes is that they come equipped with a collection of hyperplanes. These give each ${\rm CAT}(0)$ cube complex a canonical structure of \emph{space with walls} \cite{Haglund-Paulin}. Conversely, every space with walls can be canonically embedded into a ${\rm CAT}(0)$ cube complex \cite{Sageev, Nica, Chatterji-Niblo}. In fact, the relationship between spaces with walls and cube complexes can be viewed as a form of duality; see Corollary~4.10 in \cite{Nica}. A similar phenomenon arises for more general median spaces, as we now describe.

Spaces with measured walls were introduced in \cite{Cherix-Martin-Valette}; they provide useful characterisations for the Haagerup and Kazhdan properties \cite{Cherix-Martin-Valette, Cornulier-Tessera-Valette, CDH}. Each space with measured walls $Z$ can be canonically embedded into a median space $\mc{M}(Z)$ called its \emph{medianisation}. Conversely, to every median space $X$ there corresponds a canonical set of walls, namely its \emph{convex walls}; this induces a structure of space with measured walls on $X$ such that $X\hookrightarrow\mc{M}(X)$ \cite{CDH}. 

We prove the following analogue of the duality result available for cube complexes (Corollary~4.10 in \cite{Nica}). In particular, this applies to all complete, finite rank median spaces:

\begin{thmintro}\label{A}
For every complete, locally convex median space $X$, the inclusion $X\hookrightarrow\mc{M}(X)$ is a surjective isometry.
\end{thmintro}

As in cube complexes, walls split every median space into \emph{halfspaces}. In general, the behaviour of halfspaces can be extremely complicated. For instance, if $(X,\mu)$ is a standard probability space with no atoms, every halfspace of $L^1(X,\mu)$ is dense (Example~\ref{dense halfspaces in L^1}). This however does not happen in finite rank median spaces:

\begin{propintro}\label{B}
In a complete, finite rank median space every halfspace is either open or closed. If $\mf{h}_1\supsetneq ... \supsetneq\mf{h}_k$ is a chain of halfspaces with $k>2\cdot\text{rank}(X)$, the closure $\overline{\mf{h}_k}$ is contained in the interior of $\mf{h}_1$.
\end{propintro}

Many of the analogies between median spaces and ${\rm CAT}(0)$ cube complexes resemble those between real trees and simplicial trees. It is thus natural to wonder (see e.g.\ Question~1.11 in \cite{CDH}) whether a group acting on a median space with unbounded orbits (resp.\ properly) must have an action on a ${\rm CAT}(0)$ cube complex with unbounded orbits (resp.\ proper). 

Both questions have a negative answer. For instance, irreducible lattices in $O(4,1;\R)\x O(3,2;\R)$ do not have property (T), but all their actions on ${\rm CAT}(0)$ cube complexes fix a point; see Theorem~6.14 in \cite{Cor2}. Furthermore, Baumslag-Solitar groups $BS(m,n)$ with $m\neq n$ have the Haagerup property, but do not act properly on any ${\rm CAT}(0)$ cube complex \cite{Haglund}. 

We are however unable to answer the previous questions under the additional assumption that the median space be of \emph{finite rank}. Even for groups acting on real trees, it is a delicate matter \cite{Minasyan}. See \cite{Chatterji-Drutu} for a discussion of similar problems.

In the present paper, we bring the analogies between finite dimensional ${\rm CAT}(0)$ cube complexes and finite rank median spaces one step further, by extending to median spaces the construction of the Roller compactification.

Roller boundaries of ${\rm CAT}(0)$ cube complexes are implicit in \cite{Roller}, although the definition that is most commonly used today probably first appeared in \cite{BCGNW}. They have been profitably used to obtain various interesting results, for instance, without attempting to be exhaustive, in \cite{BCGNW, Nevo-Sageev,CFI,Fernos,Fernos-Lecureux-Matheus}. 

It is well-known that Roller boundaries of cube complexes can be given the following two equivalent characterisations. We denote by $\mscr{H}$ the set of halfspaces of the cube complex $X$ and do not distinguish between $X$ and its $0$-skeleton. 

\begin{enumerate}
\item We can embed $X$ into $2^{\mscr{H}}$ by mapping each vertex $v$ to the set $\s_v:=\{\mf{h}\in\mscr{H}\mid v\in\mf{h}\}$. The space $2^{\mscr{H}}$ is compact with the product topology. Thus, the closure $\overline X$ of $X$ inside $2^{\mscr{H}}$ is compact; we refer to it as the \emph{Roller compactification}. The Roller boundary is the set $\partial X:=\overline X\setminus X$. 
\item An \emph{ultrafilter} on $\mscr{H}$ is a maximal subset $\s\cu\mscr{H}$ such that any two halfspaces in $\s$ intersect. The Roller compactification $\overline X$ coincides with the subset of $2^{\mscr{H}}$ consisting of all ultrafilters. Boundary points correspond to ultrafilters that contain infinite descending chains of halfspaces.
\end{enumerate}

It should be noted that a different definition of the Roller boundary appears in \cite{Guralnik}; see \cite{Genevois} for a discussion of this alternative notion. 

For a general median space $X$, we will give four equivalent definitions of the Roller compactification. We sketch them here to illustrate the issues that arise when leaving the discrete world of cube complexes.

\begin{enumerate}
\item As in cube complexes, we denote the set of halfspaces by $\mscr{H}$. In principle, one could try to define a compactification as we did above, namely by taking the closure of the image of $X\hookrightarrow 2^{\mscr{H}}$. However, if $X$ is not discrete, this results in a space that is too large and carries little geometrical meaning: the double dual $X^{\o\o}$ \cite{Roller}. See Remark~\ref{bad ultrafilters on R} and Example~\ref{double dual of N} for the pathologies that may arise; here we simply remark that the inclusion $X\hookrightarrow X^{\o\o}$ needs not be continuous.

Instead, given $x,y\in X$, we denote by $I(x,y)$ the union of all medians $m(x,y,z)$ with $z\in X$; we refer to $I(x,y)$ as the \emph{interval} between $x$ and $y$. In many interesting cases, intervals are compact. 
We define the Roller compactification $\overline X$ as the closure of the image of the map:
\begin{align*}
X & \hookrightarrow \prod_{x,y\in X} I(x,y) \\
z & \mapsto (m(x,y,z))_{x,y},
\end{align*}
and we set $\partial X:=\overline X\setminus X$. For ${\rm CAT}(0)$ cube complexes, this provides a new characterisation of the customary Roller boundary. A similar construction was considered in \cite{Ward} for dendrites.
\item As in cube complexes, we could try to consider the set of ultrafilters on $\mscr{H}$; this would however result again in the double dual $X^{\o\o}$. Instead, we endow $\mscr{H}$ with a $\s$-algebra of subsets and a measure; we need a finer $\s$-algebra than the one in \cite{CDH}, see Section~\ref{measure theory}. We then only consider measurable ultrafilters, identifying sets with null symmetric difference. This is an alternative description of the Roller compactification $\overline X$ (see Theorem~\ref{to highlight}), although no natural topology arises via this construction. 
\item Given a general median algebra $M$, the \emph{zero-completion} of $M$ was introduced in \cite{Bandelt-Meletiou}. We will recover the same object from a more geometrical perspective in Section~\ref{the zero-completion}. When $M$ is the median algebra arising from the median space $X$, the zero-completion of $M$ is identified with the Roller compactification $\overline X$. This shows that $\overline X$ has itself a natural structure of median algebra.
\item Finally, $\overline X$ can be naturally identified with the horofunction compactification of $X$. For $0$-skeleta of ${\rm CAT}(0)$ cube complexes, this is an unpublished result of U.\ Bader and D.\ Guralnik; also see \cite{Caprace-Lecureux,Fernos-Lecureux-Matheus}.
\end{enumerate}

Each of the four definitions is better suited to the study of a particular aspect of $\overline X$. Their interplay yields:

\begin{thmintro}\label{C}
Let $X$ be a complete, locally convex median space with compact intervals. The Roller compactification $\overline X$ is a compact, locally convex, topological median algebra. The inclusion $X\hookrightarrow\overline X$ is a continuous morphism of median algebras, whose image is convex and dense. It is an embedding if, in addition, $X$ is connected and locally compact.
\end{thmintro}

The class of locally convex median spaces with compact intervals encompasses both complete, finite rank median spaces and (possibly infinite dimensional) ${\rm CAT}(0)$ cube complexes. For the latter, we recover the usual Roller compactification. We also remark that every complete, connected, locally compact median space is proper\footnote{See Lemma~4.6 in \cite{Bow4} and Proposition~I.3.7 in \cite{BH}.} and thus has compact intervals; examples of such spaces that are locally convex and infinite-rank appear e.g.\ in \cite{Chatterji-Drutu}. 

As for ${\rm CAT}(0)$ cube complexes, Roller compactifications of median spaces are endowed with an extended metric $d\colon\overline X\x\overline X\ra [0,+\infty]$. Thus, they are partitioned into \emph{components}, namely maximal subsets of points, any two of which are at finite distance. The space $X$ always forms an entire component; moreover:

\begin{thmintro}\label{D}
Let $X$ be complete and finite rank. Every component of $\partial X$ is a complete median space of strictly lower rank.
\end{thmintro}

In subsequent work, Theorem~\ref{D} will allow us to prove a number of results by induction on the rank. In \cite{Fioravanti2}, we use Roller boundaries to extend to finite rank median spaces the machinery developed in \cite{CS} and part of Hagen's theory of unidirectional boundary sets (UBS) \cite{Hagen}. As a consequence, we obtain in \cite{Fioravanti2} a version of the Tits alternative for groups acting freely on finite rank median spaces. 

In \cite{Fioravanti3}, we generalise to finite rank median spaces the superrigidity result of \cite{CFI}. As a consequence, if $\G$ is an irreducible lattice in a higher rank semisimple Lie group, every action of $\G$ on a complete, connected, finite rank median space must fix a point. 

This is in sharp contrast to the behaviour of actions on infinite rank median spaces. Indeed, it was shown in \cite{Chatterji-Drutu} that, for $k,n\geq 2$, all uniform lattices in $(PSL_k\R)^n$ admit proper, cocompact\footnote{To be precise, \cite{Chatterji-Drutu} only proves that the action is \emph{cobounded}, but it is possible to show that the target median space is proper. The authors have informed me that this stronger result (implying cocompactness) will appear in the next version of their preprint.} actions on complete, connected, infinite-rank median spaces. Note that this phenomenon is specific to non-discrete median spaces, as every cocompact cube complex is finite dimensional.

{\bf Structure of the paper.} In Section~\ref{main characters} we give definitions and basic results. We study convexity, intervals and halfspaces; we prove Proposition~\ref{B}. In Section~\ref{measure theory}, we construct a $\s$-algebra and a measure on the set of halfspaces of a median space; we also prove Theorem~\ref{A}. In Section~\ref{the zero-completion} we study zero-completions of median algebras; our perspective is different from the one in \cite{Bandelt-Meletiou}, but we show that our notions are equivalent. Section~\ref{definition and first properties} is devoted to Roller compactifications of median spaces; we prove Theorem~\ref{C} there. Finally, we analyse components in Section~\ref{cc's} and prove Theorem~\ref{D}.

{\bf Acknowledgements.} The author warmly thanks Brian Bowditch, Pier\-re-Emmanuel Caprace, Indira Chatterji, Yves Cornulier, Talia Fern\'os and Mark Hagen for many helpful conversations and Anthony Genevois for his comments on an earlier version. The author expresses special gratitude to Cornelia Dru\c tu for her excellent supervision and to Talia Fern\'os for her encouragement to pursue this project. We also thank the anonymous referee for many valuable comments.

This work was undertaken at the Mathematical Sciences Research Institute in Berkeley during the Fall 2016 program in Geometric Group Theory, where the author was supported by the National Science Foundation under Grant no.\ DMS-1440140 and by the GEAR Network. Part of this work was also carried out at the Isaac Newton Institute for Mathematical Sciences, Cambridge, during the programme ``Non-positive curvature, group actions and cohomology'' and was supported by EPSRC grant no.\ EP/K032208/1. The author was also supported by the Clarendon Fund and the Moussouris Scholarship of Merton College.

\section{Preliminaries.}\label{main characters}

\subsection{Median algebras and median spaces.}\label{median algebras}

For definitions and various results on median algebras from a geometric perspective, we refer the reader to \cite{CDH, Bow1, Bow4, Roller}. In the following discussion we consider a median algebra $M$ with median map $m$.

Given $x,y\in M$ the \emph{interval} $I(x,y)$ is the set of points $z\in M$ satisfying $m(x,y,z)=z$. A finite or infinite sequence of points $x_k\in M$ is a \emph{discrete geodesic} if $x_k\in I(x_m,x_n)$ whenever $m<k<n$. A subset $C\cu M$ is said to be \emph{convex} if $m(x,y,z)\in C$ whenever $x,y\in C$ and $z\in M$; equivalently $I(x,y)\cu C$ for all $x,y\in C$. Any collection of pairwise-intersecting convex subsets has the finite intersection property; this fact is usually known as Helly's Theorem, see e.g.\ Theorem~2.2 in \cite{Roller}.

A \emph{convex halfspace} is a nonempty convex subset $\mf{h}\cu M$ whose complement $\mf{h}^*:=M\setminus\mf{h}$ is also convex and nonempty. A \emph{convex wall} is an unordered pair $\mf{w}:=\{\mf{h}, \mf{h}^*\}$, where $\mf{h}$ is a convex halfspace; we will generally simply speak of \emph{halfspaces} and \emph{walls}, unless we need to avoid confusion with the notions in Section~\ref{SMW}. The wall $\mf{w}$ \emph{separates} subsets $A,B$ of $M$ if $A\cu\mf{h}$ and $B\cu\mf{h}^*$ or vice versa. Any two disjoint convex subsets can be separated by a wall, see e.g.\ Theorem~2.7 in \cite{Roller}.

We will denote the collections of all walls and all halfspaces of $M$ by $\mscr{W}(M)$ and $\mscr{H}(M)$, respectively, or simply by $\mscr{W}$ and $\mscr{H}$ when the context is clear; there is a natural two-to-one projection $\pi\colon\mscr{H}\ra\mscr{W}$. Given subsets $A,B\cu M$, we write $\mscr{W}(A|B)$ for the set of walls separating them and set
\[\mscr{H}(A|B):=\{\mf{h}\in\mscr{H}\mid B\cu\mf{h}, \ A\cu\mf{h}^*\}.\]
We will simply write $\s_A$ for $\mscr{H}(\emptyset|A)$ and confuse the singleton $\{x\}$ with the point $x$. We will refer to sets of the form $\mscr{W}(x|y)$ and $\mscr{H}(x|y)$ as \emph{wall-intervals} and \emph{halfspace-intervals}, respectively.

A \emph{pocset} $(\mc{P},\preceq,*)$ consists of a poset $(\mc{P},\preceq)$ equipped with an order-reversing involution $*$, such that every element $a\in\mc{P}$ is incomparable with $a^*$. Elements $a,b$ of a pocset are \emph{transverse} if any two elements of the set $\{a,a^*,b,b^*\}$ are incomparable. Considering the median algebra $M$, the triple $(\mscr{H},\cu,*)$ is a pocset. Halfspaces $\mf{h},\mf{k}$ are transverse if and only if all the intersections $\mf{h}\cap\mf{k}$, $\mf{h}\cap\mf{k}^*$, $\mf{h}^*\cap\mf{k}$, $\mf{h}^*\cap\mf{k}^*$ are nonempty. We say that two walls are transverse if they correspond to transverse halfspaces. 

A \emph{partial filter} is a subset $\s\cu\mc{P}$ such that there do not exist $a,b\in\s$ with $a\preceq b^*$. If moreover for every $a\in\mc{P}$ we either have $a\in\s$ or $a^*\in\s$, we say that $\s$ is an \emph{ultrafilter}. A \emph{filter} is a partial filter $\s$ such that $b\in\s$ whenever $a\preceq b$ and $a\in\s$. Some care is needed when comparing our terminology to that of \cite{CFI}, as their notion of ``partially defined ultrafilter'' coincides with our notion of ``filter''.

Every partial filter $\s\cu\mc{P}$ is contained in a filter; the smallest such filter is the set of all $b\in\mc{P}$ such that $a\preceq b$ for some $a\in\s$. Every filter is contained in an ultrafilter; in fact, ultrafilters are precisely filters that are maximal under inclusion. Given any subset $A\cu M$, the set $\s_A\cu\mscr{H}$ is a filter; it is an ultrafilter if and only if $A$ consists of a single point.

A subset $\s\cu\mc{P}$ is said to be \emph{inseparable} if, whenever $a\preceq b\preceq c$ and $a,c\in\s$, we also have $b\in\s$. Given a subset $\s\cu\mscr{P}$, its \emph{inseparable closure} is the smallest inseparable subset of $\mc{P}$ containing $\s$. It consists precisely of all those $b\in\mc{P}$ such that there exist $a,c\in\s$ with $a\preceq b\preceq c$. Note that the inseparable closure of a partial filter is again a partial filter; all filters are inseparable.

The set $\{-1,1\}$ has a unique structure of median algebra. If ${k\in\N}$, a \hbox{\emph{$k$-hypercube}} is the median algebra $\{-1,1\}^k$, given by considering the median map of $\{-1,1\}$ separately in all coordinates. The \emph{rank} of the median algebra $M$ is the maximal $k\in\N$ such that we can embed a $k$-hypercube into $M$; if $M$ has at least two points, we have rank$(M)\in [1,+\infty]$. The rank of $M$ coincides with the maximal cardinality of a set of pairwise-transverse halfspaces, see Proposition~6.2 in \cite{Bow1}.

Given a subset $C\cu M$ and $x\in M$, $y\in C$, we say that $y$ is a \emph{gate} for $(x,C)$ if $y\in I(x,z)$ for every $z\in C$. We say that $C\cu M$ is \emph{gate-convex} if a gate for $(x,C)$ exists for every $x\in M$. If $C$ is gate-convex, there is a unique gate for $(x,C)$ for every $x\in M$; thus we can define a \emph{gate-projection} $\pi_C\colon M\ra C$. Gate-convex subsets are always convex, but the converse is not true in general; see Lemma~\ref{gate-convexity vs compact intervals} for an obstruction. The interval $I(x,y)$ is always gate-convex with gate-projection given by $\pi(z)=m(x,y,z)$; on the other hand, if $C\cu I(x,y)$ is gate-convex, we have $C=I(\pi_C(x),\pi_C(y))$.

\begin{prop}\label{retractions new}
A map $\phi\colon M\ra M$ is a gate-projection to its image if and only if, for all $x,y,z\in M$, we have $\phi(m(x,y,z))=m(\phi(x),\phi(y),z)$. In this case, we also have $\phi\left(m(x,y,z)\right)=m\left(\phi(x),\phi(y),\phi(z)\right)$; in particular, gate-projections map intervals to intervals.
\end{prop}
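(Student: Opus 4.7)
The plan is to prove the equivalence via a halfspace argument in the forward direction and a direct substitution in the converse, then upgrade to the median-morphism identity algebraically and read off the interval statement.

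\textbf{Forward direction.} Suppose $\phi$ is the gate-projection to $C := \phi(M)$. Set $w := m(x,y,z)$ and $a := m(\phi(x), \phi(y), z) \in I(\phi(x), \phi(y)) \subseteq C$. It suffices to show that $a$ is the gate of $w$ in $C$, for then $a = \phi(w)$ by uniqueness. Fix $u \in C$; I will check $a \in I(w,u)$ via halfspaces. By the $\mf{h} \leftrightarrow \mf{h}^*$ symmetry, it is enough to show that every halfspace $\mf{h}$ with $w, u \in \mf{h}$ also contains $a$. Since $w = m(x,y,z) \in \mf{h}$, at least two of $x, y, z$ lie in $\mf{h}$; and whenever $x \in \mf{h}$, the convex halfspace $\mf{h}$ contains $I(x, u)$ and hence $\phi(x)$ (by the gate property, since $u \in C$), and likewise for $y$. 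A short case split on which two of $x, y, z$ lie in $\mf{h}$ shows that at least two of $\phi(x), \phi(y), z$ lie in $\mf{h}$, whence $a \in \mf{h}$.

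\textbf{Reverse direction.} Conversely, assume the identity holds for all $x, y, z$. Substituting $y = z$ and using $m(x,z,z) = z$ gives $\phi(z) = m(\phi(x), \phi(z), z)$, i.e., $\phi(z) \in I(c, z)$ for every $c \in C := \phi(M)$. Specialising to $z = c$ forces $\phi(c) \in I(c,c) = \{c\}$, so $\phi$ fixes $C$ pointwise and is idempotent. The relation $\phi(z) \in I(c, z)$ for all $c \in C$ then says exactly that $\phi(z)$ is a gate for $(z, C)$, so $C$ is gate-convex and $\phi = \pi_C$.

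\textbf{Morphism property and intervals.} To obtain $\phi(m(x,y,z)) = m(\phi(x), \phi(y), \phi(z))$, combine the main identity with the symmetry of $m$: applying the identity to $m(z, x, y)$ yields $\phi(m(x,y,z)) = m(\phi(z), \phi(x), y) = m(y, \phi(z), \phi(x))$. This element already lies in $C$, so by idempotency and one final application of the identity (now with $\phi(x)$ sitting in the ``unchanged'' slot), $\phi(m(x,y,z)) = \phi(m(y, \phi(z), \phi(x))) = m(\phi(y), \phi(z), \phi(x))$. For intervals, if $z \in I(x,y)$ the main identity gives $\phi(z) = m(\phi(x), \phi(y), z) \in I(\phi(x), \phi(y))$; conversely, for $c \in I(\phi(x), \phi(y))$ the point $m(x, y, c) \in I(x,y)$ projects to $c$. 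Hence $\phi(I(x,y)) = I(\phi(x), \phi(y))$.

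The main delicate point is the case split in the forward direction: it is elementary but requires carefully exploiting the ``two-out-of-three'' halfspace description of the median $m(x,y,z) \in \mf{h}$ together with the gate property to push $\phi(x)$ or $\phi(y)$ into $\mf{h}$ whenever $x$ or $y$ is already there alongside $u \in C$.
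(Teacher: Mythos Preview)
Your proof is correct and complete. The paper itself does not give an argument here: it simply cites Proposition~5.1 of Bandelt--Hedl\'{\i}kov\'{a} and 5.8 of Isbell. So your write-up is a genuine improvement in self-containedness rather than a recapitulation of the paper's reasoning.

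A brief comparison of approaches: the cited sources (and the usual treatments in the median-algebra literature) tend to verify the forward direction purely by median-algebra identities, without invoking halfspaces. Your route instead leans on the convex-wall separation theorem (quoted earlier in the paper from Roller) to reduce ``$a\in I(w,u)$'' to a two-out-of-three check in each halfspace; this is equally valid in an arbitrary median algebra and arguably more transparent, at the cost of importing a nontrivial structural result (separation) that the identity-based proofs avoid. Your reverse direction and the derivation of the full median-morphism identity and the interval statement are the standard ones and match what the cited references do.
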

\begin{proof}
See Proposition~5.1 in \cite{Bandelt-Hedlikova} and 5.8 in \cite{Isbell}. Note that ``retract'' and ``\v Ceby\v sev ideals'' are alternative terminology for ``gate-convex subset''; Isbell works in the more general context of isotropic media. 
\end{proof}

\begin{lem}\label{gates vs inclusions}
\begin{enumerate}
\item If $C_1\cu M$ is convex and $C_2\cu M$ is gate-convex, the projection $\pi_{C_2}(C_1)$ is convex. If moreover $C_1\cap C_2\neq\emptyset$, we have $\pi_{C_2}(C_1)=C_1\cap C_2$.
\item If $C_1,C_2\cu M$ are gate-convex, the sets $\pi_{C_1}(C_2)$ and $\pi_{C_2}(C_1)$ are gate-convex with gate-projections $\pi_{C_1}\o\pi_{C_2}$ and $\pi_{C_2}\o\pi_{C_1}$, respectively. 
\item If $C_1,C_2\cu M$ are gate-convex and $C_1\cap C_2\neq\emptyset$, then $C_1\cap C_2$ is gate-convex with gate-projection $\pi_{C_1}\o\pi_{C_2}=\pi_{C_2}\o\pi_{C_1}$. In particular, if $C_2\cu C_1$, we have $\pi_{C_2}=\pi_{C_2}\o\pi_{C_1}$.
\item If $C_1,C_2\cu M$ are gate-convex, we have $\pi_{C_1}\o\pi_{C_2}\o\pi_{C_1}=\pi_{C_1}\o\pi_{C_2}$.
\end{enumerate}
\end{lem}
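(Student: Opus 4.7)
The proof will be essentially a bookkeeping exercise built on top of Proposition~\ref{retractions new}: the key observation is that the composition of two gate-projections again satisfies the algebraic identity characterising gate-projections, so one never has to construct a gate directly.

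\textbf{Plan for (1).} For convexity of $\pi_{C_2}(C_1)$, I pick $a=\pi_{C_2}(x_1)$, $b=\pi_{C_2}(x_2)$ with $x_i\in C_1$ and an arbitrary $z\in M$. Since $\pi_{C_2}$ is a gate-projection, Proposition~\ref{retractions new} gives $\pi_{C_2}(m(x_1,x_2,z))=m(a,b,z)$, and $m(x_1,x_2,z)\in C_1$ by convexity of $C_1$, so $m(a,b,z)\in\pi_{C_2}(C_1)$. For the equality in the case $C_1\cap C_2\neq\emptyset$, pick $y\in C_1\cap C_2$; for every $x\in C_1$, the gate $\pi_{C_2}(x)$ lies in $I(x,y)\subseteq C_1$ (convexity of $C_1$) and also in $C_2$, giving $\pi_{C_2}(C_1)\subseteq C_1\cap C_2$; the reverse inclusion is obvious since $\pi_{C_2}$ fixes $C_2$.

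\textbf{Plan for (2).} I will show that $\phi:=\pi_{C_1}\o\pi_{C_2}$ satisfies the identity in Proposition~\ref{retractions new}: applying the identity first to $\pi_{C_2}$ and then to $\pi_{C_1}$,
\[\phi(m(x,y,z))=\pi_{C_1}\bigl(m(\pi_{C_2}(x),\pi_{C_2}(y),z)\bigr)=m\bigl(\phi(x),\phi(y),z\bigr).\]
Thus $\phi$ is a gate-projection onto its image, which is $\pi_{C_1}(C_2)$; in particular $\pi_{C_1}(C_2)$ is gate-convex. The symmetric statement for $\pi_{C_2}\o\pi_{C_1}$ is identical.

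\textbf{Plan for (3) and (4).} Assuming $C_1\cap C_2\neq\emptyset$, part (1) applied to the gate-convex (hence convex) set $C_2$ gives $\pi_{C_1}(C_2)=C_1\cap C_2$, and symmetrically $\pi_{C_2}(C_1)=C_1\cap C_2$. Combined with part (2), this shows $C_1\cap C_2$ is gate-convex and that both $\pi_{C_1}\o\pi_{C_2}$ and $\pi_{C_2}\o\pi_{C_1}$ are gate-projections onto it; uniqueness of gate-projections (noted before Proposition~\ref{retractions new}) forces them to coincide with $\pi_{C_1\cap C_2}$. The ``in particular'' follows by taking $C_2\subseteq C_1$, so that $C_1\cap C_2=C_2$. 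For part (4), I apply this last observation to the inclusion $\pi_{C_1}(C_2)\subseteq C_1$: by part (2), $\pi_{\pi_{C_1}(C_2)}=\pi_{C_1}\o\pi_{C_2}$, and the ``in particular'' gives $\pi_{\pi_{C_1}(C_2)}=\pi_{\pi_{C_1}(C_2)}\o\pi_{C_1}$, which rewritten is exactly the desired identity $\pi_{C_1}\o\pi_{C_2}\o\pi_{C_1}=\pi_{C_1}\o\pi_{C_2}$.

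The one place where one has to be a little careful is the argument in (1) that the gate of a point of $C_1$ actually lands in $C_1$; convexity (not just gate-convexity) of $C_1$ is essential there, since we use $I(x,y)\subseteq C_1$. Everything else is formal manipulation of the algebraic identity in Proposition~\ref{retractions new}, so I do not anticipate any genuine obstacle.
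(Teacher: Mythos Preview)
Your argument is correct and follows essentially the same route as the paper: parts~(2)--(4) are derived exactly as the paper indicates, via Proposition~\ref{retractions new} and the inclusion $\pi_{C_1}(C_2)\subseteq C_1$. The only difference is that for part~(1) the paper outsources the proof to \cite{Isbell} and \cite{Fioravanti0}, whereas you supply a short self-contained argument; your argument is valid.
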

\begin{proof}
For part~1, see 1.8 and the corollary to 2.5 in \cite{Isbell}. Part~2 follows from Proposition~\ref{retractions new} above and part~3 is an immediate consequence. Part~4 follows from part~3 and the observation that $\pi_{C_1}\o\pi_{C_2}$ is the gate-projection to $\pi_{C_1}(C_2)\cu C_1$.
\end{proof}

Each convex subset $C\cu M$ is also a median subalgebra. In particular, we can consider the collection $\mscr{H}(C)$ of all halfspaces of the median algebra $C$.

\begin{prop}\label{walls in convex}
If $C\cu M$ is gate-convex, there is a one-to-one correspondence
\begin{align*}
\{ \mf{h}\in\mscr{H}(M) \mid \mf{h}\cap C\neq\emptyset, \ \mf{h}^*\cap C\neq\emptyset\} \longleftrightarrow & \mscr{H}(C) \\
\mf{h} \longmapsto & \mf{h}\cap C \\
\pi_C^{-1}(\mf{k}) \longmapsfrom & \mf{k} .
\end{align*}
Moreover, if $\mf{h}\cap C,\mf{k}\cap C\in\mscr{H}(C)$, then $\mf{h}\cu\mf{k}$ if and only if $\mf{h}\cap C\cu\mf{k}\cap C$.
\end{prop}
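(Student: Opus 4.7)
The plan is a four-step verification: (i) both maps land in the right set, (ii) one composition is the identity essentially for free, (iii) the other composition is the identity thanks to a short key lemma, (iv) the inclusion statement then follows formally.

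First, I would check that the backward map is well-defined. By Proposition~\ref{retractions new}, the gate-projection $\pi_C$ is a morphism of median algebras, hence the preimage of a convex set is convex. Consequently $\pi_C^{-1}(\mf{k})$ and $\pi_C^{-1}(\mf{k}^*)$ are complementary nonempty convex subsets of $M$, so $\pi_C^{-1}(\mf{k})\in\mscr{H}(M)$; it clearly meets both $C$ and its complement since $\pi_C$ fixes $C$. For the forward map, if $\mf{h}\cap C$ and $\mf{h}^*\cap C$ are both nonempty, they form a partition of $C$ into two convex subsets, so $\mf{h}\cap C\in\mscr{H}(C)$.

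The composition $\mf{k}\mapsto\pi_C^{-1}(\mf{k})\cap C$ is trivially the identity on $\mscr{H}(C)$, because $\pi_C$ restricts to the identity on $C$. The substantive step is the reverse composition, for which I would first establish the key lemma: \emph{if $\mf{h}\in\mscr{H}(M)$ meets $C$, then $\pi_C(\mf{h})\cu\mf{h}$.} This is a one-liner: for $x\in\mf{h}$ and any fixed $y\in\mf{h}\cap C$, the gate property gives $\pi_C(x)\in I(x,y)$, and $I(x,y)\cu\mf{h}$ by convexity of $\mf{h}$. The lemma immediately yields $\mf{h}\cu\pi_C^{-1}(\mf{h}\cap C)$; applying it to $\mf{h}^*$ --- which also meets $C$ by hypothesis --- gives the reverse inclusion by contrapositive, so $\mf{h}=\pi_C^{-1}(\mf{h}\cap C)$.

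For the last assertion, the implication $\mf{h}\cu\mf{k}\Ra\mf{h}\cap C\cu\mf{k}\cap C$ is obvious, and the reverse follows by applying $\pi_C^{-1}$ and using the identification from the previous paragraph. I do not foresee any real obstacle; the only mildly delicate point is noticing that the hypothesis that both $\mf{h}\cap C$ and $\mf{h}^*\cap C$ be nonempty is exactly what allows the key lemma to be invoked symmetrically on $\mf{h}$ and $\mf{h}^*$.
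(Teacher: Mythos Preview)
Your proof is correct and follows essentially the same route as the paper's. The only cosmetic difference is that where you prove your ``key lemma'' $\pi_C(\mf{h})\cu\mf{h}$ directly from the gate property, the paper invokes part~1 of Lemma~\ref{gates vs inclusions} (which says $\pi_C(\mf{h})=\mf{h}\cap C$ when $\mf{h}\cap C\neq\emptyset$); the paper also leaves the ``Moreover'' clause implicit, whereas you spell it out.
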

\begin{proof}
If $\mf{h}\in\mscr{H}(M)$ and $\mf{h}\cap C$, $\mf{h}^*\cap C$ are both nonempty, they are halfspaces of $C$. All halfspaces of $C$ arise this way: given a partition $C=C_1\sqcup C_2$ where $C_i$ are both convex, we obtain a partition $M=\pi_C^{-1}\left(C_1\right)\sqcup\pi_C^{-1}\left(C_2\right)$ and Proposition~\ref{retractions new} ensures that the $\pi_C^{-1}\left(C_i\right)$ are also convex. Finally, part~1 of Lemma~\ref{gates vs inclusions} implies that, if $\mf{h}\cap C\in\mscr{H}(C)$, then $\pi_C(\mf{h})\cu\mf{h}\cap C$; equivalently, $\mf{h}\cu\pi_C^{-1}(\mf{h}\cap C)$. Similarly $\mf{h}^*\cu\pi_C^{-1}(\mf{h}^*\cap C)$, thus $\mf{h}=\pi_C^{-1}(\mf{h}\cap C)$.
\end{proof}

In particular, for all $x,y\in M$, we can identify $\mscr{W}(x|y)\simeq\mscr{W}\left(I(x,y)\right)$ canonically. We will also (slightly improperly) consider the sets $\mscr{H}(C)$ as subsets of $\mscr{H}(M)$ from now on. Given subsets $C_1,C_2\cu M$, we say that $(x_1,x_2)$ is a \emph{pair of gates} for $(C_1,C_2)$ if $x_1$ is a gate for $(x_2,C_1)$ and $x_2$ is a gate for $(x_1,C_2)$.

\begin{lem}\label{gates for pairs of convex sets 1}
If $C_1,C_2\cu M$ are gate-convex, for every $y_1\in C_1$ and ${y_2\in C_2}$ there exists a pair of gates $(x_1,x_2)$ for $(C_1,C_2)$ such that $y_1x_1x_2y_2$ is a discrete geodesic. Moreover, $\mscr{H}(C_1|C_2)=\mscr{H}(x_1|x_2)$. 
\end{lem}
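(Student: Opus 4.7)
The plan is to construct the pair of gates explicitly from the given $y_1, y_2$ and then read off both the geodesic statement and the halfspace equality from the defining properties of gate-projections. Specifically, I would set
\[x_1:=\pi_{C_1}(y_2), \qquad x_2:=\pi_{C_2}(x_1).\]
By construction $\pi_{C_2}(x_1)=x_2$, so half of the pair-of-gates condition is automatic. For the other half, I need $\pi_{C_1}(x_2)=x_1$, which I will obtain from part~4 of Lemma~\ref{gates vs inclusions}:
\[\pi_{C_1}(x_2)=\pi_{C_1}\bigl(\pi_{C_2}(\pi_{C_1}(y_2))\bigr)=\pi_{C_1}(\pi_{C_2}(y_2))=\pi_{C_1}(y_2)=x_1,\]
using that $\pi_{C_2}(y_2)=y_2$ since $y_2\in C_2$.

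Next I would verify that $y_1x_1x_2y_2$ is a geodesic. The identity $x_1=\pi_{C_1}(y_2)$ combined with $y_1\in C_1$ gives $x_1\in I(y_1,y_2)$; similarly $x_2=\pi_{C_2}(x_1)$ together with $y_2\in C_2$ gives $x_2\in I(x_1,y_2)$. The standard fact that intervals are nested (namely $I(b,c)\cu I(a,c)$ whenever $b\in I(a,c)$, itself a one-line application of the distributive axiom for $m$) then yields $x_2\in I(y_1,y_2)$. The remaining requirement $x_1\in I(y_1,x_2)$ reduces to the gate property $\pi_{C_1}(x_2)=x_1$ applied to $y_1\in C_1$.

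For the halfspace equality, the inclusion $\mscr{H}(C_1|C_2)\cu\mscr{H}(x_1|x_2)$ is immediate from $x_1\in C_1$ and $x_2\in C_2$. Conversely, given $\mf{h}\in\mscr{H}(x_1|x_2)$ (so $x_1\in\mf{h}^*$ and $x_2\in\mf{h}$) and any $z\in C_1$, the identity $m(x_2,z,x_1)=x_1$ coming from the gate property, combined with the convexity of $\mf{h}$, will force $z\in\mf{h}^*$: otherwise $x_2,z\in\mf{h}$ would imply $m(x_2,z,x_1)=x_1\in\mf{h}$, contradicting $x_1\in\mf{h}^*$. A symmetric argument using $x_2=m(x_1,z,x_2)$ for $z\in C_2$ shows $C_2\cu\mf{h}$, giving $\mf{h}\in\mscr{H}(C_1|C_2)$.

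I do not anticipate any serious obstacle. The only mild subtlety is the asymmetric-looking definition of $x_1$ and $x_2$: it is not \emph{a priori} obvious that projecting $y_2$ to $C_1$ and then back to $C_2$ produces a point whose further projection to $C_1$ returns to $x_1$. This is exactly what the retraction identity in part~4 of Lemma~\ref{gates vs inclusions} guarantees, and everything else falls out of the standard gate-projection calculus.
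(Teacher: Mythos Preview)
Your proof is correct and essentially the same as the paper's, just with the roles of the two indices swapped: the paper sets $x_2:=\pi_{C_2}(y_1)$ and $x_1:=\pi_{C_1}(x_2)$, then invokes part~2 of Lemma~\ref{gates vs inclusions} (idempotence of $\pi_{C_2}\circ\pi_{C_1}$ on $\pi_{C_2}(C_1)\ni x_2$) where you invoke part~4. Your write-up is more explicit on both the geodesic verification and the halfspace equality, which the paper leaves as ``follows from the gate property'' and ``one can easily check''.
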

\begin{proof}
Set $x_2:=\pi_{C_2}(y_1)$ and $x_1:=\pi_{C_1}(x_2)$; by part~2 of Lemma~\ref{gates vs inclusions}, we have $\pi_{C_2}(x_1)=\pi_{C_2}\pi_{C_1}(x_2)=x_2$. Hence $(x_1,x_2)$ is a pair of gates and the fact that $y_1x_1x_2y_2$ is a discrete geodesic follows from the gate property. Another consequence of $x_1$ and $x_2$ being gates is that the sets $\mscr{H}(x_1|x_2)$, $\mscr{H}(x_1|C_2)$ and $\mscr{H}(C_1|x_2)$ coincide; this yields the last part of the lemma.
\end{proof}

\begin{lem}\label{rank with a subset}
Let $\mscr{K}\cu\mscr{H}$ be a subset with $\mscr{K}\cap\mscr{H}(x|y)\neq\emptyset$ for every $x,y\in M$. The rank of $M$ coincides with the maximal cardinality of a set of pairwise-transverse halfspaces in $\mscr{K}$.
\end{lem}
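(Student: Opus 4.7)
The plan is to establish two inequalities. The trivial direction, ``max pairwise-transverse in $\mscr{K}\leq\mathrm{rank}(M)$'', follows immediately from $\mscr{K}\cu\mscr{H}$ and the halfspace characterisation of rank cited just before the lemma. For the nontrivial direction, I assume $M$ contains $n$ pairwise-transverse halfspaces $\mf{h}_1,\ldots,\mf{h}_n\in\mscr{H}$ and plan to produce $n$ pairwise-transverse halfspaces inside $\mscr{K}$.

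The first step will be to invoke Proposition~6.2 of~\cite{Bow1} in the form: $n$ pairwise-transverse halfspaces of $M$ give rise to an injective median-algebra morphism $\iota\colon\{-1,+1\}^n\hookrightarrow M$ (this is the content of the equivalence between the two characterisations of rank, obtained concretely by picking $p_\epsilon\in\bigcap_i\mf{h}_i^{\epsilon_i}$ via Helly and then iterating medians to enforce the hypercube identities). Write $q_\epsilon:=\iota(\epsilon)$, so that $m(q_\epsilon,q_{\epsilon'},q_{\epsilon''})=q_{\mathrm{maj}(\epsilon,\epsilon',\epsilon'')}$ with majority taken coordinate-wise. Let $o:=(-1,\ldots,-1)$, let $e_i$ be the tuple differing from $o$ only at position $i$, and let $e_{ij}$ be the tuple differing from $o$ precisely at positions $i$ and $j$. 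For each $i$, I apply the hypothesis to the admissible pair $(q_o,q_{e_i})$ (admissible since $\iota$ is injective) and pick $\mf{k}_i\in\mscr{K}\cap\mscr{H}(q_o|q_{e_i})$; by definition $q_o\in\mf{k}_i^*$ and $q_{e_i}\in\mf{k}_i$.

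The key check will be pairwise transversality of $\mf{k}_1,\ldots,\mf{k}_n$. My tool is the standard majority property of halfspaces: since both $\mf{k}$ and $\mf{k}^*$ are convex, the median $m(x,y,z)$ lies on whichever side of $\mf{k}$ contains at least two of $x,y,z$. Fix $i\neq j$. The hypercube identity $m(q_o,q_{e_i},q_{e_j})=q_o\in\mf{k}_i^*$ together with $q_{e_i}\in\mf{k}_i$ will force $q_{e_j}\in\mf{k}_i^*$; similarly, $m(q_{e_i},q_{e_{ij}},q_o)=q_{e_i}\in\mf{k}_i$ together with $q_o\in\mf{k}_i^*$ will force $q_{e_{ij}}\in\mf{k}_i$. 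Exchanging the roles of $i$ and $j$ in these two arguments gives $q_{e_i}\in\mf{k}_j^*$ and $q_{e_{ij}}\in\mf{k}_j$. The four orthants $\mf{k}_i\cap\mf{k}_j$, $\mf{k}_i^*\cap\mf{k}_j$, $\mf{k}_i\cap\mf{k}_j^*$, $\mf{k}_i^*\cap\mf{k}_j^*$ will then be witnessed by $q_{e_{ij}}$, $q_{e_j}$, $q_{e_i}$, $q_o$ respectively, establishing transversality.

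The only genuinely structural input is the hypercube embedding $\iota$, which I pull in from~\cite{Bow1}; once that is available the rest is a few lines of median-majority bookkeeping, and I do not expect further obstacles.
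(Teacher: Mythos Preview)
Your argument is correct, but it takes a different route from the paper's. The paper proceeds by an inductive replacement: given pairwise-transverse $\mf{h}_1,\ldots,\mf{h}_{k-1},\mf{h}\in\mscr{H}$, it replaces $\mf{h}$ alone by some $\mf{h}'\in\mscr{K}$ transverse to all the $\mf{h}_i$, and iterates. To do this it uses Helly's Theorem to pick $x,y\in\mf{h}^*$ and $u,v\in\mf{h}$ with $x,u\in\bigcap_i\mf{h}_i^*$ and $y,v\in\bigcap_i\mf{h}_i$, then applies Lemma~\ref{gates for pairs of convex sets 1} to the disjoint intervals $I(x,y)$ and $I(u,v)$ to obtain a pair of gates $(p,q)$ with $\mscr{H}(I(x,y)|I(u,v))=\mscr{H}(p|q)$; any $\mf{h}'\in\mscr{K}\cap\mscr{H}(p|q)$ then visibly hits all four orthants via $x,y,u,v$.

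Your approach instead front-loads the structural input by invoking the full hypercube embedding from \cite{Bow1}, then reads off all $n$ replacements at once from edges of the cube; the majority bookkeeping plays the role that the paper's gate-pair lemma plays. The paper's version stays closer to first principles (only Helly plus Lemma~\ref{gates for pairs of convex sets 1}, both proved in the text), while yours avoids the gate lemma and the induction at the price of importing the hypercube embedding explicitly. Either is perfectly acceptable.
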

\begin{proof}
It suffices to prove that, if $\mf{h}_1,...,\mf{h}_{k-1},\mf{h}\in\mscr{H}$ are pairwise transverse, there exists $\mf{h}'\in\mscr{K}$ such that $\mf{h}_1,...,\mf{h}_{k-1},\mf{h}'$ are pairwise transverse. Pick points $x\in\mf{h}_1^*\cap ... \cap\mf{h}_{k-1}^*\cap\mf{h}^*$, $y\in\mf{h}_1\cap ... \cap\mf{h}_{k-1}\cap\mf{h}^*$, $u\in\mf{h}_1^*\cap ... \cap\mf{h}_{k-1}^*\cap\mf{h}$ and $v\in\mf{h}_1\cap ... \cap\mf{h}_{k-1}\cap\mf{h}$; these exist by Helly's Theorem. The intervals $I:=I(x,y)$ and $J:=I(u,v)$ are disjoint since $I\cu\mf{h}^*$ and $J\cu\mf{h}$; thus there exists $\mf{h}'\in\mscr{H}(I|J)\cap\mscr{K}$, by Lemma~\ref{gates for pairs of convex sets 1}. It is immediate to check that $\mf{h}_1,...,\mf{h}_{k-1},\mf{h}'$ are pairwise transverse.
\end{proof}

A median algebra $M$ is a \emph{topological median algebra} if it is endowed with a Hausdorff topology so that the median map $m$ is continuous. We speak of a \emph{locally convex} median algebra if, in addition, every point has a basis of convex neighbourhoods. A topological median algebra is said to have \emph{compact intervals} if, for every $x,y\in M$, the interval $I(x,y)$ is compact. 

\begin{lem}\label{gate-convexity vs compact intervals}
Let $M$ be a topological median algebra $M$ with compact intervals. A convex subset $C\cu M$ is gate-convex if and only if it is closed.
\end{lem}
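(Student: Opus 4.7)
The plan is to produce a gate for each pair $(x, C)$, where $C \cu M$ is closed and convex and $x \in M$, by combining Helly's Theorem with a compactness argument. Fix $x \in M$ and choose any $z_0 \in C$; the interval $K := I(x, z_0)$ is compact by hypothesis. For every $z \in C$, set
\[ A_z := I(x, z) \cap C \cap K. \]
First I would observe that each $A_z$ is closed in $K$: the subset $C$ is closed by assumption, and since $M$ is Hausdorff and the median map $m$ is continuous, the interval $I(x, z) = \{w \in M \mid m(x, z, w) = w\}$ is the equaliser of two continuous maps and hence closed. Therefore each $A_z$ is a compact subset of $K$.

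Next I would verify that the family $\{A_z\}_{z \in C}$ enjoys the finite intersection property. Given $z_1, \ldots, z_n \in C$, I would apply Helly's Theorem to the convex sets $C, I(x, z_0), I(x, z_1), \ldots, I(x, z_n)$: they pairwise intersect, because each $I(x, z_i) \cap I(x, z_j)$ contains the median $m(x, z_i, z_j)$ (recall from Proposition~\ref{retractions new} that $w \mapsto m(x, z_i, w)$ is the gate-projection onto $I(x, z_i)$, so the image of this map lies in $I(x, z_i)$), and each $I(x, z_i) \cap C$ contains $z_i$. A common point, whose existence is guaranteed by Helly, lies in $A_{z_1} \cap \ldots \cap A_{z_n}$.

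Since $K$ is compact, compactness then forces $\bigcap_{z \in C} A_z$ to be non-empty, and any of its elements $y$ lies in $C$ and in $I(x, z)$ for every $z \in C$; that is, $y$ is a gate for $(x, C)$. The only mildly delicate step is invoking continuity of $m$ to conclude that $I(x, z)$ is closed, so that compactness of $K$ can be transferred to each $A_z$; the rest is a routine combination of Helly's Theorem with the finite intersection property of closed subsets of a compact space.
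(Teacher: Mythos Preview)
Your proof is correct and follows essentially the same approach as the paper: Helly's Theorem to obtain the finite intersection property for the family of sets $I(x,z)\cap C$, then compactness of intervals to produce a common point, which is the desired gate. The only cosmetic difference is that you explicitly fix a base interval $K=I(x,z_0)$ and intersect everything with it, whereas the paper simply notes that each $I(x,y)\cap C$ is already compact (being closed in the compact $I(x,y)$), so the finite intersection property directly yields a nonempty total intersection.
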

\begin{proof}
The fact that gate-convex subsets are closed holds in any topological median algebra. Indeed, if $C$ is gate-convex with projection $\pi$ and $x\not\in C$, the points $x$ and $\pi(x)$ are distinct. Setting $I:=I(x,\pi(x))$, part~1 of Lemma~\ref{gates vs inclusions} gives $\pi_I(C)=I\cap C=\{\pi(x)\}$. Since the median map $m$ is continuous by definition, the projection $\pi_I$ is also continuous and $\pi_I^{-1}(I\setminus\{\pi(x)\})$ is an open neighbourhood of $x$ disjoint from $C$. Hence $C$ is closed.

Now suppose that $M$ has compact intervals and that $C\cu M$ is closed and convex. Given $x\in M$, we consider the family $\mscr{G}:=\{I(x,y)\cap C\mid y\in C\}$; by Helly's Theorem, any two elements of $\mscr{G}$ intersect. Another application of Helly's Theorem shows that $\mscr{G}$ has the finite intersection property. By compactness, the intersection of all elements of $\mscr{G}$ is nonempty. Any point in this intersection is a gate for $(x,C)$; this proves that $C$ is gate-convex.
\end{proof}

\begin{lem}\label{compact median algebras}
Let $M$ be a compact topological median algebra.
\begin{enumerate}
\item Projections to gate-convex sets are continuous.
\item If $C_1,C_2\cu M$ are convex and compact, the convex hull of $C_1\cup C_2$ is compact. 
\end{enumerate}
\end{lem}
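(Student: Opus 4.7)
For part 1, the plan is a standard compactness--continuity argument. To show that a gate-convex set $C\cu M$ is closed, I would take a sequence $x_n\in C$ with $x_n\to x\in M$ and exploit the identity $m(x,x_n,\pi_C(x))=\pi_C(x)$ coming from the gate property; passing to the limit via continuity of $m$ gives $m(x,x,\pi_C(x))=\pi_C(x)$ and hence $x=\pi_C(x)\in C$. For continuity of $\pi_C$, I would use compactness of $M$ to reduce to subsequences: any limit $y$ of a subsequence of $\pi_C(x_n)$ lies in $C$ (now known to be closed) and inherits the identity $m(x,z,y)=y$ for every $z\in C$, so $y$ is a gate for $(x,C)$ and uniqueness forces $y=\pi_C(x)$.

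For part 2, the strategy is to realise the convex hull of $C_1\cup C_2$ as the continuous image of a compact set. First I would note that intervals in $M$ are closed, since $I(x,y)=\{z:m(x,y,z)=z\}$ is the preimage of the Hausdorff diagonal under a continuous map; hence they are compact, and Lemma~\ref{gate-convexity vs compact intervals} makes $C_1$ and $C_2$ gate-convex with gate-projections $\pi_1,\pi_2$. Next, the continuous map
\[ \psi\colon C_1\x C_2\x M\longrightarrow M,\qquad (c_1,c_2,z)\mapsto m(c_1,c_2,z), \]
has image exactly $J:=\bigcup_{c_1\in C_1,\,c_2\in C_2}I(c_1,c_2)$, so $J$ is compact.

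It remains to identify $J$ with the convex hull $H$ of $C_1\cup C_2$. The inclusion $J\cu H$ is immediate from convexity of $H$; the content of the lemma is the reverse inclusion, and I would deduce it from the pointwise claim $r\in I(\pi_1(r),\pi_2(r))$ for every $r\in H$. This is the main obstacle. To prove the claim, I would show that any halfspace $\mf{h}$ containing both $\pi_1(r)$ and $\pi_2(r)$ must contain $r$: if $C_1\cup C_2\cu\mf{h}$ then the convex hull $H$ itself lies in $\mf{h}$, so $r\in\mf{h}$; otherwise, say $C_1\not\cu\mf{h}$, a point $c\in C_1\cap\mf{h}^*$ satisfies $\pi_1(r)\in I(r,c)$ by the gate property, and then $r\in\mf{h}^*$ would force $\pi_1(r)\in\mf{h}^*$ by convexity, contradicting $\pi_1(r)\in\mf{h}$. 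Combining the two inclusions yields $H=J$, which is compact.
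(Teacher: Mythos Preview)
Your argument has a genuine technical gap in part~1: the topological median algebra $M$ is only assumed Hausdorff, not first countable or metrisable, so closedness and continuity cannot be tested with sequences. As written, your proof only shows that $C$ is sequentially closed and that $\pi_C$ is sequentially continuous, which is not enough in a general compact Hausdorff space. The fix is painless: replace sequences and subsequences by nets and subnets throughout; every step (the identity $m(x,x_\alpha,\pi_C(x))=\pi_C(x)$, passing to limits, uniqueness of gates) goes through verbatim. The paper itself uses nets for exactly this reason, and for closedness takes a slightly different route, exhibiting directly an open neighbourhood of any $x\notin C$ disjoint from $C$ via the interval projection $\pi_{I(x,\pi_C(x))}$.

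Your part~2 is correct and in fact more self-contained than the paper's version. Both arguments hinge on the same claim, namely that every $r$ in the hull $H$ satisfies $r\in I(\pi_1(r),\pi_2(r))$, equivalently that $H$ is the fixed-point set of $x\mapsto m(x,\pi_1(x),\pi_2(x))$. The paper outsources this to Proposition~2.3 of \cite{Roller}; you supply a clean direct proof via halfspaces. Your conclusion (compactness as a continuous image of $C_1\times C_2\times M$) and the paper's (compactness as a closed fixed-point set) are then two equally easy ways to finish.
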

\begin{proof}
Suppose that the projection $\pi$ to a gate-convex subset $C\cu M$ is not continuous. There exist $y\in M$ and a net $(y_j)_{j\in J}$ converging to $y$, such that $\pi(y_j)$ does not converge to $\pi(y)$. By compactness, there exists a subnet $(z_k)_{k\in K}$ such that $(\pi(z_k))_{z\in K}$ converges to a point $z\neq\pi(y)$; since $C$ is closed by Lemma~\ref{gate-convexity vs compact intervals}, we have $z\in C$. Thus, $\pi(z_k)=m(z_k, \pi(y), \pi(z_k))$ converges to $m(y,\pi(y),z)=\pi(y)$ for $k\in K$; this implies $z=\pi(y)$, a contradiction. 

For part~2, the map $f\colon M\ra M$ given by $f(x):=m\left(x,\pi_{C_1}(x),\pi_{C_2}(x)\right)$ is continuous by part~1 and Lemma~\ref{gate-convexity vs compact intervals}. The hull of $C_1\cup C_2$ is precisely the fixed-point set of $f$; this easily follows from Proposition~2.3 in \cite{Roller} and the gate-property. We conclude that the hull is closed, hence compact.
\end{proof}

Given a metric space $X$ and $x,y\in X$, we denote by $I(x,y)$ the \emph{interval} between $x$ and $y$, i.e.\ the set of points $z\in X$ such that $d(x,y)=d(x,z)+d(z,y)$. We say that $X$ is a \emph{median space} if, for all $x,y,z\in X$, the intersection $I(x,y)\cap I(y,z)\cap I(z,x)$ consists of a single point, which we denote by $m(x,y,z)$. This defines a median-algebra structure on $X$ with the same notion of interval; in particular, we can define rank, convexity and gate-convexity for subsets of $X$. 

A complete median space is geodesic if and only if it is connected, see Lemma~4.6 in \cite{Bow4}. In this case, the interval $I(x,y)$ is simply the union of all geodesics with endpoints $x$ and $y$. 

If $X$ is a geodesic median space, its rank coincides with the supremum of the topological dimensions of its locally compact subsets --- even when either of the two quantities is infinite; see Theorem~2.2 and Lem\-ma~7.6 in \cite{Bow1} for one inequality and Proposition~5.6 in \cite{Bow4} for the other. We prefer to speak of \emph{rank}, rather than \emph{dimension}, as the metric spaces that we will be interested in could well be disconnected, e.g.\ $0$-skeleta of ${\rm CAT}(0)$ cube complexes. 

\begin{ex}\label{L1 are median}
Let $(\Om,\mscr{B},\mu)$ be a measure space.
\begin{enumerate}
\item The space $L^1(\Om,\mu)$ is median when endowed with the metric induced by its norm. The median map is determined by the property that $m(f,g,h)(x)$ is the middle value of $\{f(x),g(x),h(x)\}$, for almost every $x$ and all $f,g,h\in L^1(\Om,\mu)$.
\item Given any $E\cu\Om$, the collection $\mc{M}_E$ of all $F\cu\Om$ such that $E\triangle F$ is measurable and finite-measure can be given the pseudometric 
\[d(A,B)=\mu(A\triangle B).\] 
This makes sense as $A\triangle B=(A\triangle E)\triangle (B\triangle E)$. Identifying sets at distance zero, the space $\mc{M}_E$ can be isometrically embedded into $L^1(\Om,\mu)$ by mapping $F\mapsto\mathds{1}_{F\triangle E}$ and it inherits a median metric. A point lies in the set $m(A,B,C)$ if and only if it lies in at least two of the sets $A,B,C\cu\Om$;  the interval $I(A,B)$ can be recognised as the collection of sets $Z$ satisfying $A\cap B\cu Z\cu A\cup B$.  
\end{enumerate}
\end{ex}

Let $X$ be a median space throughout the rest of this section. The median map $m$ is $1$-Lipschitz if we endow $X^3$ with the $\ell^1$ metric. If $C\cu X$ is convex and $x\in X$, a point $z\in C$ is a gate for $(x,C)$ if and only if $d(x,C)=d(x,z)$; gate-projections are $1$-Lipschitz. Gate-convex sets are closed and convex; the converse holds in complete median spaces. See \cite{CDH} for further details and examples.

In complete median spaces we can complement Lemma~\ref{gates for pairs of convex sets 1} above.

\begin{lem}\label{gates for pairs of convex sets 2}
If $X$ is complete and $C_1,C_2\cu X$ are closed and convex, the points $z_1\in C_1$, $z_2\in C_2$ form a pair of gates for $(C_1,C_2)$ if and only if ${d(z_1,z_2)=d(C_1,C_2)}$. In particular, disjoint closed convex sets always have positive distance.
\end{lem}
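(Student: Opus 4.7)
The plan is to treat the two directions separately, using completeness only to ensure that $C_1$ and $C_2$ are gate-convex (as stated in the preliminaries).

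For the forward direction, suppose $(z_1,z_2)$ is a pair of gates and take arbitrary $w_1\in C_1$, $w_2\in C_2$. By the gate properties, $z_1\in I(z_2,w_1)$ and $z_2\in I(z_1,w_2)$, which translate into the two equalities
\[ d(w_1,z_2)=d(w_1,z_1)+d(z_1,z_2), \qquad d(z_1,w_2)=d(z_1,z_2)+d(z_2,w_2). \]
Applying the triangle inequality to the triples $(w_1,w_2,z_2)$ and $(w_1,w_2,z_1)$ yields respectively
\[ d(w_1,w_2)\geq d(w_1,z_2)-d(w_2,z_2)=d(w_1,z_1)+d(z_1,z_2)-d(w_2,z_2), \]
\[ d(w_1,w_2)\geq d(z_1,w_2)-d(w_1,z_1)=d(z_1,z_2)+d(z_2,w_2)-d(w_1,z_1). \]
Adding these and dividing by two gives $d(w_1,w_2)\geq d(z_1,z_2)$, hence $d(z_1,z_2)=d(C_1,C_2)$.

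For the backward direction, assume $d(z_1,z_2)=d(C_1,C_2)$. Since $X$ is complete and $C_2$ is closed and convex, $C_2$ is gate-convex; let $z_2':=\pi_{C_2}(z_1)$. On the one hand $d(z_1,z_2')\leq d(z_1,z_2)$, since the gate realises the distance; on the other hand $z_2'\in C_2$ forces $d(z_1,z_2')\geq d(C_1,C_2)=d(z_1,z_2)$. Hence equality holds. The gate property gives $z_2'\in I(z_1,z_2)$, so $d(z_1,z_2)=d(z_1,z_2')+d(z_2',z_2)$, which forces $z_2'=z_2$. Thus $z_2=\pi_{C_2}(z_1)$ is a gate for $(z_1,C_2)$; the symmetric argument gives $z_1=\pi_{C_1}(z_2)$, and $(z_1,z_2)$ is a pair of gates.

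For the final assertion, pick any $y_1\in C_1$ and set $z_2:=\pi_{C_2}(y_1)$, $z_1:=\pi_{C_1}(z_2)$. Part~4 of Lemma~\ref{gates vs inclusions} (with the roles of $C_1$ and $C_2$ exchanged) gives $\pi_{C_2}(z_1)=\pi_{C_2}\pi_{C_1}\pi_{C_2}(y_1)=\pi_{C_2}\pi_{C_1}(z_2)=\pi_{C_2}(z_2)=z_2$, so $(z_1,z_2)$ is a pair of gates for $(C_1,C_2)$. If $C_1\cap C_2=\emptyset$, then $z_1\neq z_2$, whence the already proved first part gives $d(C_1,C_2)=d(z_1,z_2)>0$. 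The only mildly delicate point is the trick of adding two triangle inequalities in the forward direction; everything else follows routinely from gate-convexity and the standard properties of gate-projections.
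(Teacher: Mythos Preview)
Your proof is correct, but the two directions are handled rather differently from the paper.

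For the forward direction, the paper projects to the interval $I:=I(z_1,z_2)$: one checks that $C_i\cap I=\{z_i\}$, so by part~1 of Lemma~\ref{gates vs inclusions} the $1$-Lipschitz map $\pi_I$ sends any $z_i'\in C_i$ to $z_i$, giving $d(z_1,z_2)\leq d(z_1',z_2')$ immediately. Your approach via summing two triangle inequalities is more elementary---it uses only the metric, not the median structure or gate-projections to intervals---and is a nice self-contained argument. The paper's route, on the other hand, yields the extra information $C_i\cap I(z_1,z_2)=\{z_i\}$ along the way.

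For the backward direction, you work harder than necessary. The preliminaries already record that in a median space a point $z\in C$ is a gate for $(x,C)$ if and only if $d(x,z)=d(x,C)$; this holds for any convex $C$, without completeness. Since $d(z_1,z_2)=d(C_1,C_2)\leq d(z_1,C_2)\leq d(z_1,z_2)$, the point $z_2$ is a gate for $(z_1,C_2)$ directly, and symmetrically for $z_1$. Your detour through $\pi_{C_2}$ is correct but invokes completeness where it is not needed. The final assertion is handled the same way in both: existence of a pair of gates (your paragraph is essentially the argument of Lemma~\ref{gates for pairs of convex sets 1}) plus the equivalence just proved.
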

\begin{proof}
If $d(z_1,z_2)=d(C_1,C_2)$, it is immediate that $(z_1,z_2)$ is a pair of gates. Conversely, given a pair of gates $(z_1,z_2)$, we set $I:=I(z_1,z_2)$; if $z_i'\in C_i$, we have $\pi_I(z_i')=z_i$ by part~1 of Lemma~\ref{gates vs inclusions} and the observation that $C_i\cap I=\{z_i\}$. Since $\pi_I$ is $1$-Lipschitz, we have $d(z_1,z_2)\leq d(z_1',z_2')$; hence $d(z_1,z_2)=d(C_1,C_2)$ by the arbitrariness of $z_i'$.
\end{proof}

\begin{lem}\label{locally convex}
If $X$ has finite rank, it is locally convex.
\end{lem}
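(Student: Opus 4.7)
My plan is to construct, for each $x \in X$ and each $\epsilon > 0$, an explicit convex neighborhood of $x$ contained in the open ball $B(x,\epsilon)$. A natural candidate is the closed convex hull $V := \overline{\mathrm{conv}}\,B(x,\delta)$ of a suitably small ball $B(x,\delta)$: this is automatically closed and convex, and contains $B(x,\delta)$, so it is a neighborhood of $x$. The only content of the lemma is then to choose $\delta$ small enough, as a function of $\epsilon$ and $n := \mathrm{rank}(X)$, to ensure $V \subseteq B(x,\epsilon)$. Equivalently, the key estimate to establish is that there is a constant $C = C(n)$ with $\overline{\mathrm{conv}}\,B(x,r) \subseteq B(x,Cr)$ for every $x$ and $r > 0$; taking $\delta = \epsilon/(C+1)$ will then suffice.

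To prove the estimate, I fix $z \in \overline{\mathrm{conv}}\,B(x,r)$ and study the halfspaces separating $x$ from $z$. First I observe that any $\mf{h} \in \mscr{H}$ with $z \in \mf{h}$ and $x \notin \mf{h}$ must intersect $B(x,r)$: otherwise $B(x,r) \subseteq \mf{h}^*$ would force $\overline{\mathrm{conv}}\,B(x,r) \subseteq \overline{\mf{h}^*}$ by convexity of $\mf{h}^*$, contradicting $z \in \mf{h}$ (with a brief open/closed argument using Proposition~B handling the boundary case). Hence every wall in $\mscr{W}(x|z)$ also separates $x$ from some $y \in B(x,r)$ with $d(x,y) \leq r$.

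Next I exploit finite rank. The poset $\mscr{H}(x|z)$ has antichains consisting of pairwise-transverse halfspaces, so antichains have cardinality bounded by $n$. Combining this with a Dilworth-style decomposition, together with the length bound on chains of halfspaces from Proposition~B, $\mscr{H}(x|z)$ splits into a bounded (in terms of $n$) number of chains, each made of halfspaces separating $x$ from a single $y \in B(x,r)$ and hence contributing at most $d(x,y) \leq r$ to the ``total wall-crossing length'' along a geodesic from $x$ to $z$. Summing over chains then yields $d(x,z) \leq Cr$ for a constant $C = C(n)$, as desired.

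The main obstacle is the final quantitative step: converting the combinatorial bound on chains into a metric bound on $d(x,z)$ really requires a measure on halfspaces, which is only developed in Section~\ref{measure theory}. An elementary workaround, which I expect the paper uses at this point, is to argue directly via gate-projections: for $z \in \overline{\mathrm{conv}}\,B(x,r)$, project $z$ onto an interval $I(x,y)$ for a well-chosen $y \in B(x,r)$ and iterate, using Proposition~\ref{retractions new} and Lemma~\ref{gates vs inclusions} to control the remaining distance. Finite rank ensures the iteration terminates in at most $n$ steps, and a telescoping estimate then gives $d(x,z) \leq nr$. Making this iteration precise, without relying on later measure-theoretic tools, is the delicate point of the argument.
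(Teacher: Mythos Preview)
The paper's own proof is a two-line reduction: from $y,z\in B(x,\eps)$ one has $I(y,z)\subseteq B(x,2\eps)$, so $X$ is ``weakly locally convex'' in Bowditch's sense, and the conclusion is Lemma~7.1 of \cite{Bow1}. In other words, the paper does not prove the key estimate $\mathrm{conv}\,B(x,r)\subseteq B(x,Cr)$ here; it imports it wholesale from Bowditch. Your outline is, in spirit, an attempt to reprove that Bowditch lemma from scratch.

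There is however a genuine circularity in your plan. You invoke Proposition~B (the open/closed dichotomy and the bound on chains with $\overline{\mf{h}_1^*}\cap\overline{\mf{h}_k}\neq\emptyset$) both to handle the boundary case for $\overline{\mathrm{conv}}\,B(x,r)$ and to control chains in $\mscr{H}(x|z)$. But in the paper Proposition~B is Corollary~\ref{open/closed} together with Proposition~\ref{trivial chains of halfspaces}, and the proof of Corollary~\ref{open/closed} goes through Proposition~\ref{boundaries of halfspaces}, which explicitly \emph{uses} Lemma~\ref{locally convex}. So Proposition~B is downstream of the very statement you are trying to prove, and you cannot appeal to it here. (The first use is easily avoided by working with the open hull and enlarging the ball at the end; the second use, bounding chain lengths, is the substantive one and cannot be patched this way.)

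Even setting circularity aside, the quantitative step is not complete. After Dilworth, each chain in $\mscr{H}(x|z)$ consists of halfspaces that individually meet $B(x,r)$, but you assert that a whole chain lies in some $\mscr{H}(x|y)$ for a single $y\in B(x,r)$; this needs an argument (for an infinite descending chain there is no smallest halfspace in which to pick $y$), and measurability of the chains for the measure of Theorem~\ref{median->halfspaces} is also not addressed. Your proposed ``elementary workaround'' via iterated gate-projections is the right instinct and is essentially how Bowditch proceeds, but as written it is only a sketch: you do not specify how to choose $y$, why the rank drops at each step, or how the telescoping bound comes out. To make this self-contained you would effectively have to reproduce the content of \cite[Lemma~7.1]{Bow1}; the paper chooses instead simply to cite it.
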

\begin{proof}
Given $x\in X$, $\eps>0$ and $y,z\in B(x,\eps)$, we have $I(y,z)\cu B(x,2\eps)$. Thus $X$ is ``weakly locally convex'' in the sense of \cite{Bow1} and we conclude by Lemma~7.1 in \cite{Bow1}.
\end{proof}

The class of locally convex median spaces encompasses both finite rank median spaces and infinite dimensional ${\rm CAT}(0)$ cube complexes. Instead, the median space $L^1([0,1])$ is not locally convex; as we shall see in Example~\ref{dense halfspaces in L^1}, the convex hull of any nonempty open subset is the entire $L^1([0,1])$.

\begin{lem}\label{pre-adjacency}
Suppose $X$ is complete and locally convex and let $\{C_i\}_{i\in I}$ be a collection of convex subsets of $X$ with nonempty intersection $K$. The intersection of $\{\overline{C_i}\}_{i\in I}$ is $\overline K$.
\end{lem}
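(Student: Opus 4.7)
The inclusion $\overline K\cu\bigcap_i\overline{C_i}$ is immediate from $K\cu C_i$ for every $i$, so the content of the lemma lies in the reverse inclusion. To prove it, I fix $x\in\bigcap_i\overline{C_i}$ and $y\in K$; my plan is to approximate $x$ by points of $K$ directly, by gate-projecting $y$ onto a suitable small convex neighbourhood of $x$.

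Given $\eps>0$, local convexity of $X$ provides a convex open neighbourhood $U$ of $x$ with $U\cu B(x,\eps)$. Its closure $\overline U$ is also convex (closures of convex sets in a topological median algebra are convex, since the median map is continuous), hence closed convex in a complete median space, hence gate-convex. Let $\pi\colon X\ra\overline U$ denote the gate-projection.

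The key observation is that $\pi(y)$ automatically lies in every $C_i$. Indeed, since $x\in\overline{C_i}$ and $U$ is a neighbourhood of $x$, we have $U\cap C_i\neq\emptyset$, and in particular $\overline U\cap C_i\neq\emptyset$. Part~(1) of Lemma~\ref{gates vs inclusions}, applied to the convex set $C_i$ and the gate-convex set $\overline U$, yields $\pi(C_i)=C_i\cap\overline U$; since $y\in C_i$, this forces $\pi(y)\in C_i$. As $i$ was arbitrary, $\pi(y)\in\bigcap_iC_i=K$. On the other hand $\pi(y)\in\overline U\cu\overline{B(x,\eps)}$, so $d(x,\pi(y))\leq\eps$, producing a point of $K$ within $\eps$ of $x$. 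Letting $\eps\ra 0$ gives $x\in\overline K$.

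The proof is quite short once the right construction is found; there is no need for transfinite induction or iterated medians. The only non-formal ingredients are local convexity (to produce $U$), completeness of $X$ (to upgrade $\overline U$ from merely closed convex to gate-convex), and part~(1) of Lemma~\ref{gates vs inclusions} (which forces the gate-projection of each $C_i$ to land inside $C_i$ itself whenever $C_i$ meets the target). The only point that requires a moment of care is verifying that $\overline U\cap C_i$ is nonempty, which is precisely where the assumption $x\in\overline{C_i}$ is consumed.
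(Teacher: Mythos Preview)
Your proof is correct and follows essentially the same approach as the paper: pick a small convex neighbourhood of $x$, use completeness to make its closure gate-convex, and apply part~(1) of Lemma~\ref{gates vs inclusions} to each $C_i$ to see that the gate-projection of any point of $K$ stays in $K$. The only cosmetic difference is that the paper phrases this as $\pi_{\eps}(K)\cu\overline{N}_{\eps}\cap K$ rather than tracking a single chosen point $y\in K$.
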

\begin{proof}
We only need to prove that, if $x\in\overline{C_i}$ for all $i\in I$, then $x\in\overline K$. Given $\eps>0$, let $N_{\eps}\cu B(x,\eps)$ be a convex neighbourhood of $x$; denote by $\pi_{\eps}$ the gate-projection to $\overline{N}_{\eps}$. Since $C_i\cap N_{\eps}\neq\emptyset$ for all $i\in I$, part~1 of Lemma~\ref{gates vs inclusions} implies that $\pi_{\eps}(C_i)=C_i\cap\overline{N}_{\eps}$ and
\[\pi_{\eps}(K)\cu\bigcap_i\pi_{\eps}(C_i)\cu\left(\bigcap_i C_i\right)\cap\overline{N}_{\eps}=K\cap\overline{N}_{\eps}.\]
Hence $K$ intersects $B(x,2\eps)$ for all $\eps>0$ and, by the arbitrariness of $\eps$, we conclude that $x\in\overline K$.
\end{proof}

\subsection{SMW's, PMP's and SMH's.}\label{SMW}

Let $X$ be a set. A \emph{wall} is an unordered pair $\{\mf{h},\mf{h}^*\}$ corresponding to any partition $X=\mf{h}\sqcup\mf{h}^*$. The wall \emph{separates} subsets $A,B\cu X$ if $A\cu\mf{h}$ and $B\cu\mf{h}^*$ or vice versa. As in Section~\ref{median algebras}, we use the notation $\mscr{W}(A|B)$ to refer to walls separating $A$ and $B$.

\begin{defn}
We say that the 4-tuple $(X,\mscr{W},\mscr{B},\mu)$ is a \emph{space with measured walls (SMW)} if $\mscr{W}$ is a collection of walls of $X$ and the measure $\mu$, defined on the $\s$-algebra $\mscr{B}\cu 2^{\mscr{W}}$, satisfies $\mu\left(\mscr{W}(x|y)\right)<+\infty$ for all $x,y\in X$.
\end{defn}

If $(X,\mscr{W},\mscr{B},\mu)$ is a space with measured walls, the associated collection of \emph{halfspaces} is the set $\mscr{H}$ of those $\mf{h}\cu X$ such that $\{\mf{h},\mf{h}^*\}\in\mscr{W}$. It is endowed with a two-to-one projection $\pi\colon\mscr{H}\ra\mscr{W}$ given by $\pi(\mf{h})=\{\mf{h},\mf{h}^*\}$. We can define a pseudo-metric on $X$ by setting $\text{pdist}_{\mu}(x,y):=\mu\left(\mscr{W}(x|y)\right)$. When this is a genuine metric, we speak of a \emph{faithful} SMW. 

Let $(X',\mscr{W}',\mscr{B}',\mu')$ be another SMW. Any map $f\colon X\ra X'$ such that $\{f^{-1}(\mf{h}),f^{-1}(\mf{h}^*)\}\in\mscr{W}$ whenever $\{\mf{h},\mf{h}^*\}\in\mscr{W}'$ induces a map $f^*\colon\mscr{W}'\ra\mscr{W}$. If $f^*$ is measurable and $(f^*)_*\mu'=\mu$, we say that $f$ is a \emph{homomorphism} of spaces with measured walls. Note that this definition differs slightly from the one in \cite{CDH}. If $f$ is a homomorphism we have 
\[\mu\left(\mscr{W}(x|y)\right)=\mu'\left((f^*)^{-1}\left(\mscr{W}(x|y)\right)\right)=\mu'\left(\mscr{W}(f(x)|f(y))\right),\]
so the map $f\colon X\ra X'$ preserves the pseudo-metric. In particular, isomorphisms of SMW's are isometries for the pseudo-metrics.

\begin{defn}
A \emph{pointed measured pocset (PMP)} is a 4-tuple $\left(\mscr{P},\mscr{D},\eta,\s\right)$, where $\mscr{P}$ is a pocset, $\mscr{D}$ is a $\s$-algebra of subsets of $\mscr{P}$, the measure $\eta$ is defined on $\mscr{D}$ and $\s\cu\mscr{P}$ is a (not necessarily measurable) ultrafilter.
\end{defn}

In analogy to the terminology of \cite{CDH}, we say that an ultrafilter ${\s'\cu\mscr{P}}$ is \emph{admissible} if $\s'\triangle\s\in\mscr{D}$ and $\eta(\s'\triangle\s)<+\infty$. We will denote by $\mc{M}\left(\mscr{P},\mscr{D},\eta,\s\right)$ (or simply $\mc{M}$) the set of admissible ultrafilters associated to the pointed measured pocset $\left(\mscr{P},\mscr{D},\eta,\s\right)$. As in Example~\ref{L1 are median}, we can equip $\mc{M}$ with the median pseudo-metric $d(\s_1,\s_2):=\eta(\s_1\triangle\s_2)$. We identify admissible ultrafilters at zero distance, so that $\mc{M}$ becomes a median space.

Two PMP's $\left(\mscr{P},\mscr{D},\eta,\s\right)$ and $\left(\mscr{P}',\mscr{D}',\eta',\s'\right)$ are \emph{isomorphic} if there exists an isomorphism of pocsets $f\colon\mscr{P}\ra\mscr{P}'$ such that $f$ and $f^{-1}$ are measurable, $f_*\eta=\eta'$ and $\eta(f^{-1}(\s')\triangle\s)<+\infty$. Any isomorphism of the two PMP's induces an isometry of the corresponding median spaces $\mc{M}$ and $\mc{M}'$.

Given a SMW $(X,\mscr{W},\mscr{B},\mu)$, we always obtain a PMP $(\mscr{H},\pi^*\mscr{B},\pi^*\mu,\s_x)$, where $\s_x\cu\mscr{H}$ is the set of halfspaces containing $x$. Here $\pi^*\mscr{B}$ denotes the $\s$-algebra $\{\pi^{-1}(E)\mid E\in\mscr{B}\}$ and $\pi^*\mu(\pi^{-1}(E))=\mu(E)$. The choice of $x\in X$ does not affect the isomorphism type of the PMP. 

We simply denote by $\mc{M}(X)$ the associated median space of admissible ultrafilters; unlike in \cite{CDH, Chatterji-Drutu}, for us this is a genuine metric space. We have a pseudo-distance-preserving map $X\ra\mc{M}(X)$ given by $y\mapsto\s_y$. If $(X,\mscr{W},\mscr{B},\mu)$ is faithful, this is an isometric embedding. We have recovered the following:

\begin{prop}[\cite{CDH}]\label{walls->median}
Any faithful space with measured walls can be isometrically embedded into a median space.
\end{prop}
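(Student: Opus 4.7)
The statement is essentially a bookkeeping verification once the objects are set up, and my plan would be to simply check that the canonical map $\iota\colon X\to\mc{M}(X)$, $y\mapsto\s_y$, lands in the space of admissible ultrafilters and preserves distances. First I would fix a basepoint $x\in X$, which determines the PMP $(\mscr{H},\pi^*\mscr{B},\pi^*\mu,\s_x)$ promoted from the SMW. I would then observe that for every $y\in X$ and every wall $\{\mf{h},\mf{h}^*\}\in\mscr{W}$, exactly one of $\mf{h},\mf{h}^*$ contains $y$ (since $X=\mf{h}\sqcup\mf{h}^*$), so $\s_y$ is a section of $\pi$; moreover any two halfspaces in $\s_y$ share the point $y$ and hence intersect, giving that $\s_y$ is an ultrafilter.

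Next I would verify admissibility. The key identity is
\[
\s_x\triangle\s_y \;=\; \pi^{-1}\bigl(\mscr{W}(x|y)\bigr),
\]
which follows at once because $\mf{h}\in\s_x\triangle\s_y$ means exactly that one of $x,y$ is in $\mf{h}$ and the other is in $\mf{h}^*$, i.e.~that $\{\mf{h},\mf{h}^*\}$ separates $x$ from $y$; this set is automatically closed under complementation, giving the preimage description. Measurability and finite measure of $\mscr{W}(x|y)$ come for free from the SMW axioms, so $\s_x\triangle\s_y\in\pi^*\mscr{B}$ has $\pi^*\mu$-measure $\mu(\mscr{W}(x|y))<+\infty$. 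Thus $\s_y\in\mc{M}(X)$.

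Isometry is then immediate: the median pseudo-metric on $\mc{M}(X)$ inherited from Example~\ref{L1 are median} (via the isometric embedding $\s\mapsto\mathds{1}_{\s\triangle\s_x}$ into $L^1(\mscr{H},\pi^*\mu)$) satisfies
\[
d\bigl(\s_y,\s_z\bigr) \;=\; \pi^*\mu\bigl(\s_y\triangle\s_z\bigr) \;=\; \mu\bigl(\mscr{W}(y|z)\bigr) \;=\; \text{pdist}_{\mu}(y,z),
\]
where the middle equality uses the same preimage identity applied to the pair $(y,z)$ rather than $(x,y)$. Faithfulness of the SMW upgrades this pseudo-metric equality to genuine isometric injectivity of $\iota$.

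The only conceptual point that is not bookkeeping is checking that $\mc{M}(X)$ is itself a median space, which is what legitimises speaking of ``embedding into a median space.'' I would dispatch this by verifying that if $\s_1,\s_2,\s_3\in\mc{M}(X)$ are admissible ultrafilters, then the majority selector $m(\s_1,\s_2,\s_3):=\{\mf{h}\in\mscr{H}\mid \mf{h}\in\s_i\text{ for at least two indices }i\}$ is again an admissible ultrafilter: it is a section of $\pi$ by a finite case-check at each wall; any two of its halfspaces $\mf{h},\mf{k}$ lie together in some common $\s_i$ (by pigeonhole on the three ultrafilters), hence intersect; and $m(\s_1,\s_2,\s_3)\triangle\s_x$ is contained in $\bigcup_i(\s_i\triangle\s_x)$, so it is measurable with finite measure. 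This step is the closest thing to a genuine obstacle, but is entirely elementary. The identification of this median structure with the one induced from $L^1$ under $\s\mapsto\mathds{1}_{\s\triangle\s_x}$ (as in Example~\ref{L1 are median}) is routine.
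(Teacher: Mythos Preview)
Your proposal is correct and follows exactly the approach the paper uses: the paper's ``proof'' is the paragraph immediately preceding the proposition, which sets up the PMP $(\mscr{H},\pi^*\mscr{B},\pi^*\mu,\s_x)$ and asserts that $y\mapsto\s_y$ is pseudo-distance-preserving, hence an isometric embedding when the SMW is faithful. You have simply unpacked the verifications (ultrafilter, admissibility via $\s_x\triangle\s_y=\pi^{-1}(\mscr{W}(x|y))$, isometry, and median structure on $\mc{M}(X)$) that the paper leaves implicit and attributes to \cite{CDH}.
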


The embedding is canonical in that every automorphism of $X$ as SMW extends to an isometry of $\mc{M}(X)$. Note however that the restriction of the metric to $X$ does not have to be median, as $X$ might not be a median subalgebra. The following is a partial converse to the previous proposition. 

\begin{thm}[\cite{CDH}]\label{median->walls}
Let $Y$ be a median space, $\mscr{W}$ its set of convex walls and $\mscr{B}$ be the $\s$-algebra generated by wall-intervals. There exists a measure $\mu$ on $\mscr{W}$ such that $\mu\left(\mscr{W}(x|y)\right)=d(x,y)$ for all $x,y\in Y$. In particular, $(Y,\mscr{W},\mscr{B},\mu)$ is a faithful space with measured walls and we have isometric embeddings
\[Y\hookrightarrow\mc{M}(Y)\hookrightarrow L^1(\mscr{H},\pi^*\mu).\]
\end{thm}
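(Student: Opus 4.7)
The substance of this theorem is the construction of the measure $\mu$ on $\mscr{W}$ with $\mu(\mscr{W}(x|y)) = d(x,y)$; once this is in place, faithfulness and the two embeddings follow essentially formally from the material in Section~\ref{SMW} (culminating in Proposition~\ref{walls->median}). My plan is to build $\mu$ via the Carathéodory extension theorem. The first task is to check that the wall-intervals $\mscr{W}(x|y)$, together with $\emptyset$, form a semi-ring of subsets of $\mscr{W}$. Given two wall-intervals $\mscr{W}(x_1|y_1)$, $\mscr{W}(x_2|y_2)$, each wall in the ambient algebra induces a partition of $\{x_1,y_1,x_2,y_2\}$; there are only finitely many such partitions, and for each one the walls realising it form a wall-interval of the form $\mscr{W}(z_1|z_2)$, where $z_1,z_2$ is a pair of gates between two intervals spanned by subsets of the four points (Lemma~\ref{gates for pairs of convex sets 1}). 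This gives finite disjoint-union decompositions for both intersections and relative complements. I then set $\mu_0(\mscr{W}(x|y)) := d(x,y)$ on the semi-ring; well-definedness and finite additivity follow from the decomposition $\mscr{W}(x|y) = \mscr{W}(x|z) \sqcup \mscr{W}(z|y)$ for $z \in I(x,y)$, together with $d(x,y) = d(x,z) + d(z,y)$, applied to all the partitions above.

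The core obstacle is $\sigma$-additivity of $\mu_0$: if $\mscr{W}(x|y) = \bigsqcup_n \mscr{W}(x_n|y_n)$, then $\sum_n d(x_n, y_n) = d(x,y)$. The inequality $\sum_n d(x_n,y_n) \leq d(x,y)$ is immediate by iterating finite additivity; the reverse inequality is the delicate point. My approach is to parametrise walls in $\mscr{W}(x|y)$ by a ``crossing coordinate'' on $I(x,y)$: each wall $\mf{h} \in \mscr{W}(x|y)$ is gate-projected to a (possibly degenerate) sub-interval of $I(x,y)$, and the pushforward of the pre-measure $\mu_0$ along this measurable parametrisation is Lebesgue measure on $[0,d(x,y)]$. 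Countable additivity of Lebesgue measure then transfers to $\mu_0$, and Carathéodory extension produces the desired measure $\mu$ on $\mscr{B}$.

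The remaining assertions are essentially formal. Faithfulness follows from the separation result (Theorem~2.7 in \cite{Roller}) recalled in Section~\ref{median algebras}: distinct points of $Y$ can be separated by a convex wall, so $\mu(\mscr{W}(x|y)) = d(x,y) > 0$ whenever $x \neq y$. The embedding $Y \hookrightarrow \mc{M}(Y)$ is then the canonical pseudo-distance-preserving map produced before Proposition~\ref{walls->median}, which is now genuinely isometric by faithfulness. Finally, after fixing a basepoint $o \in Y$ and using that admissible ultrafilters $\s$ are by definition those with $\s \triangle \s_o$ of finite $\pi^*\mu$-mass, the map $\s \mapsto \mathds{1}_{\s \triangle \s_o}$ realises an isometric embedding $\mc{M}(Y) \hookrightarrow L^1(\mscr{H}, \pi^*\mu)$ directly from the definition of the median metric on $\mc{M}(Y)$.
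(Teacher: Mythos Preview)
The paper does not prove this theorem: it is stated as a citation from \cite{CDH} with no accompanying argument, so there is nothing in the paper to compare your proof against. What follows is an assessment of your sketch on its own merits.

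Your overall strategy via Carath\'eodory extension is the right one and is indeed how \cite{CDH} proceeds. The treatment of the semi-ring structure and finite additivity is essentially correct, with one small caveat: the intersection of two wall-intervals is in general a disjoint union of \emph{two} wall-intervals (corresponding to the two ways of separating $\{x_1,y_1,x_2,y_2\}$ into pairs), not a single one, so strictly speaking one passes to the ring generated by wall-intervals, which consists of finite disjoint unions of wall-intervals. Your deduction of faithfulness and of the two embeddings at the end is fine.

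The genuine gap is the $\sigma$-additivity step. The ``crossing coordinate on $I(x,y)$'' and the assertion that the pushforward of $\mu_0$ is Lebesgue measure on $[0,d(x,y)]$ are not justified and, as stated, do not make sense. First, $I(x,y)$ is not a real interval in general (already in rank two it can be a square), and a wall does not gate-project to a sub-interval of $I(x,y)$ in any evident way; it cuts $I(x,y)$ into two halfspaces. Second, even if you assign to each wall the number $d(x,\mf{h})$, there is no reason the pushforward should be Lebesgue: in a ${\rm CAT}(0)$ cube complex the measure on $\mscr{W}(x|y)$ is counting measure and the pushforward is atomic. Third, ``pushing forward a pre-measure'' and then pulling $\sigma$-additivity back along the map is not a standard manoeuvre, and would itself require justification. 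You need a direct argument here: reduce to the case where all $x_n,y_n$ lie in $I(x,y)$ by gate-projecting, and then exploit the interval identity $d(x,y)=d(x,z)+d(z,y)$ together with the combinatorics of how the $\mscr{W}(x_n|y_n)$ partition $\mscr{W}(x|y)$; this is what \cite{CDH} does, and it is the substantive part of the proof.
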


Summing up, we can associate a median space $\mc{M}(X)$ to every faithful SMW $(X,\mscr{W},\mscr{B},\mu)$ and a faithful SMW to every median space. One can wonder whether the compositions 
\[\text{SMW}\leadsto\text{median space}\leadsto\text{SMW},\]
\[\text{median space}\leadsto\text{SMW}\leadsto\text{median space},\]
are the identity. While this has no hope of being true in the first case (we could have taken a set of non-convex walls of a median space), we will show in Corollary~\ref{tangible} that $Y\simeq\mc{M}(Y)$ for all locally convex median spaces $Y$.

We conclude by introducing the following variation on the notion of SMW, which will be more useful to us in the following treatment.

\begin{defn}
The 4-tuple $(X,\mscr{H},\mscr{B},\nu)$ is a \emph{space with measured halfspaces (SMH)} if $\mscr{H}\cu 2^X$ is a collection of subsets of $X$ closed under taking complements, $\mscr{B}\cu 2^{\mscr{H}}$ is a $\s$-algebra and $\nu$ is a measure defined on $\mscr{B}$ satisfying $\nu\left(\mscr{H}(x|y)\right)=\nu\left(\mscr{H}(y|x)\right)<+\infty$ for all $x,y\in X$.
\end{defn}

If $\mf{h}\in\mscr{H}$, we set $\mf{h}^*:=X\setminus\mf{h}$ and, if $E\cu\mscr{H}$, we define $E^*:=\{\mf{h}^*\mid\mf{h}\in E\}$. We borrow the notation $\mscr{H}(A|B)$ and $\s_A$ from Section~\ref{median algebras}. Note that, if $(X,\mscr{W},\mscr{B},\mu)$ is a SMW with associated collection of halfspaces $\mscr{H}$ and projection $\pi\colon\mscr{H}\ra\mscr{W}$, the 4-tuple $(X,\mscr{H},\pi^*\mscr{B},\pi^*\mu)$ is \emph{not} a space with measured halfspaces; indeed, $\mscr{H}(x|y)\not\in\pi^*\mscr{B}$ if $x\neq y$. 

A pseudo-metric on $X$ and a notion of homomorphism of SMH's can be defined exactly as we did for SMW's. Given a space with measured halfspaces $(X,\mscr{H},\mscr{B},\nu)$ and a point $x\in X$, we obtain a PMP $(\mscr{H},\mscr{B},\nu,\s_x)$; we write $\mc{M}(X)$ instead of $\mc{M}(\mscr{H},\mscr{B},\nu,\s_x)$. The discussion in Section~5 of \cite{CDH} works identically if we replace the symbol $\mscr{W}$ with $\mscr{H}$ everywhere. In particular, we have:

\begin{thm}\label{median->halfspaces}
Let $Y$ be a median space, $\mscr{H}$ the set of convex halfspaces and $\mscr{B}$ be the $\s$-algebra generated by halfspace-intervals. There exists a measure $\nu$ on $\mscr{H}$ such that $\nu\left(\mscr{H}(x|y)\right)=d(x,y)$ for all $x,y\in Y$. In particular, $(Y,\mscr{H},\mscr{B},\nu)$ is a faithful space with measured halfspaces and we have isometric embeddings
\[Y\hookrightarrow\mc{M}(Y)\hookrightarrow L^1(\mscr{H},\nu).\]
\end{thm}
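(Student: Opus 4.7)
The plan is to adapt the construction of the measure $\mu$ on convex walls from Section~5 of \cite{CDH}, as the excerpt suggests, replacing walls by halfspaces throughout. The essential new point to verify is that halfspace-intervals satisfy the same combinatorial additivity as wall-intervals. Specifically, for $x,y\in Y$ and $z\in I(x,y)$, I would first show that
\[\mscr{H}(x|y)=\mscr{H}(x|z)\sqcup\mscr{H}(z|y).\]
The $\supseteq$ containment uses convexity of halfspaces: if $\mf{h}\in\mscr{H}(x|z)$, i.e.\ $x\in\mf{h}^*$ and $z\in\mf{h}$, then $y\in\mf{h}$, for otherwise $I(x,y)\cu\mf{h}^*$ by convexity of $\mf{h}^*$, contradicting $z\in\mf{h}$. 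The reverse containment and the disjointness are immediate from $z\in\mf{h}\sqcup\mf{h}^*$. Iterating this identity along any finite geodesic yields the combinatorial counterpart of the metric additivity $d(x,y)=\sum_i d(x_{i-1},x_i)$.

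Following \cite{CDH}, I would then build a finitely additive pre-measure $\nu_0$ on the Boolean algebra $\mc{A}\cu 2^{\mscr{H}}$ generated by all halfspace-intervals, by setting $\nu_0(\mscr{H}(x|y)):=d(x,y)$ and extending additively. Well-definedness reduces to consistency on the finite Boolean subalgebra generated by those $\mscr{H}(x|y)$ with $x,y$ in a fixed finite $F\cu Y$; its atoms correspond to the admissible orientation patterns on $F$, and the values match across different finite $F$'s thanks to the additivity identity above together with the metric relation $d(x,y)=d(x,z)+d(z,y)$ for $z\in I(x,y)$. This is essentially the same bookkeeping as in \cite{CDH}.

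The main obstacle is then countable additivity of $\nu_0$ on $\mc{A}$, after which Carath\'eodory's extension theorem provides $\nu$ on $\mscr{B}$. This step is the technical core of \cite{CDH}: one argues via a compactness/profinite refinement argument that any decreasing sequence in $\mc{A}$ with empty intersection must be eventually empty. Crucially, this argument only uses the additivity identity of the first step and the Boolean structure of $\mc{A}$, never the unordered nature of walls, so it transports verbatim. The identity $\nu(\mscr{H}(x|y))=d(x,y)$ then yields both the symmetry condition (via $d(x,y)=d(y,x)$) and faithfulness (via the existence of a separating halfspace whenever $x\neq y$; see Theorem~2.7 in \cite{Roller}). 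The embeddings $Y\hookrightarrow\mc{M}(Y,\mscr{H},\mscr{B},\nu)\hookrightarrow L^1(\mscr{H},\nu)$ via $y\mapsto\s_y$ and $y\mapsto\mathds{1}_{\s_y\triangle\s_o}$ are then formal consequences, exactly as in Theorem~\ref{median->walls}.
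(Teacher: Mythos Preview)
Your proposal is correct and follows exactly the approach the paper indicates: the paper does not give an independent proof but simply remarks that ``the discussion in Section~5 of \cite{CDH} works identically if we replace the symbol $\mscr{W}$ with $\mscr{H}$ everywhere,'' and you have spelled out precisely this, verifying the one new ingredient (the decomposition $\mscr{H}(x|y)=\mscr{H}(x|z)\sqcup\mscr{H}(z|y)$ for $z\in I(x,y)$) and then deferring to the Carath\'eodory argument of \cite{CDH}.
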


Note that $*\colon\mscr{H}\ra\mscr{H}$ is a measure-preserving involution. We will always denote the $\s$-algebras in Theorems~\ref{median->walls} and~\ref{median->halfspaces} by the same symbol as there is no chance of confusion.

\begin{rmk}\label{bad ultrafilters on R}
Embedding $Y\hookrightarrow\mc{M}(Y)$ as in Theorem~\ref{median->halfspaces}, each point $y\in Y$ is represented by all measurable ultrafilters $\s\cu\mscr{H}$ that have null symmetric difference with $\s_y$. Some of these ultrafilters can be ``bad'': the intersection of all halfspaces in $\s$ can be empty, rather than $\{y\}$. 

As an example, consider $Y=\R$ with its standard metric and $y=0$. The natural ultrafilter $\s_0$ representing $0$ consists of the halfspaces $(-\infty,a)$ for $a>0$, $(-\infty, a]$ for $a\geq 0$, $(a,+\infty)$ for $a<0$ and $[a,+\infty)$ for $a\leq 0$. However, $\overline\s_0:=\left(\s_0\setminus\{(-\infty, 0]\}\right)\cup\{(0,+\infty)\}$ also is an admissible ultrafilter representing $0$. Here, measurability and nullity of $\s_0\triangle\overline\s_0$ follow from the observation that $\{(-\infty, 0]\}$ coincides with the intersection of the countable family of halfspace-intervals $\mscr{H}(\tfrac{1}{n}|0)$, $n\geq 1$.
\end{rmk}

We remark that the measures in Theorems~\ref{median->walls} and~\ref{median->halfspaces} can be extended to \emph{complete} $\s$-algebras $\mscr{B}_0\supseteq\mscr{B}$, i.e.\ with the property that any subset of a null set is measurable; see e.g.\ Theorem~1.36 in \cite{Rudin}.

\subsection{Intervals and halfspaces.}

Let $X$ be a median space throughout this section. It is not hard to use the arguments of \cite{Bow2} to prove the following generalisation of Theorem~1.14 in \cite{BCGNW}; still, we provide a proof below for the convenience of the reader.

\begin{prop}\label{intervals are Euclidean}
Let $X$ be complete of rank $r<+\infty$. For every $x,y\in X$, there exists an isometric embedding $I(x,y)\hookrightarrow\R^r$, where $\R^r$ is endowed with the $\ell^1$ metric.
\end{prop}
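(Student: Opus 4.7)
My plan is to combine the measure $\nu$ on convex halfspaces from Theorem~\ref{median->halfspaces} with a Dilworth-type chain decomposition to produce $r$ real-valued coordinates. Consider $H := \mscr{H}(x|y)$ endowed with the inclusion order. Any two $\mf{h}, \mf{k} \in H$ both contain $y$ and have complements containing $x$, so they are either comparable under inclusion or transverse; hence antichains in $H$ correspond precisely to pairwise-transverse families of halfspaces, of cardinality at most $r$. An infinite version of Dilworth's theorem (valid for any poset whose antichains have bounded size) then partitions $H = C_1 \sqcup \cdots \sqcup C_r$ into chains.

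For each $z \in I(x,y)$ and each $i$, set $\phi_i(z) := \nu(\s_z \cap C_i)$ and $\phi(z) := (\phi_1(z), \ldots, \phi_r(z)) \in \R^r$. The crucial geometric observation, used repeatedly, is that any halfspace separating two points of $I(x,y)$ must also separate $x$ from $y$: if $x$ and $y$ lay on the same side, convexity would force all of $I(x,y)$ into that halfspace, contradicting the separation. This has two consequences. First, for $z \in I(x,y)$, one has $\s_z \cap H = \mscr{H}(x|z)$, a halfspace-interval (hence $\mscr{B}$-measurable) that is manifestly an upset of $H$ and therefore of each chain $C_i$. Second, for $z_1, z_2 \in I(x,y)$ the set $(\s_{z_1} \triangle \s_{z_2}) \cap H$ contains exactly one halfspace from each wall separating $z_1$ from $z_2$; combined with the $\nu$-preserving involution $*$, this gives $\nu\bigl((\s_{z_1} \triangle \s_{z_2}) \cap H\bigr) = d(z_1, z_2)$. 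Since $\s_{z_1} \cap C_i$ and $\s_{z_2} \cap C_i$ are nested upsets of a chain, one obtains
\[|\phi_i(z_1) - \phi_i(z_2)| = \nu\bigl((\s_{z_1} \triangle \s_{z_2}) \cap C_i\bigr),\]
and summing over $i$ yields $\|\phi(z_1) - \phi(z_2)\|_{\ell^1} = d(z_1, z_2)$, so that $\phi$ is an $\ell^1$-isometric embedding.

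The main obstacle I foresee is measurability: the Dilworth chains come from Zorn's lemma and need not be $\mscr{B}$-measurable, so $\nu(\s_z \cap C_i)$ may not be defined a priori. I would sidestep this with a finite approximation rather than try to force measurability directly. Every finite $F \subset I(x,y)$ generates a finite median subalgebra of rank at most $r$, to which the classical finite Dilworth theorem applies, yielding an $\ell^1$-isometric embedding $\langle F\rangle \hookrightarrow \R^r$. Fixing the image of $x$ at the origin and applying a diagonal/compactness argument over an exhausting countable dense sequence in $I(x,y)$ produces a consistent isometric embedding of that dense subset into $\R^r$; since $I(x,y)$ is closed in the complete space $X$, this embedding extends by uniform continuity to all of $I(x,y)$, giving the desired isometric embedding into $(\R^r, \ell^1)$.
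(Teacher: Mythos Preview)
Your overall strategy matches the paper's: reduce to finite median subalgebras via Dilworth, build an $\ell^1$-embedding there using the halfspace measure, and then pass to a limit. The paper does exactly this, and even flags the same measurability obstacle you identified in your direct approach.

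There is, however, a genuine gap in your limiting step. You invoke ``an exhausting countable dense sequence in $I(x,y)$'' and then a diagonal argument, but separability of $I(x,y)$ is not available at this point in the paper---it is a \emph{consequence} of the proposition you are proving (once $I(x,y)$ embeds isometrically in $\R^r$, it is separable; beforehand nothing rules out non-separable intervals, and the paper explicitly cares about non-separable median spaces such as asymptotic cones). So your argument is circular as written.

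The paper sidesteps this by working with the directed set $\mf{M}$ of \emph{all} finite subalgebras of $I(x,y)$ containing $\{x,y\}$, extending each finite embedding $f_i^M$ to a map $\wt f_i^M\colon X\to[0,d(x,y)]$ (zero off $M$), and then using Tychonoff compactness of $[0,d(x,y)]^X$ to extract a convergent \emph{subnet}. Cofinality of the subnet guarantees that for any fixed pair $u,v\in I(x,y)$ the terms eventually contain both, so the isometry identity survives in the limit. Replacing your diagonal-over-a-dense-sequence step with this net/Tychonoff argument closes the gap without further changes.
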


In particular, we obtain the following useful fact (for connected median spaces, also see Corollary~1.3 in \cite{Bow2}).

\begin{cor}\label{intervals are compact new}
In a complete, finite rank median space intervals are compact.
\end{cor}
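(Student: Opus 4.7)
The plan is to deduce compactness directly from Proposition~\ref{intervals are Euclidean}. Fix $x,y\in X$ and let $\phi\colon I(x,y)\hookrightarrow\R^r$ be the isometric embedding (with $\R^r$ carrying the $\ell^1$ metric) provided by that proposition.

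First I would verify that $I(x,y)$ is complete as a metric space. By continuity of the distance function, the interval is the zero set of $z\mapsto d(x,z)+d(z,y)-d(x,y)$, so it is closed in $X$; completeness of $X$ then makes $I(x,y)$ a complete metric space. Transporting this through the isometry $\phi$, the image $\phi(I(x,y))$ is complete in $\R^r$, hence closed.

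Next I would observe boundedness of the image: for every $z\in I(x,y)$ the isometry gives $\|\phi(z)-\phi(x)\|_1=d(z,x)\leq d(x,y)$, so $\phi(I(x,y))$ is contained in the closed $\ell^1$-ball of radius $d(x,y)$ around $\phi(x)$. Applying Heine--Borel in the finite-dimensional space $\R^r$ shows $\phi(I(x,y))$ is compact, hence $I(x,y)$ is compact as well.

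All the substance of the corollary sits inside Proposition~\ref{intervals are Euclidean}; once that isometric embedding is in hand, there is no real obstacle left, only the routine verifications of closedness in $X$ and boundedness of the image in $\R^r$ sketched above.
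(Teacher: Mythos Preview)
Your argument is correct and matches the paper's intent: the paper states the corollary with no proof beyond ``In particular,'' treating it as immediate from Proposition~\ref{intervals are Euclidean}, and your write-up simply spells out the routine closed-plus-bounded-in-$\R^r$ verification that the reader is expected to supply.
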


To prove Proposition~\ref{intervals are Euclidean}, observe that, if $M$ is a median algebra and $\s_1,\s_2\cu\mscr{H}(M)$ are ultrafilters, antichains in the poset $\s_1\setminus\s_2$ correspond to sets of pairwise-transverse halfspaces and thus have cardinality bounded above by $\text{rank}(M)$. Hence, Dilworth's Theorem \cite{Dilworth} yields the following:

\begin{lem}\label{Dilworth for differences}
Let $M$ be a median algebra with $\text{rank}(M)=r<+\infty$ and let $\s_1,\s_2$ be ultrafilters on $\mscr{H}$. We can decompose $\s_1\setminus\s_2=\mc{C}_1\sqcup ... \sqcup\mc{C}_k$, where $k\leq r$ and each $\mc{C}_i$ is nonempty and totally ordered by inclusion.
\end{lem}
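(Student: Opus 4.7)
The plan is to verify the claim sketched in the paragraph preceding the lemma, namely that antichains in $\s_1\setminus\s_2$ (ordered by inclusion inherited from $\mscr{H}$) are exactly sets of pairwise-transverse halfspaces, and then invoke Dilworth's theorem.

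First I would check that if $\mf{h},\mf{k}\in\s_1\setminus\s_2$ form an antichain, i.e.\ they are incomparable under inclusion, then they are transverse. The four relevant intersections are handled as follows. Since $\s_1$ is an ultrafilter (hence in particular a partial filter), any two of its elements intersect, so $\mf{h}\cap\mf{k}\neq\emptyset$. Because ultrafilters pick exactly one halfspace from each wall, the hypothesis $\mf{h},\mf{k}\notin\s_2$ forces $\mf{h}^*,\mf{k}^*\in\s_2$, and applying the same observation to $\s_2$ gives $\mf{h}^*\cap\mf{k}^*\neq\emptyset$. For the remaining two intersections, $\mf{h}\cap\mf{k}^*=\emptyset$ would mean $\mf{h}\subseteq\mf{k}$ and $\mf{h}^*\cap\mf{k}=\emptyset$ would mean $\mf{k}\subseteq\mf{h}$; both are excluded by incomparability. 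Hence all four intersections are nonempty, which is precisely the definition of transversality for halfspaces recalled in Section~\ref{median algebras}.

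Next, by Proposition~6.2 of \cite{Bow1} (quoted in Section~\ref{median algebras}), the rank $r$ of $M$ equals the maximal cardinality of a set of pairwise-transverse halfspaces. Combined with the previous paragraph, every antichain in the poset $\s_1\setminus\s_2$ has cardinality at most $r$.

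Finally, Dilworth's theorem \cite{Dilworth} asserts that any poset whose antichains have cardinality at most $r$ decomposes as a disjoint union of at most $r$ chains. Applying this to $\s_1\setminus\s_2$ yields the desired decomposition $\s_1\setminus\s_2=\mc{C}_1\sqcup\dots\sqcup\mc{C}_k$ with $k\leq r$; discarding any empty $\mc{C}_i$ we may assume each is nonempty. The only subtlety worth double-checking is the direction of the implication in the transversality step, but this is immediate from the definitions, so I do not expect any genuine obstacle.
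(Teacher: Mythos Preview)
Your proposal is correct and follows exactly the approach the paper sketches in the paragraph preceding the lemma: you verify that antichains in $\s_1\setminus\s_2$ consist of pairwise-transverse halfspaces, bound their size by the rank via Proposition~6.2 of \cite{Bow1}, and then apply Dilworth's theorem. The only addition is that you spell out the four cases of the transversality check, which the paper leaves implicit.
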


Note that, in general, we have no guarantee that the chains provided by the previous lemma are measurable.

\begin{proof}[Proof of Proposition~\ref{intervals are Euclidean}.]
Assume that $X=I(x,y)$ for simplicity. We will produce maps $f_1,...,f_r\colon X\ra [0,d(x,y)]$ such that for every $u,v\in X$ we have $d(u,v)=\sum\left|f_i(u)-f_i(v)\right|$. 

We first do this under the assumption that $X$ be finite. By Lemma~\ref{Dilworth for differences}, we can decompose $\mscr{H}(x|y)=\mc{C}_1\sqcup ... \sqcup\mc{C}_r$, where each $\mc{C}_i$ is a finite set that is totally ordered by inclusion. Let $\nu$ be the measure on $\mscr{H}$ that is provided by Theorem~\ref{median->halfspaces}. Since $X$ is finite, the singletons of $\mscr{H}$ are halfspace-intervals (see e.g.\ Section~3 in \cite{Bow4}), hence measurable; thus, each $\mc{C}_i$ is measurable. For $z\in X$ and $1\leq i\leq r$, we set $f_i(z):=\nu\left(\mc{C}_i\cap\mscr{H}(x|z)\right)$. It is straightforward to check that these define the required embedding.

In the general case, let $\mf{M}$ be the set of finite subalgebras of $X$ containing $\{x,y\}$. Every finite subset of $X$ is contained in an element of $\mf{M}$ by Lemma~4.2 in \cite{Bow1}; in particular, $\left(\mf{M},\cu\right)$ is a directed set. Every $M\in\mf{M}$ is a finite interval with endpoints $x,y$, so the previous discussion yields maps $f_1^M,...,f_r^M\colon M\ra [0,d(x,y)]$ defining an isometric embedding $M\hookrightarrow\R^r$. We can extend each $f_i^M$ to a function $\wt{f}_i^M\colon X\ra [0,d(x,y)]$ that takes $X\setminus M$ to zero. This defines nets $P_i\colon\mf{M}\longrightarrow [0,d(x,y)]^X$. The space $[0,d(x,y)]^X$ is compact by Tychonoff's Theorem, thus a subnet of $P_i$ converges. Its limit is a function $f_i\colon X\ra [0,d(x,y)]$ and it is immediate to check that $f_1,...,f_r$ yield the required embedding $X\hookrightarrow\R^r$.
\end{proof}

We now proceed to examine various properties of the halfspaces of $X$.

\begin{prop}\label{boundaries of halfspaces}
If $X$ has finite rank and $\mf{h}\in\mscr{H}$, either $\partial\mf{h}:=\overline{\mf{h}}\cap\overline{\mf{h}^*}$ is empty or it is a closed, convex subset with $\text{rank}(\partial\mf{h})\leq\text{rank}(X)-1$.
\end{prop}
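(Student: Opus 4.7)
Closedness of $\partial\mf{h}=\overline{\mf{h}}\cap\overline{\mf{h}^*}$ is immediate as the intersection of closed sets. For convexity, I rely on the fact that the median map on $X$ is $1$-Lipschitz and hence continuous, so the closure of a convex set is convex; $\overline{\mf{h}}$ and $\overline{\mf{h}^*}$ are therefore convex, and so is their intersection.

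For the rank bound, the plan is to lift pairwise-transverse halfspaces of $\partial\mf{h}$ to $X$ and augment them by $\mf{h}$ itself. Since $X$ is complete of finite rank, intervals are compact by Corollary~\ref{intervals are compact new}, and hence closed convex subsets of $X$ are gate-convex by Lemma~\ref{gate-convexity vs compact intervals}; in particular $\partial\mf{h}$ is gate-convex. Proposition~\ref{walls in convex} then identifies the halfspaces of $\partial\mf{h}$ with the halfspaces of $X$ crossing $\partial\mf{h}$ via an inclusion-preserving bijection, and hence pairwise-transverse halfspaces $\mf{k}_1,\ldots,\mf{k}_s\in\mscr{H}(\partial\mf{h})$ (with $s:=\text{rank}(\partial\mf{h})$) lift to pairwise-transverse halfspaces $\td{\mf{k}}_1,\ldots,\td{\mf{k}}_s\in\mscr{H}(X)$. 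It will then suffice to show that $\mf{h}$ is transverse to each $\td{\mf{k}}_i$, which will yield $s+1\leq\text{rank}(X)$.

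To verify transversality of $\mf{h}$ with $\td{\mf{k}}_i$, I would pick $p\in\td{\mf{k}}_i\cap\partial\mf{h}$ and $q\in\td{\mf{k}}_i^*\cap\partial\mf{h}$; both lie in $\overline{\mf{h}}\cap\overline{\mf{h}^*}$. By Proposition~B each halfspace is either open or closed, and after possibly relabeling $\td{\mf{k}}_i\leftrightarrow\td{\mf{k}}_i^*$ I may assume $\td{\mf{k}}_i$ is open. Then $\td{\mf{k}}_i$ is an open neighbourhood of $p$, so approximating $p$ from $\mf{h}$ and from $\mf{h}^*$ shows that $\mf{h}\cap\td{\mf{k}}_i$ and $\mf{h}^*\cap\td{\mf{k}}_i$ are non-empty. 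Proposition~B also implies that $\partial\mf{h}$ lies in whichever of $\mf{h},\mf{h}^*$ is closed, so $q$ populates one of the intersections with $\td{\mf{k}}_i^*$ for free. The hard part will be producing the fourth intersection when both $\mf{h}$ and $\td{\mf{k}}_i$ are open; the plan is to choose $q$ in the relative interior (inside $\partial\mf{h}$) of the halfspace $\td{\mf{k}}_i^*\cap\partial\mf{h}$, so that sequences in the open halfspace $\mf{h}^*$ of $X$ approximating $q$ are forced to lie eventually inside the closed halfspace $\td{\mf{k}}_i^*$.
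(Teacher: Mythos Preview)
Your argument has a circularity: you invoke Proposition~B (that every halfspace is open or closed) to verify transversality, but in the paper Proposition~B is Corollary~\ref{open/closed}, whose proof is an induction on rank that \emph{uses} Proposition~\ref{boundaries of halfspaces}---specifically, it applies the inductive hypothesis to $\partial\mf{h}$, which requires knowing beforehand that $\text{rank}(\partial\mf{h})<\text{rank}(X)$. So you cannot appeal to Proposition~B here.

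Even setting circularity aside, the transversality step is not complete. Knowing only that $\td{\mf{k}}_i\cap\partial\mf{h}$ and $\td{\mf{k}}_i^*\cap\partial\mf{h}$ are nonempty does not, by itself, give you points in all four of $\mf{h}\cap\td{\mf{k}}_i$, $\mf{h}\cap\td{\mf{k}}_i^*$, $\mf{h}^*\cap\td{\mf{k}}_i$, $\mf{h}^*\cap\td{\mf{k}}_i^*$: a sequence in $\mf{h}$ converging to $p\in\td{\mf{k}}_i\cap\partial\mf{h}$ need not enter $\td{\mf{k}}_i$ unless you already know $\td{\mf{k}}_i$ is open. Your ``relative interior'' fix for the last intersection is also loosely stated (if $\mf{h}$ is open then $\mf{h}^*$ is closed, so you would want sequences in $\mf{h}$, not $\mf{h}^*$) and it is not clear why approaching $q$ from inside $\mf{h}$ forces the sequence into $\td{\mf{k}}_i^*$, since those points are not in $\partial\mf{h}$ at all.

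The paper does not attempt a self-contained proof; it simply observes that finite rank implies local convexity (Lemma~\ref{locally convex}) and then cites Lemma~7.5 of \cite{Bow1}, where Bowditch proves directly, using local convexity, that the frontier of a halfspace has strictly smaller rank. If you want a direct argument, the natural route is to start from an $r$-hypercube embedded in $\partial\mf{h}$ and use a small convex neighbourhood (from local convexity) to thicken it to an $(r{+}1)$-hypercube in $X$, rather than trying to lift halfspaces and invoke the open/closed dichotomy.
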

\begin{proof}
Follows from Lemma~\ref{locally convex} above and Lemma~7.5 in \cite{Bow1}. 
\end{proof}

We remark that, in a median space, closures of convex sets are convex, but interiors of convex sets need not be; in particular, the closure of a halfspace needs not be a halfspace. For instance, consider a real tree $T$, a branch point $x\in T$ and a connected component $\mf{h}$ of $T\setminus\{x\}$. This is a halfspace of $T$, as both $\mf{h}$ and $\mf{h}^*$ are convex. The interior of $\mf{h}^*$, however, is not convex, as it coincides with $\mf{h}^*\setminus\{x\}$. In particular, $\overline{\mf{h}}=\mf{h}\cup\{x\}$ is not a halfspace. 

Nevertheless, we have the following:

\begin{cor}\label{open/closed}
In a complete, finite rank median space, each halfspace is either open or closed (possibly both).
\end{cor}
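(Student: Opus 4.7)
The plan is to induct on $r := \text{rank}(X)$, using Proposition~\ref{boundaries of halfspaces} to drop the rank when passing to $\partial\mf{h}$. The key reformulation is that $\overline{\mf{h}} = \mf{h}\cup(\partial\mf{h}\cap\mf{h}^*)$, so $\mf{h}$ is closed if and only if $\partial\mf{h}\subseteq\mf{h}$, and symmetrically $\mf{h}$ is open if and only if $\partial\mf{h}\subseteq\mf{h}^*$. Thus it suffices to show that $\partial\mf{h}$ lies entirely on one side of the wall $\{\mf{h},\mf{h}^*\}$.

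If $r\leq 1$ then Proposition~\ref{boundaries of halfspaces} forces $\text{rank}(\partial\mf{h})\leq 0$, so $\partial\mf{h}$ is empty or a single point and the claim is immediate. For $r\geq 2$ I would dispatch the case $\partial\mf{h}=\emptyset$ first and otherwise note that $\partial\mf{h}$ is a closed, convex subset of rank $<r$, hence itself a complete, finite rank median space on which the inductive hypothesis applies. Setting $\mf{k}:=\mf{h}\cap\partial\mf{h}$, assume for contradiction that both $\mf{k}$ and $\partial\mf{h}\setminus\mf{k}=\mf{h}^*\cap\partial\mf{h}$ are nonempty; these two complementary convex subsets then form a halfspace of $\partial\mf{h}$. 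By induction, $\mf{k}$ is either open or closed in $\partial\mf{h}$; up to swapping $\mf{h}\leftrightarrow\mf{h}^*$, assume $\mf{k}$ is open in $\partial\mf{h}$.

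To close the argument, pick $z\in\mf{k}$. Since $z\in\partial\mf{h}\subseteq\overline{\mf{h}^*}$, choose a sequence $z_n\in\mf{h}^*$ with $z_n\to z$. Because $\partial\mf{h}$ is closed and convex in the complete median space $X$, it is gate-convex; let $p\colon X\to\partial\mf{h}$ be the gate-projection. Being $1$-Lipschitz and fixing $z$, it satisfies $p(z_n)\to z$. Picking any $w\in\mf{h}^*\cap\partial\mf{h}$, the gate property gives $p(z_n)\in I(z_n,w)$, and convexity of $\mf{h}^*$ forces $I(z_n,w)\subseteq\mf{h}^*$, so $p(z_n)\in\mf{h}^*\cap\partial\mf{h}=\partial\mf{h}\setminus\mf{k}$. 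Hence $z\in\mf{k}$ is a limit, within $\partial\mf{h}$, of points of $\partial\mf{h}\setminus\mf{k}$, contradicting openness of $\mf{k}$ in $\partial\mf{h}$. The one step to verify carefully is that $p(z_n)$ lands in $\mf{h}^*$ (this is precisely where the nonemptiness of $\mf{h}^*\cap\partial\mf{h}$ is used); once that is in hand the induction closes without further obstacle.
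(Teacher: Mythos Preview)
Your proof is correct and follows essentially the same inductive strategy as the paper: reduce to $\partial\mf{h}$ via Proposition~\ref{boundaries of halfspaces}, apply the inductive hypothesis to the halfspace $\mf{h}\cap\partial\mf{h}$ of $\partial\mf{h}$, and use the gate-projection to $\partial\mf{h}$ to transfer the conclusion back to $X$. The only cosmetic difference is in the last step: the paper invokes Proposition~\ref{walls in convex} to write $\mf{h}=\pi_{\partial\mf{h}}^{-1}(\mf{h}\cap\partial\mf{h})$ and then uses continuity of $\pi_{\partial\mf{h}}$, whereas you carry this out by hand with a sequence and the gate property --- the content is identical.
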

\begin{proof}
We proceed by induction on rank$(X)$; if the rank is zero, $\mscr{H}=\emptyset$ and there is nothing to prove. Now assume the result for all median spaces of rank at most $\text{rank}(X)-1$ and suppose $\mf{h}\in\mscr{H}(X)$ is neither open nor closed. Then $\partial\mf{h}$ is nonempty and we have a partition $\partial\mf{h}=\left(\partial\mf{h}\cap\mf{h}\right)\sqcup\left(\partial\mf{h}\cap\mf{h}^*\right)$. By Helly's Theorem, the convex set $\partial\mf{h}\cap\mf{h}=\overline{\mf{h}}\cap\overline{\mf{h}^*}\cap\mf{h}$ is nonempty; the same argument yields $\partial\mf{h}\cap\mf{h}^*\neq\emptyset$. The above partition of $\partial\mf{h}$ must then arise from a halfspace of $\partial\mf{h}$ and the inductive hypothesis guarantees that $\partial\mf{h}\cap\mf{h}$ is either open or closed. Note that $\mf{h}=\pi_{\partial\mf{h}}^{-1}\left(\partial\mf{h}\cap\mf{h}\right)$, by Proposition~\ref{walls in convex}. Since $\pi_{\partial\mf{h}}$ is continuous, $\mf{h}$ is either open or closed, a contradiction.
\end{proof}

The situation can be completely different in infinite rank median spaces:

\begin{ex}\label{dense halfspaces in L^1}
The space $X=L^1([0,1])$ is complete, median and all its halfspaces are dense. In order to see the latter, let us write $B_R$ for the $R$-ball around the origin and let us denote characteristic functions by $\chi_{\cdot}$. Given a function $f\in L^1([0,1])$ with $\|f\|_1<2R$, there exists a measurable partition $[0,1]=P\sqcup Q$ such that $\|f\cdot\chi_P\|_1<R$ and $\|f\cdot\chi_Q\|_1<R$. By part~1 of Example~\ref{L1 are median}, the interval between $f\cdot\chi_P$ and $f\cdot\chi_Q$ contains $f$. This shows that $B_{2R}$ is contained in the convex hull of $B_R$ for every $R>0$; in particular, the hull of $B_R$ is the entire $X$. Since any nonempty open subset of $X$ can be translated so that it contains a ball around the origin, we conclude that the hull of any nonempty open set is the entire $X$. Thus, every proper convex subset of $X$ must have empty interior and all halfspaces are dense.
\end{ex}

\noindent
{\bf Example~\ref{dense halfspaces in L^1}'.} Let us consider the compact topological median algebra $M=\{0,1\}^{\N}$ with the product topology; we identify $M$ with the power set $2^{\N}$ of $\N$. There is a one-to-one correspondence between walls of $M$ and ultrafilters\footnote{Here we consider the classical set-theoretical notion of ultrafilter, not the one introduced in Section~\ref{median algebras}. See e.g.\ Definition~10.12 in \cite{DK}.} $\mc{U}\cu 2^{\N}$. Indeed, observe that, given $A,B\cu\N$, we have $I(A,B)=\{C\cu\N\mid A\cap B\cu C\cu A\cup B\}$. Since $M=I(\emptyset,\N)$, every wall of $M$ has a side $\mf{h}\cu 2^{\N}$ containing $\N$ and a side $\mf{h}^*$ containing $\emptyset$. Since $A\cap B\in I(A,B)$, the collection $\mf{h}\cu 2^{\N}$ is closed under taking intersections. If $A\cu B$ and $A\in\mf{h}$, we have $B\in I(A,\N)\cu\mf{h}$. Finally, for every $A\cu\N$, the collection $\mf{h}$ contains exactly one among $A$ and $\N\setminus A$, as $M=I(A,\N\setminus A)$. This shows that $\mf{h}\cu 2^{\N}$ is an ultrafilter. Conversely, it is easy to check that every ultrafilter $\mc{U}\cu 2^{\N}$ is a halfspace of $M$.

Now, $M$ has some obvious walls coming from its product structure. The associated halfspaces can be described explicitly by setting one coordinate of $\{0,1\}^{\N}$ to $0$ or $1$; note that, in a finite product $\{0,1\}^n$, all halfspaces would be of this form. In terms of the correspondence established above, these walls are exactly \emph{principal} ultrafilters (see e.g.\ Definition~10.15 in \cite{DK}). However, it is a well-known consequence of the axiom of choice that there exist also \emph{non-principal} ultrafilters. These will correspond to additional walls of $M$, which --- unlike the previous ones --- yield dense halfspaces of $M$.

Endowing the product $X=\prod \{0,\tfrac{1}{n^2}\}$ with its $\ell^1$ metric, we obtain a compact median metric space. As a median algebra, this is isomorphic to $M$ above, which yields a natural correspondence between walls of $X$ and walls of $M$. The space $X$ is totally disconnected, but the walls of the \emph{geodesic} median space $Y=\prod [0,\tfrac{1}{n^2}]$ are not much better behaved: by Lemma~6.5 in \cite{Bow1}, every halfspace of $X$ is of the form $\mf{h}\cap Y$ for some $\mf{h}\in\mscr{H}(Y)$. We remark that --- unlike $L^1([0,1])$ --- $M$, $X$ and $Y$ are also locally convex.
\medskip

\begin{figure}
\centering
\includegraphics[width=2.4in]{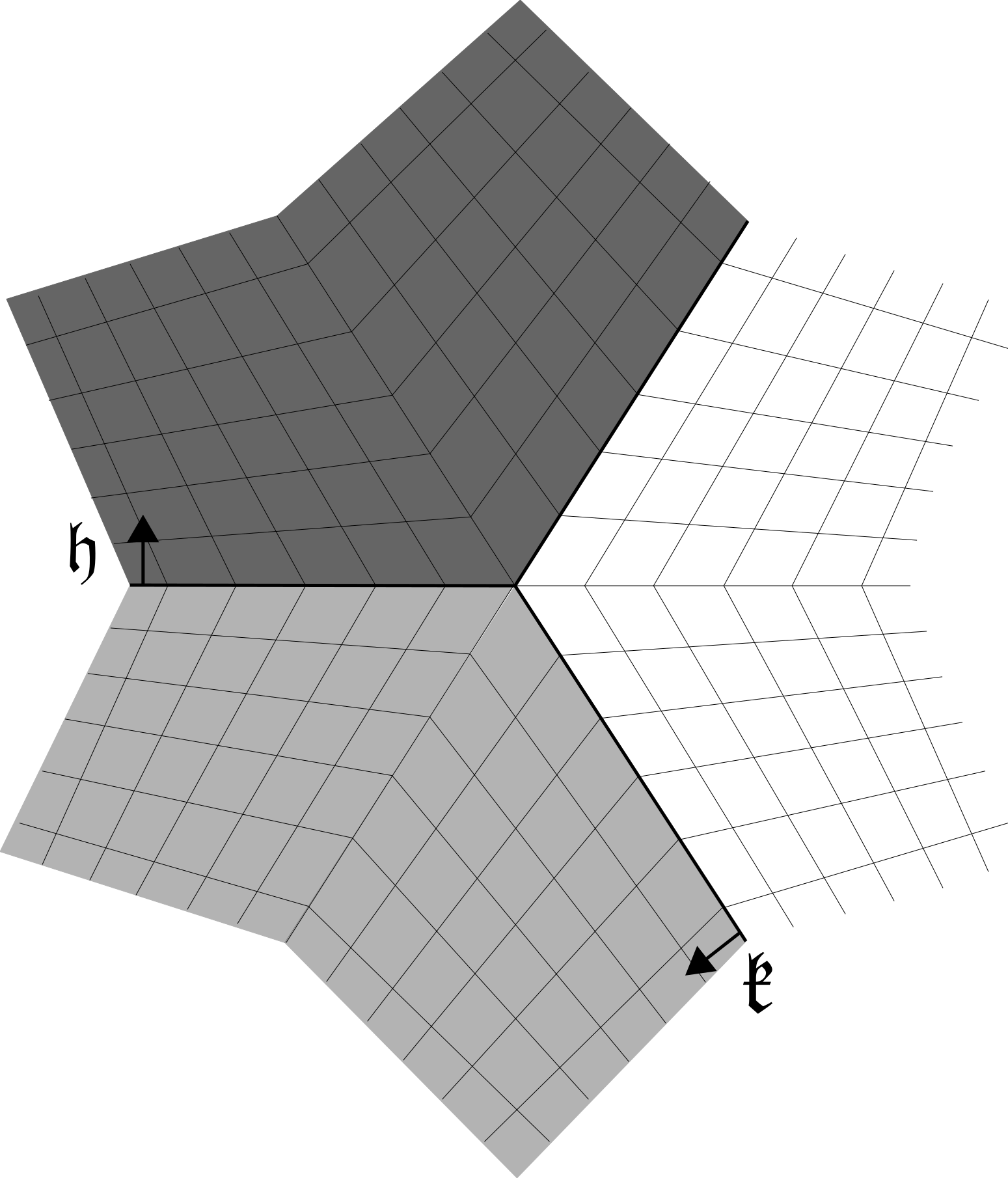}
\caption{}
\label{hexagonal CSC}
\end{figure}
Even in finite rank median spaces, walls can display more complicated behaviours than hyperplanes in ${\rm CAT}(0)$ cube complexes. Consider for instance the rank-two median space pictured in Figure~\ref{hexagonal CSC}; it is obtained by glueing together three halfplanes, each endowed with the $\ell^1$ metric. The pictured halfspaces ($\mf{h}$ is open and $\mf{k}$ is closed) satisfy $\mf{h}\subsetneq\mf{k}$, but $d(\mf{h},\mf{k}^*)=0$; indeed, $\mf{h}$ and $\mf{k}$ share a portion of their frontier isometric to a ray. 

Another pathology appears in the space $I$ in Figure~\ref{butterflies} below, which we view as a subset of $\R^2$ with the restriction of the $\ell^1$ metric. The halfspaces $\mf{h}$, $\mf{k}$ are open and $\mf{h}\subsetneq\mf{k}$, but $\overline{\mf{h}}\not\cu\mf{k}$.

These issues can easily be circumvented, at least in finite rank spaces; this is the content of Proposition~\ref{trivial chains of halfspaces} below.

\begin{lem}\label{strictly increasing distance to halfspaces}
Let $X=I(x,y)$ be complete and finite rank. If, for $\mf{h},\mf{k}\in\mscr{H}$, we have $y\in\mf{h}\cu\mf{k}$ and $d(x,\mf{h})=d(x,\mf{k})$, then $\overline{\mf{h}}=\overline{\mf{k}}$.  
\end{lem}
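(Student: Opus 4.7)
Since $\mf{h}\subseteq\mf{k}$ gives $\overline{\mf{h}}\subseteq\overline{\mf{k}}$ for free, the task is to show the reverse inclusion, and the natural route is by contradiction: assume $z\in\overline{\mf{k}}\setminus\overline{\mf{h}}$ and extract a halfspace that violates the convexity of $X=I(x,y)$. The key reservoir of information is the equality of distances $d(x,\mf{h})=d(x,\mf{k})$, which I plan to convert into an equality of gate-projections.

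\textbf{Setting up gate-projections.} By completeness, every closed convex subset of $X$ is gate-convex, so $\overline{\mf{h}}$ and $\overline{\mf{k}}$ admit gate-projections. Let $p:=\pi_{\overline{\mf{h}}}(x)$. Since the distance to a set equals the distance to its closure, $d(x,p)=d(x,\overline{\mf{h}})=d(x,\mf{h})=d(x,\mf{k})=d(x,\overline{\mf{k}})$. As $p$ lies in $\overline{\mf{h}}\subseteq\overline{\mf{k}}$ and realises the distance from $x$ to $\overline{\mf{k}}$, uniqueness of the gate forces $p=\pi_{\overline{\mf{k}}}(x)$. Applying Lemma~\ref{gates for pairs of convex sets 1} to the pairs $(\{x\},\overline{\mf{h}})$ and $(\{x\},\overline{\mf{k}})$ — both with pair of gates $(x,p)$ — yields
\[\mscr{H}(\{x\}\,|\,\overline{\mf{h}})=\mscr{H}(x\,|\,p)=\mscr{H}(\{x\}\,|\,\overline{\mf{k}}).\]
So the halfspaces separating $x$ from $\overline{\mf{h}}$ are exactly those separating $x$ from $\overline{\mf{k}}$.

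\textbf{Ruling out a point in $\overline{\mf{k}}\setminus\overline{\mf{h}}$.} Suppose there were $z\in\overline{\mf{k}}\setminus\overline{\mf{h}}$. Then $\{z\}$ and $\overline{\mf{h}}$ are disjoint convex subsets, so Roller's separation theorem yields $\mf{j}\in\mscr{H}$ with $z\in\mf{j}$ and $\overline{\mf{h}}\subseteq\mf{j}^*$. I now locate $x$ with respect to $\mf{j}$. If $x\in\mf{j}$, then $\mf{j}^*\in\mscr{H}(\{x\}\,|\,\overline{\mf{h}})=\mscr{H}(\{x\}\,|\,\overline{\mf{k}})$, forcing $\overline{\mf{k}}\subseteq\mf{j}^*$ and contradicting $z\in\overline{\mf{k}}\cap\mf{j}$. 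Hence $x\in\mf{j}^*$. On the other hand $y\in\mf{h}\subseteq\overline{\mf{h}}\subseteq\mf{j}^*$, and convexity of $\mf{j}^*$ then yields $X=I(x,y)\subseteq\mf{j}^*$, contradicting $z\in X\cap\mf{j}$.

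\textbf{Where the difficulty lies.} The argument is essentially a clean two-step manoeuvre — identify the gate-projections, then exploit convexity of $\mf{j}^*$ — and the only delicate point is recognising that the common gate $p$ allows one to transfer separating halfspaces from one closure to the other via Lemma~\ref{gates for pairs of convex sets 1}. Completeness is used once (to invoke gate-convexity of closed convex sets). The finite-rank hypothesis is not strictly needed in the contradiction itself, but is a standing hypothesis ensuring that the ambient tools (gate-convexity, separation, compactness of intervals via Corollary~\ref{intervals are compact new}) behave as expected, and is consistent with the subsequent application in Proposition~\ref{trivial chains of halfspaces}.
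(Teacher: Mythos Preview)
Your proof is correct. Both your argument and the paper's hinge on the same first move: the gate $p=\pi_{\overline{\mf{h}}}(x)$ coincides with $\pi_{\overline{\mf{k}}}(x)$ (the paper cites part~3 of Lemma~\ref{gates vs inclusions}; you derive it by hand from the distance equality). The divergence is only in the closing step. The paper finishes directly: for any $w\in\overline{\mf{k}}$ one has $p\in I(x,w)$ by the gate property, hence $w\in I(p,y)$ since $X=I(x,y)$; as $p,y\in\overline{\mf{h}}$ and $\overline{\mf{h}}$ is convex, $w\in\overline{\mf{h}}$. You instead convert the common gate into the equality $\mscr{H}(\{x\}\,|\,\overline{\mf{h}})=\mscr{H}(\{x\}\,|\,\overline{\mf{k}})$ via Lemma~\ref{gates for pairs of convex sets 1} and close by separation and a case split on where $x$ sits relative to a separating halfspace $\mf{j}$. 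The paper's route is shorter and avoids the separation theorem entirely; your route is perfectly valid but packages the same convexity/gate information in the language of halfspaces rather than intervals.
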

\begin{proof}
Observe that the gate-projections of $x$ to $\overline{\mf{k}}$ and $\overline{\mf{h}}$ coincide by part~3 of Lemma~\ref{gates vs inclusions}; we denote them by $z$. If $w\in\overline{\mf{k}}$, the sequence $xzwy$ is a discrete geodesic; thus, $w\in\overline{\mf{h}}$. This proves that $\overline{\mf{k}}\cu\overline{\mf{h}}$, while the other inclusion is obvious.
\end{proof}

The hypotheses of Lemma~\ref{strictly increasing distance to halfspaces} do not imply that $\mf{h}=\mf{k}$ or $\overline{\mf{h}}=\mf{k}$; see for instance $\mf{h}\cu\mf{k}$ in Figure~\ref{butterflies}.
\begin{figure}
\centering
\includegraphics[width=4.5in]{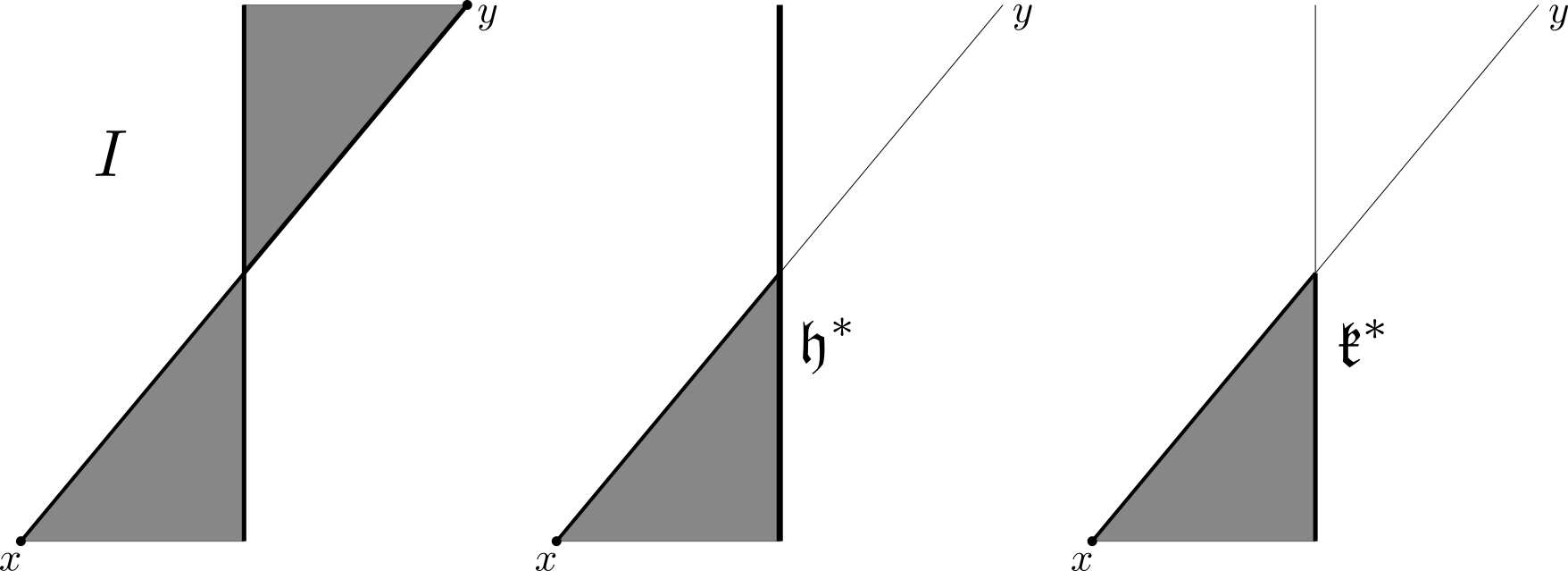}
\caption{}
\label{butterflies}
\end{figure}

\begin{prop}\label{trivial chains of halfspaces}
Let $X$ be complete of rank $r<+\infty$ and let $\mf{h}_1\supsetneq ... \supsetneq\mf{h}_k$ be a chain of halfspaces. 
\begin{enumerate}
\item If $d(\mf{h}_1^*,\mf{h}_k)=0$ and each $\mf{h}_i$ is open, then $k\leq r$. 
\item In general, if $d(\mf{h}_1^*,\mf{h}_k)=0$ we have $k\leq 2r$.
\item If there exists $x\in\mf{h}_1^*$ such that $d(x,\mf{h}_1)=d(x,\mf{h}_k)$, then $k\leq r+1$.
\end{enumerate}
\end{prop}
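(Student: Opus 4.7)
My plan is to prove the three parts in order, with part (1) as the engine and (2), (3) as reductions to it.

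For part (1), I will proceed by induction on $r = \mathrm{rank}(X)$; the base case $r = 0$ is vacuous. Given the inductive hypothesis and a chain of length $k$ of open halfspaces as in (1), Lemma~\ref{gates for pairs of convex sets 2} applied to the closed convex sets $\mf{h}_1^*$ and $\overline{\mf{h}_k}$ yields a point $p \in \mf{h}_1^* \cap \overline{\mf{h}_k}$, which monotonicity places in $\partial \mf{h}_i$ for every $i$. To descend in rank I will localise: fix any $y \in \mf{h}_k$ and pass to the compact interval $I := I(p,y)$, inside which each $\mf{h}_i' := \mf{h}_i \cap I$ is an open halfspace by Proposition~\ref{walls in convex} and the chain remains strict. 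Because $p \in \overline{\mf{h}_i'}^I$ by Lemma~\ref{pre-adjacency}, an iteration of Lemma~\ref{strictly increasing distance to halfspaces} applied to $I$ with endpoint $p$ forces the closures $\overline{\mf{h}_i'}^I$ to collapse to a single convex set $C$. Then the boundaries $\partial\mf{h}_i' = C \cap (\mf{h}_i')^*$ are closed proper halfspaces of $\partial \mf{h}_k'$, with open convex complements $\mf{h}_i' \cap \partial\mf{h}_k'$, and they form a strictly ascending chain of length $k-1$.

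I will complete the induction by applying the hypothesis to the complementary chain of $k-1$ open halfspaces inside $\partial \mf{h}_k'$, which has rank at most $r-1$ by Proposition~\ref{boundaries of halfspaces}. The distance-zero hypothesis for this new chain transfers cleanly: $p \in \partial \mf{h}_1'$, and Lemma~\ref{pre-adjacency} identifies $\overline{\mf{h}_{k-1}' \cap \partial \mf{h}_k'}$ with $\partial \mf{h}_k'$, so $p$ lies in its closure. The inductive hypothesis then yields $k - 1 \leq r - 1$.

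Part (2) will follow by splitting the chain into its open and closed members (Corollary~\ref{open/closed}): each subchain inherits the distance-zero hypothesis (directly for the open subchain, and after reversing and complementing for the closed one), so part (1) bounds each by $r$, giving $k \leq 2r$. For part (3), the tower property of gate projections (part~3 of Lemma~\ref{gates vs inclusions}) combined with the equality $d(x,\mf{h}_1)=d(x,\mf{h}_k)$ will force all gate projections $z_i := \pi_{\overline{\mf{h}_i}}(x)$ to coincide with a single point $z$, because the geodesic $x z_{i-1} z_i$ has constant total length. Picking $y \in \mf{h}_k$ and restricting to $I := I(x,y)$ (which contains $z$), the chain $\mf{h}_i' := \mf{h}_i \cap I$ is strict with $d_I(x, \mf{h}_i') = d(x,z)$ constant, so Lemma~\ref{strictly increasing distance to halfspaces} again collapses all $\overline{\mf{h}_i'}^I$ to a common $C$. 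A halfspace closed in $I$ must then equal $C$, so strictness forces at most one $\mf{h}_i'$ to be closed, and the descending structure pins this exception on $\mf{h}_1'$; Corollary~\ref{open/closed} applied in $I$ makes the remaining $k-1$ halfspaces open. They form an open chain in $I$ with common closure $C$, so the distance-zero hypothesis of part (1) holds by density of $\mf{h}_k'$ in $C$, and part (1) gives $k - 1 \leq \mathrm{rank}(I) \leq r$.

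The main technical obstacle I anticipate is ensuring the distance-zero hypothesis transfers at each reduction; in each case this reduces to a density argument inside a locally convex (hence finite rank) subspace, which Lemma~\ref{pre-adjacency} handles once the relevant basepoint is identified.
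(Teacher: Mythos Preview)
Your proof is correct and follows essentially the same approach as the paper: induction on rank for part~(1) by descending to $\partial\mf{h}_k$ (you do this inside an interval $I(p,y)$ and invoke Lemma~\ref{strictly increasing distance to halfspaces} to collapse the closures, whereas the paper works directly in $X$ and reserves that lemma for part~(3), but the mechanism is the same); the open/closed split for part~(2); and the reduction of part~(3) to part~(1) via Lemma~\ref{strictly increasing distance to halfspaces} in an interval. The only organisational difference is your extra localisation step in part~(1), which is harmless.
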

\begin{proof}
If $d(\mf{h}_1^*,\mf{h}_k)=0$, no $\mf{h}_i$ can simultaneously be open and closed or we would have $\overline{\mf{h}_1^*}\cap\overline{\mf{h}_k}\cu\overline{\mf{h}_i^*}\cap\overline{\mf{h}_i}=\mf{h}_i^*\cap\mf{h}_i=\emptyset$ and $d(\mf{h}_1^*,\mf{h}_k)>0$ by Lemma~\ref{gates for pairs of convex sets 2}.

We prove part~1 by induction on $r$; the case $r=0$ is trivial. If $r\geq 1$, observe that $C:=\partial\mf{h}_k$ is closed, convex and nonempty, since $\mf{h}_k$ is not closed. If $i\leq k-1$, we have $\mf{h}_i\supseteq\mf{h}_k$ and $\mf{h}_i\cap\mf{h}_k^*\neq\emptyset$, hence $\mf{h}_i\cap 
 C\neq\emptyset$ by Helly's Theorem. Similarly $\mf{h}_i^*\cap C\neq\emptyset$, since $\mf{h}_i^*\cu\mf{h}_k^*$ and since by assumption we have $\mf{h}_i^*\cap\overline{\mf{h}_k}\supseteq\mf{h}_1^*\cap\overline{\mf{h}_k}=\overline{\mf{h}_1^*}\cap\overline{\mf{h}_k}\neq\emptyset$.
 
Proposition~\ref{walls in convex} implies that $\mf{h}_i\cap C$ are a chain of distinct halfspaces of $C$, for $i\leq k-1$ and, by Proposition~\ref{boundaries of halfspaces}, the rank of $C$ is at most ${r-1}$. By Helly's Theorem and Lemma~\ref{gates for pairs of convex sets 2}, we have $\overline{\mf{h}_1^*}\cap\overline{\mf{h}_{k-1}}\cap C\neq\emptyset$ and, by Lemma~\ref{pre-adjacency}, the sets $\overline{\mf{h}_1^*\cap C}$ and $\overline{\mf{h}_{k-1}\cap C}$ intersect. We conclude by applying the inductive hypothesis.

Part~2 is immediate from part~1 and Corollary~\ref{open/closed}, by splitting the chain into a subchain of open halfspaces and a subchain of halfspaces with open complement. To prove part~3, pick a point $y\in\mf{h}_k$ and set $I:=I(x,y)$. Note that $d(x,\mf{h}_i)=d(x,\mf{h}_i\cap I)$, since $\pi_I$ is $1$-Lipschitz, fixes $x$ and maps $\mf{h}_i$ onto $\mf{h}_i\cap I$ by Lemma~\ref{gates vs inclusions}. An application of Lemma~\ref{strictly increasing distance to halfspaces} yields $\overline{\mf{h}_1\cap I}=\overline{\mf{h}_k\cap I}$; by Proposition~\ref{walls in convex}, the $\mf{h}_i\cap I$ are pairwise distinct so $\mf{h}_i\cap I$ cannot be closed if $i\geq 2$. Corollary~\ref{open/closed} and Proposition~\ref{walls in convex} imply that $\mf{h}_i$ is open for $i\geq 2$. Moreover, $\overline{\mf{h}_2^*\cap I}\cap\overline{\mf{h}_k\cap I}=\overline{\mf{h}_2^*\cap I}\cap\overline{\mf{h}_2\cap I}\neq\emptyset$, so $k-1\leq r$ by part~1.
\end{proof}

Each of the bounds in Proposition~\ref{trivial chains of halfspaces} is sharp. An example with $r=2$ is given by the median space in Figure~\ref{butterflies}. For part~1 one can consider the chain $\mf{h}\cu\mf{k}$ and $\mf{h}\cu\mf{k}\cu\overline{\mf{k}}$ for part~3; for part~2 one needs to add $(\tau\mf{h})^*$, where $\tau$ is the natural involution of $I$.

\begin{cor}\label{countable cofinal subsets}
Let $X$ be complete and finite rank. Every totally ordered subset $\mc{C}\cu\mscr{H}$ has a countable subset $\mc{C}_0\cu\mc{C}$ that is cofinal in $\mc{C}$.
\end{cor}
\begin{proof}
Pick a point $x\in X$ and define $\delta_x\colon\mc{C}\ra\R$ by $\delta_x(\mf{h}):=d(x,\mf{h}^*)$ if $x\in\mf{h}$ and $\delta_x(\mf{h}):=-d(x,\mf{h})$ if $x\in\mf{h}^*$. The map $\delta_x$ is monotone and, by part~3 of Proposition~\ref{trivial chains of halfspaces}, it has finite fibres. The image of $\delta_x$ is a separable metric space; let $A\cu\delta_x(\mc{C})$ be a countable dense subset. If $\delta_x(\mc{C})$ has a maximum or a minimum, add them to $A$ and set $\mc{C}_0:=\delta_x^{-1}(A)$. It is immediate to check that this is cofinal in $\mc{C}$ (upwards and downwards).
\end{proof}

\section{Measure theory on the pocset of halfspaces.}\label{measure theory}

\subsection{A finer $\s$-algebra on $\mscr{H}$.}\label{finer s-algebra}

Let $X$ be a median space. The $\s$-algebras introduced in Theorems~\ref{median->walls} and~\ref{median->halfspaces} are often too restrictive to work comfortably with. As an example, the ultrafilters $\s_x$, $x\in X$, need not be measurable, for instance when working with non-separable median spaces. The latter might look like pathological examples, but we remark that, on the contrary, an interesting class of finite rank median spaces arises from asymptotic cones of coarse median groups and these are rarely separable.

Since the measures constructed in Theorems~\ref{median->walls} and~\ref{median->halfspaces} originate from Carath\'eodory's construction, one might wish to consider instead the $\s$-algebras of additive sets for the outer measures $\mu^*$ and $\nu^*$. Unfortunately, even in simple examples, one might obtain measure spaces that are not semifinite. We choose a different path.

We say that a subset $E\cu\mscr{H}$ is \emph{morally measurable} if $E\cap\mscr{H}(x|y)$ lies in $\mscr{B}_0$ for all $x,y\in X$; here $\mscr{B}_0$ is the completion of $\mscr{B}$, see the end of Section~\ref{SMW}. Morally measurable sets form a $\s$-algebra $\wh{\mscr{B}}\supseteq\mscr{B}$. For every $z\in X$, the ultrafilter $\s_z\cu\mscr{H}$ is morally measurable since $\s_z\cap\mscr{H}(x|y)=\mscr{H}(x|m)$, where $m=m(x,y,z)$. 

We say that a morally measurable subset $E\cu\mscr{H}$ is \emph{morally null} if $\nu\left(E\cap\mscr{H}(x|y)\right)=0$ for all $x,y\in X$. In particular, subsets of morally null sets are morally null. Given a morally measurable set $E$, we define
\[\wh{\nu}(E):=\sup\left\{\sum_{i\in I}\nu\left(E\cap\mscr{H}(x_i|y_i)\right)~\middle | ~ \bigsqcup_{i\in I}\mscr{H}(x_i|y_i)\cu\mscr{H}\right\}.\]
We allow $I$ to be of any cardinality, although restricting to finite or countable index sets would not affect the value of $\wh{\nu}$. A morally measurable set $E$ is morally null if and only if $\wh{\nu}(E)=0$.

We recall that a measure space $(\Om,\mu)$ is \emph{semifinite} if every measurable subset $E\cu\Om$ with $\mu(E)=+\infty$ contains a measurable subset $F\cu E$ with $0<\mu(F)<+\infty$.

\begin{prop}\label{the semifinite measure}
The triple $\left(\mscr{H},\wh{\mscr{B}},\wh{\nu}\right)$ is a semifinite measure space.
\end{prop}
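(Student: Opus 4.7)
The plan is to verify in turn that $\wh{\mscr{B}}$ is a $\sigma$-algebra, that $\wh{\nu}$ is a measure on it, and that the resulting space is semifinite. The first assertion is immediate: since $\mscr{B}_0$ is a $\sigma$-algebra and the operation $E \mapsto E \cap \mscr{H}(x|y)$ commutes with complements and countable unions, morally measurable sets form a $\sigma$-algebra.

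A key preliminary observation is that whenever $F \in \mscr{B}_0$ is contained in a halfspace-interval $\mscr{H}(x|y)$, we have $\wh{\nu}(F) = \nu(F)$. Indeed, for any disjoint family $\{\mscr{H}(x_i|y_i)\}_{i\in I}$, the intersections $F \cap \mscr{H}(x_i|y_i)$ are pairwise disjoint subsets of $F$, so
\[ \sum_i \nu(F \cap \mscr{H}(x_i|y_i)) = \nu\Big(F \cap \bigsqcup_i \mscr{H}(x_i|y_i)\Big) \leq \nu(F), \]
and the reverse inequality is witnessed by the singleton family $\{\mscr{H}(x|y)\}$. Monotonicity of $\wh{\nu}$ on $\wh{\mscr{B}}$ is equally clear.

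The heart of the proof is $\sigma$-additivity. Let $\{E_n\}$ be a disjoint sequence in $\wh{\mscr{B}}$ with union $E$. For the inequality $\wh{\nu}(E) \leq \sum_n \wh{\nu}(E_n)$, I restrict to countable disjoint families (uncountable ones contribute only countably many nonzero terms anyway) and apply Tonelli:
\[ \sum_i \nu(E \cap \mscr{H}(x_i|y_i)) = \sum_n \sum_i \nu(E_n \cap \mscr{H}(x_i|y_i)) \leq \sum_n \wh{\nu}(E_n). \]
For the reverse inequality, it suffices by monotonicity and a finite-$N$ approximation to prove the super-additivity $\wh{\nu}(E_1 \sqcup E_2) \geq \wh{\nu}(E_1) + \wh{\nu}(E_2)$. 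Given near-optimal disjoint families $\mscr{F}_1, \mscr{F}_2$ realising $\wh{\nu}(E_1), \wh{\nu}(E_2)$ up to $\varepsilon$, the task is to combine them into a single disjoint family of halfspace-intervals whose total contribution against $E_1\sqcup E_2$ is close to the sum. The essential tool would be the splitting identity $\mscr{H}(x|y) = \mscr{H}(x|m) \sqcup \mscr{H}(m|y)$, valid for any $m \in I(x,y)$, together with the pocset structure on $\mscr{H}$ induced by the finite median-subalgebra hull of the endpoints of intervals in $\mscr{F}_1 \cup \mscr{F}_2$; Lemma~\ref{gates for pairs of convex sets 1} allows one to replace overlapping pairs by disjoint refinements with controlled loss of mass.

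Finally, semifiniteness is a direct consequence of the definition once $\wh{\nu}$ is known to be a measure: if $\wh{\nu}(E) > 0$, the sup exhibits some term $\nu(E \cap \mscr{H}(x|y)) > 0$, so the set $F := E \cap \mscr{H}(x|y)$ lies in $\mscr{B}_0 \subset \wh{\mscr{B}}$ and satisfies $0 < \wh{\nu}(F) = \nu(F) \leq d(x,y) < \infty$ by the preliminary observation. The main obstacle, and the only non-routine step, is the combining of disjoint families in the proof of $\sigma$-additivity: producing a common disjoint refinement of finitely many families of halfspace-intervals, losing only negligible mass, which must exploit the median-algebra structure on $X$ in an essential way.
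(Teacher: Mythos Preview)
Your overall architecture matches the paper's: reduce to $\sigma$-subadditivity plus finite (super)additivity, handle subadditivity by Tonelli, and deduce semifiniteness from the definition. The subadditivity and semifiniteness paragraphs are fine, and your preliminary observation that $\wh\nu = \nu$ on subsets of a single halfspace-interval is correct (it reappears in the paper as part of Lemma~\ref{properties of nu hat}).

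The gap is exactly where you flag it: combining two near-optimal disjoint families $\mscr{F}_1,\mscr{F}_2$ into a single disjoint family of halfspace-intervals. Your sketch here is vague and cites the wrong tool. Lemma~\ref{gates for pairs of convex sets 1} concerns pairs of gates for gate-convex sets and does not by itself produce refinements of halfspace-intervals; and speaking of ``controlled'' or ``negligible'' loss of mass suggests you are missing the clean combinatorial fact. What is actually needed---and what the paper isolates as Lemma~\ref{countable disjoint unions of intervals}---is that \emph{any finite union of halfspace-intervals is exactly a finite disjoint union of halfspace-intervals}, with no mass lost at all. This follows because the set-difference of two halfspace-intervals is itself a finite disjoint union of halfspace-intervals (a fact from \cite{CDH}, ultimately a consequence of identities like $\mscr{H}(x|y)\setminus\mscr{H}(u|v)=\mscr{H}(x|m(x,y,u))\sqcup\mscr{H}(m(x,y,v)|y)$). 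Once you have this, you rewrite $\bigcup\mscr{F}_1\cup\bigcup\mscr{F}_2$ as a disjoint union $\bigsqcup_k\mscr{H}(w_k|z_k)$ and obtain
\[
\wh\nu(E_1)+\wh\nu(E_2)-2\varepsilon \leq \sum_k \nu\bigl((E_1\sqcup E_2)\cap\mscr{H}(w_k|z_k)\bigr)\leq \wh\nu(E_1\sqcup E_2),
\]
exactly as in the paper. So your strategy is right, but the missing ingredient is this exact decomposition lemma rather than an approximate one.
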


We prove Proposition~\ref{the semifinite measure} below; the nontrivial part is showing that $\wh{\nu}$ is a measure. Since differences of halfspace-intervals are finite disjoint unions of halfspace-intervals (see \cite{CDH}), the following is an easy observation.

\begin{lem}\label{countable disjoint unions of intervals}
Any finite (countable) union of halfspace-intervals is a finite (countable) \emph{disjoint} union of halfspace-intervals.
\end{lem}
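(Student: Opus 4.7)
The plan is to use the cited fact from \cite{CDH} (that $\mscr{H}(x|y)\setminus\mscr{H}(x'|y')$ is a finite disjoint union of halfspace-intervals) together with the standard telescoping trick $\bigcup_n A_n=\bigsqcup_n\left(A_n\setminus\bigcup_{i<n}A_i\right)$.

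First I would upgrade the cited fact to the following strengthened form: if $A$ is a halfspace-interval and $B_1,\dots,B_k$ are halfspace-intervals, then $A\setminus(B_1\cup\dots\cup B_k)$ is a finite disjoint union of halfspace-intervals. This is proved by induction on $k$. The base case $k=1$ is exactly the fact from \cite{CDH}. For the inductive step, write $A\setminus(B_1\cup\dots\cup B_{k-1})=C_1\sqcup\dots\sqcup C_m$ as a finite disjoint union of halfspace-intervals. Then
\[A\setminus(B_1\cup\dots\cup B_k)=\bigsqcup_{j=1}^m\left(C_j\setminus B_k\right),\]
and each $C_j\setminus B_k$ is, by the base case, a finite disjoint union of halfspace-intervals; taking the disjoint union over $j$ finishes the step.

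Next I would handle the finite and countable cases uniformly. Given halfspace-intervals $\{A_n\}_{n\in\N}$ (or indexed by a finite set), set $D_1:=A_1$ and $D_n:=A_n\setminus(A_1\cup\dots\cup A_{n-1})$ for $n\geq 2$. Then $\bigcup_n A_n=\bigsqcup_n D_n$, and by the strengthened fact above each $D_n$ is itself a finite disjoint union of halfspace-intervals, say $D_n=\bigsqcup_{j=1}^{k_n}E_{n,j}$. Concatenating these yields the desired finite or countable disjoint union
\[\bigcup_n A_n=\bigsqcup_n\bigsqcup_{j=1}^{k_n}E_{n,j}.\]

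I expect no genuine obstacle: the only nontrivial ingredient is the cited fact about single differences, and everything else is bookkeeping. The mildest subtlety is simply making sure that the strengthened form (difference against several halfspace-intervals) is available, which is why the induction on $k$ is worth spelling out explicitly before invoking the telescoping.
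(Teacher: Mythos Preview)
Your proof is correct and is precisely the standard fleshing-out of what the paper leaves implicit: the paper merely states that the lemma is ``an easy observation'' from the \cite{CDH} fact that a difference of two halfspace-intervals is a finite disjoint union of halfspace-intervals, and your telescoping argument (with the inductive upgrade to differences against finitely many intervals) is exactly the expected way to make this precise.
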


\begin{proof}[Proof of Proposition~\ref{the semifinite measure}]
It suffices to prove that $\wh{\nu}$ is additive and $\s$-subad\-di\-tive. Let $\{E_n\}_{n\geq 0}$ be a collection of pairwise-disjoint, morally measurable sets with union $E$. Given pairwise-disjoint $\mscr{H}(x_1|y_1),...,\mscr{H}(x_k|y_k)$, we have
\[\sum_{i=1}^k\nu\left(E\cap\mscr{H}(x_i|y_i)\right)=\sum_{n\geq 0}\sum_{i=1}^k\nu\left(E_n\cap\mscr{H}(x_i|y_i)\right)\leq\sum_{n\geq 0}\wh{\nu}(E_n)\]
and this proves the inequality $\wh{\nu}(E)\leq\sum\wh{\nu}(E_n)$. Now, if $E, F$ are disjoint morally measurable sets and $\eps>0$, we can find finitely many points ${x_i,y_i\in X}$ such that $\wh{\nu}(E)-\eps\leq\sum\nu\left(E\cap\mscr{H}(x_i|y_i)\right)$ and, similarly, points $u_j,v_j\in X$ satisfying an analogous inequality for $F$. By Lemma~\ref{countable disjoint unions of intervals}, the union of all $\mscr{H}(x_i|y_i)$ and $\mscr{H}(u_j|v_j)$ can be decomposed as a finite disjoint union of halfspace-intervals $\mscr{H}(w_k|z_k)$. Thus,
\begin{align*}
\wh{\nu}(E)+\wh{\nu}(F)-2\eps&\leq\sum_{i}\nu\left(E\cap\mscr{H}(x_i|y_i)\right)+\sum_{j}\nu\left(F\cap\mscr{H}(u_j|v_j)\right) \\
&\leq\sum_{k}\nu\left((E\sqcup F)\cap\mscr{H}(w_k|z_k)\right)\leq\wh{\nu}(E\sqcup F)
\end{align*}
and, by the arbitrariness of $\eps$, we obtain $\wh{\nu}(E)+\wh{\nu}(F)\leq\wh{\nu}(E\sqcup F)$. We have already shown subadditivity, so $\wh{\nu}(E)+\wh{\nu}(F)=\wh{\nu}(E\sqcup F)$.
\end{proof}

\begin{lem}[Properties of $\wh{\nu}$]\label{properties of nu hat}
\begin{enumerate}
\item For every $E\in\wh{\mscr{B}}$ there exist pairwise-disjoint $\mscr{H}(x_n|y_n)$, $n\geq 0$, such that $\wh{\nu}(E)=\sum\nu\left(E\cap\mscr{H}(x_n|y_n)\right)$.
\item $\wh{\nu}(E)\leq\nu(E)$ for all $E\in\mscr{B}$; in particular, $\wh{\nu}\ll\nu$.
\item $\wh{\nu}(E)=\nu(E)$ if $E\in\mscr{B}$ and $\nu(E)<+\infty$; in particular, if $E$ is morally null, $\nu(E)$ is either $0$ or $+\infty$.
\end{enumerate}
\end{lem}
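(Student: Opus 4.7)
The plan is to derive all three parts by reducing to countable disjoint unions of halfspace-intervals and appealing to Lemma~\ref{countable disjoint unions of intervals} together with the Carath\'eodory construction underlying $\nu$.

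For part~1, I would start by picking, for each $n\geq 1$, a disjoint family $\{\mscr{H}(x_i^{(n)}|y_i^{(n)})\}_{i\in I_n}$ whose partial sum approximates $\wh{\nu}(E)$ from below (to within $1/n$ if $\wh{\nu}(E)<+\infty$, or exceeding $n$ otherwise). Since $\nu(\mscr{H}(x|y))=d(x,y)<+\infty$, at most countably many summands in any disjoint family can be strictly positive whenever the total is finite, so each $I_n$ may be assumed countable. Taking the countable union over all $n$ of these intervals and applying Lemma~\ref{countable disjoint unions of intervals}, I rewrite the result as a single countable disjoint union $\bigsqcup_k\mscr{H}(a_k|b_k)$. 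Each approximating family fits inside this union, hence by additivity of $\nu$ on the countable disjoint refinement, $\sum_{i\in I_n}\nu(E\cap\mscr{H}(x_i^{(n)}|y_i^{(n)}))\leq\sum_k\nu(E\cap\mscr{H}(a_k|b_k))$. Passing to the sup in $n$ and comparing with the definition of $\wh{\nu}(E)$ yields the claimed equality.

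For part~2, given any disjoint family $\{\mscr{H}(x_i|y_i)\}_{i\in I}$, I note that only countably many terms contribute positively to $\sum_i\nu(E\cap\mscr{H}(x_i|y_i))$ in the finite case (if infinite, the bound by $\nu(E)$ is automatic provided $\nu(E)=+\infty$, which I will verify below). Restricting to such a countable subfamily and using $\sigma$-additivity of $\nu$ on $\mscr{B}$ gives $\sum_i\nu(E\cap\mscr{H}(x_i|y_i))=\nu(E\cap\bigsqcup_i\mscr{H}(x_i|y_i))\leq\nu(E)$. Taking the supremum over all disjoint families yields $\wh{\nu}(E)\leq\nu(E)$; absolute continuity $\wh{\nu}\ll\nu$ is then immediate, since $\nu(E)=0$ forces $\wh{\nu}(E)=0$.

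For part~3, I will use that $\nu$ is the Carath\'eodory extension of the premeasure $\mscr{H}(x|y)\mapsto d(x,y)$ on the algebra generated by halfspace-intervals (compare Theorem~\ref{median->halfspaces}). Given $E\in\mscr{B}$ with $\nu(E)<+\infty$ and $\eps>0$, there is a countable cover of $E$ by halfspace-intervals whose total $\nu$-mass is at most $\nu(E)+\eps$; by Lemma~\ref{countable disjoint unions of intervals}, I may assume this cover is disjoint, say $\bigsqcup_n\mscr{H}(x_n|y_n)\supseteq E$. Then $\nu(E)=\sum_n\nu(E\cap\mscr{H}(x_n|y_n))\leq\wh{\nu}(E)$, which combined with part~2 forces equality. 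The last assertion then follows immediately: if $E$ is morally null with $\nu(E)<+\infty$, part~3 gives $\nu(E)=\wh{\nu}(E)=0$, so $\nu(E)\in\{0,+\infty\}$ in general.

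The main obstacle I expect is the bookkeeping in part~1, where the supremum in the definition of $\wh{\nu}$ is taken over families of arbitrary cardinality; the reduction to countable disjoint refinements rests crucially on the finiteness of each $\nu(\mscr{H}(x|y))$ so that only countably many positive contributions can occur in any family of finite total mass. The subtlety in part~3 is ensuring the approximation property of the Carath\'eodory construction applies to arbitrary $E\in\mscr{B}$ of finite measure, but this is standard.
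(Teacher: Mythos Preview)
Your argument is correct and follows the same route as the paper: part~1 by combining approximating families via Lemma~\ref{countable disjoint unions of intervals}, part~3 by covering $E$ with a countable disjoint union of halfspace-intervals coming from the Carath\'eodory construction. The only cosmetic difference is that the paper obtains part~2 as an immediate corollary of part~1 (once $\wh{\nu}(E)=\sum_n\nu(E\cap\mscr{H}(x_n|y_n))$ for a disjoint countable family, $\sigma$-additivity of $\nu$ gives $\wh{\nu}(E)=\nu\bigl(E\cap\bigsqcup_n\mscr{H}(x_n|y_n)\bigr)\leq\nu(E)$), whereas you argue directly from the definition; your parenthetical ``which I will verify below'' is unnecessary and can be dropped, since finite subsums already give the bound.
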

\begin{proof}
Part~1 follows from Lemma~\ref{countable disjoint unions of intervals} and part~2 is direct from definitions. The ring used to define the measure $\nu$ consists of unions of halfspace-intervals (see \cite{CDH}); thus, if $\nu(E)<+\infty$, the set $E$ is contained in a countable union of halfspace-intervals, hence in a disjoint one, by Lemma~\ref{countable disjoint unions of intervals}. Now part~3 is straightforward.
\end{proof}

\subsection{Properties of the $\s$-algebra $\wh{\mscr{B}}$.}\label{properties of the s-algebra}

Let $X$ be a complete median space throughout this section.

\begin{lem}\label{atoms}
Singletons in $\mscr{H}$ are morally measurable and the following are equivalent for $\mf{h}\in\mscr{H}$:
\begin{enumerate}
\item $\mf{h}$ is an atom for $\wh{\nu}$;
\item $\mf{h}$ is clopen;
\item $d(\mf{h},\mf{h}^*)>0$.
\end{enumerate} 
\end{lem}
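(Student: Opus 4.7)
The plan is to settle moral measurability of singletons first, and then verify the three equivalences via the chain (2)$\iff$(3)$\iff$(1). Moral measurability of $\{\mf{h}\}$ reduces, since $\{\mf{h}\}\cap\mscr{H}(x|y)$ is either empty or equals $\{\mf{h}\}$, to showing that $\{\mf{h}\}\in\mscr{B}_0$ outright. I split according to whether $d(\mf{h},\mf{h}^*)$ is positive. If it is, then (as will be shown for the next equivalence) $\mf{h}$ and $\mf{h}^*$ are closed convex, hence gate-convex in the complete median space $X$. Picking $y_0\in\mf{h}$, $x_0:=\pi_{\mf{h}^*}(y_0)$ and $y_1:=\pi_{\mf{h}}(x_0)$, part~2 of Lemma~\ref{gates vs inclusions} shows that $(x_0,y_1)$ is a pair of gates for $(\mf{h}^*,\mf{h})$, and Lemma~\ref{gates for pairs of convex sets 1} gives $\mscr{H}(x_0|y_1)=\mscr{H}(\mf{h}^*|\mf{h})=\{\mf{h}\}$, where the last equality holds because any halfspace $\mf{k}$ with $\mf{h}\subseteq\mf{k}$ and $\mf{h}^*\subseteq\mf{k}^*$ must equal $\mf{h}$. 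If instead $d(\mf{h},\mf{h}^*)=0$, I pick $a_n\in\mf{h}^*$ and $b_n\in\mf{h}$ with $d(a_n,b_n)<1/n$; since $\{\mf{h}\}\subseteq\mscr{H}(a_n|b_n)$ for every $n$ and $\nu(\mscr{H}(a_n|b_n))=d(a_n,b_n)<1/n$, the $\nu$-outer measure of $\{\mf{h}\}$ vanishes, so $\{\mf{h}\}\in\mscr{B}_0$ with $\nu(\{\mf{h}\})=0$.

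The equivalence (2)$\iff$(3) is a direct application of the material from Section~\ref{main characters}. If $\mf{h}$ is clopen, then $\mf{h}$ and $\mf{h}^*$ are disjoint closed convex subsets of $X$, and Lemma~\ref{gates for pairs of convex sets 2} yields $d(\mf{h},\mf{h}^*)>0$. Conversely, setting $r:=d(\mf{h},\mf{h}^*)/2>0$, every $p\in\mf{h}$ satisfies $B(p,r)\subseteq\mf{h}$, so $\mf{h}$ is open, and by symmetry so is $\mf{h}^*$, making $\mf{h}$ clopen.

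The equivalence (1)$\iff$(3) reuses the pair of gates construction above. If $d(\mf{h},\mf{h}^*)>0$, taking the single halfspace-interval $\mscr{H}(x_0|y_1)=\{\mf{h}\}$ in the definition of $\wh\nu$ gives
\[\wh\nu(\{\mf{h}\})\geq\nu(\{\mf{h}\})=\nu(\mscr{H}(x_0|y_1))=d(x_0,y_1)=d(\mf{h},\mf{h}^*)>0,\]
and since the only subsets of $\{\mf{h}\}$ are $\emptyset$ and $\{\mf{h}\}$, this makes $\{\mf{h}\}$ an atom for $\wh\nu$. Conversely, if $d(\mf{h},\mf{h}^*)=0$, the second case of the measurability argument already gives $\nu(\{\mf{h}\})=0$; for any family of disjoint $\mscr{H}(x_i|y_i)\cu\mscr{H}$, at most one term $\nu(\{\mf{h}\}\cap\mscr{H}(x_i|y_i))$ is nonzero and it is bounded by $\nu(\{\mf{h}\})=0$, so $\wh\nu(\{\mf{h}\})=0$ and $\{\mf{h}\}$ is not an atom.

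The only mildly delicate point is the moral measurability in the case $d(\mf{h},\mf{h}^*)=0$: there $\{\mf{h}\}$ need not lie in $\mscr{B}$ itself, and the argument really does have to produce the singleton as the intersection of a shrinking sequence of halfspace-intervals and pass to the completion $\mscr{B}_0$. Everything else reduces cleanly to the gate-convexity machinery of Section~\ref{main characters}.
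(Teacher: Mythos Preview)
Your proof is correct and follows essentially the same strategy as the paper: both split according to whether $\mf{h}$ is clopen (equivalently $d(\mf{h},\mf{h}^*)>0$), identify $\{\mf{h}\}$ with a halfspace-interval $\mscr{H}(x_0|y_1)$ via a pair of gates in the clopen case, and trap $\{\mf{h}\}$ inside halfspace-intervals of arbitrarily small measure otherwise. The only cosmetic differences are that the paper phrases the non-clopen case as ``$x_n\in\mf{h}$ converging to $y\in\mf{h}^*$'' rather than your pairs $(a_n,b_n)$, and that Lemma~\ref{gates for pairs of convex sets 1} would be a cleaner citation than part~2 of Lemma~\ref{gates vs inclusions} for producing the pair of gates.
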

\begin{proof}
If one side of the wall $\{\mf{h},\mf{h}^*\}$ is not closed, say $\mf{h}$, we can find points $x_n\in\mf{h}$ converging to some $y\in\mf{h}^*$. Thus, $\{\mf{h}\}$ lies in the intersection of the sets $\mscr{H}(y|x_n)$ and it is morally null (hence morally measurable, since the $\s$-algebra $\mscr{B}_0$ is complete). Otherwise, $\mf{h}$ is clopen; by Lemma~\ref{gates for pairs of convex sets 2}, this is equivalent to $d(\mf{h},\mf{h}^*)>0$. Lemma~\ref{gates for pairs of convex sets 1} provides a pair $(x,y)$ of gates for $(\mf{h},\mf{h}^*)$, hence the set $\{\mf{h}\}=\mscr{H}(\mf{h}^*|\mf{h})=\mscr{H}(y|x)$ lies in $\mscr{B}$ and has positive measure. Conversely, if $\mf{h}$ is an atom, it is easy to see that $d(u,v)\geq\wh{\nu}(\{\mf{h}\})$ for all $u\in\mf{h}$, $v\in\mf{h}^*$.
\end{proof}

\begin{lem}\label{connected vs atoms}
A complete median space $X$ is connected if and only if the measure space $\left(\mscr{H}(X),\wh{\nu}\right)$ has no atoms. 
\end{lem}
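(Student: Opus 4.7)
We establish each direction by contrapositive. If $\mathfrak{h}$ is an atom of $\widehat{\nu}$, then by Lemma~\ref{atoms} $\mathfrak{h}$ is clopen, so $X=\mathfrak{h}\sqcup\mathfrak{h}^*$ exhibits $X$ as a partition into two nonempty open subsets, and $X$ is disconnected. This settles the ``no atoms $\Leftarrow$ connected'' implication.

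For the converse ``disconnected $\Rightarrow$ atom exists,'' assume $X$ is disconnected. Lemma~4.6 of \cite{Bow4} gives that $X$ is not geodesic; combined with completeness and the standard midpoint characterisation of complete geodesic metric spaces, this yields a pair $x,y\in X$ with no midpoint, i.e.\ no $z\in I(x,y)$ satisfying $d(x,z)=d(x,y)/2$.

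For each $\mathfrak{h}\in\mscr{H}(x|y)$ the gate $a_\mathfrak{h}:=\pi_{\overline{\mathfrak{h}}}(x)\in\overline{\mathfrak{h}}\cap I(x,y)$ realises $d(x,\mathfrak{h})$. Were $d(x,\mathfrak{h})=d(x,y)/2$ for some $\mathfrak{h}$, then $a_\mathfrak{h}$ would be a midpoint; the no-midpoint hypothesis therefore disjointly partitions $\mscr{H}(x|y)=\mscr{H}_-\sqcup\mscr{H}_+$ with $\mscr{H}_\pm=\{\mathfrak{h}:d(x,\mathfrak{h})\lessgtr d(x,y)/2\}$. Since $\mathfrak{h}\subseteq\mathfrak{k}$ implies $\overline{\mathfrak{h}}\subseteq\overline{\mathfrak{k}}$ and hence $d(x,\mathfrak{k})\leq d(x,\mathfrak{h})$, the gate-distance function is anti-monotone in inclusion; consequently $\mscr{H}_-$ is upward-closed and $\mscr{H}_+$ downward-closed in the pocset $\mscr{H}(x|y)$.

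The remaining step is to exhibit a halfspace $\mathfrak{h}_0$ at the interface between the two families with $d(\mathfrak{h}_0,\mathfrak{h}_0^*)>0$, which by Lemma~\ref{atoms} would be an atom and contradict the hypothesis. The union of a maximal chain in $\mscr{H}_+$ is again a halfspace in $\mscr{H}(x|y)$, since its complement is the intersection of a decreasing chain of convex sets and is therefore convex; so Zorn's lemma---combined with Lemma~\ref{countable cofinal subsets} to reduce to countable chains where needed---provides a natural boundary halfspace candidate $\mathfrak{h}_0$. The main obstacle then lies in verifying that the sharp jump in gate distances across $\mathfrak{h}_0$ forces $\overline{\mathfrak{h}_0}\cap\overline{\mathfrak{h}_0^*}=\emptyset$: since a priori this closure intersection could occur away from $I(x,y)$, the argument must relate the global closure behaviour of $\mathfrak{h}_0$ to its restriction $\mathfrak{h}_0\cap I(x,y)$ via Proposition~\ref{walls in convex}, exploit continuity of gate projections, and apply Lemma~\ref{gates for pairs of convex sets 2} on disjoint closed convex sets having positive distance.
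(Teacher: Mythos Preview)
Your first direction is correct and matches the paper. The second direction, however, is not a proof: your final paragraph explicitly identifies ``the main obstacle'' and then only lists ingredients one would hope to use, without carrying out the argument. In particular, you never actually show that the maximal $\mathfrak{h}_0\in\mscr{H}_+$ satisfies $d(\mathfrak{h}_0,\mathfrak{h}_0^*)>0$. Maximality of $\mathfrak{h}_0$ in $\mscr{H}_+$ only tells you that every $\mathfrak{k}\in\mscr{H}(x|y)$ with $\mathfrak{h}_0\subsetneq\mathfrak{k}$ lies in $\mscr{H}_-$; it says nothing about halfspaces outside $\mscr{H}(x|y)$, nor does it directly control $\overline{\mathfrak{h}_0}\cap\overline{\mathfrak{h}_0^*}$. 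You also invoke Lemma~\ref{countable cofinal subsets}, which requires finite rank---an assumption the present lemma does not carry---and which is in any case unnecessary: Zorn applies to arbitrary chains, since the union of any chain of halfspaces in $\mscr{H}(x|y)$ is again a halfspace in $\mscr{H}(x|y)$.

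The paper's argument avoids these difficulties by running Zorn on \emph{points} rather than halfspaces. One considers $\mscr{F}_x=\{z\in I(x,y)\mid d(x,z)\leq d(z,y)\}$, ordered by $z_1\preceq z_2$ iff $z_2\in I(z_1,y)$; chains are Cauchy nets and converge by completeness, so Zorn yields a maximal $\overline z$. The symmetric construction yields $\overline w$. If neither is a midpoint, maximality forces $I(\overline z,\overline w)=\{\overline z,\overline w\}$, whence $\mscr{H}(\overline z\mid\overline w)$ is a singleton; that single halfspace is then an atom since $\wh\nu(\{\mathfrak h\})=d(\overline z,\overline w)>0$. This sidesteps entirely the problem of relating closures of halfspaces to their restrictions to $I(x,y)$, which is where your sketch stalls.
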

\begin{proof}
If $X$ is connected, $\wh{\nu}$ has no atoms by Lemma~\ref{atoms}. Conversely, if $\wh{\nu}$ has no atoms, we prove that, for every $x,y\in X$, there exists $z\in I(x,y)$ such that $d(x,z)=d(z,y)$. This implies that $X$ is geodesic (see e.g.\ Remark~1.4(1) in Chapter~I.1 of \cite{BH}). Let $\mscr{F}_x\cu I(x,y)$ be the subset of points $z$ such that $d(x,z)\leq d(z,y)$; we endow it with a structure of poset by declaring that $z_1\preceq z_2$ whenever $z_2\in I(z_1,y)$. Chains in $\mscr{F}_x$ correspond to Cauchy nets in $I(x,y)$ and these converge; thus, $\mscr{F}_x$ is inductive and Zorn's Lemma yields a maximal element $\overline z$. Exchanging the roles of $x$ and $y$, the same construction provides a point $\overline w$. Suppose for the sake of contradiction that we have $d(x,\overline z)<\frac{1}{2}d(x,y)<d(x,\overline w)$. By maximality of $\overline z$ and $\overline w$, the interval $I(\overline z,\overline w)$ consists of the sole points $\overline z$ and $\overline w$. By Proposition~\ref{walls in convex}, we conclude that $\mscr{H}(\overline z|\overline w)$ consists of a single halfspace, hence an atom, contradiction.
\end{proof}

Given a point $x\in X$ and a convex subset $C\cu X$ we define their \emph{adjacencies}:
\begin{align*}
\text{Adj}_x&:=\{\mf{h}\in\mscr{H}\mid x\not\in\mf{h}, \ x\in\overline{\mf{h}} \}, \\
\text{Adj}(C)&:=\{\mf{h}\in\mscr{H}\mid \mf{h}\cap C=\emptyset, \ \mf{h}\cap\overline C\neq\emptyset \}.
\end{align*}
Note that in general $\text{Adj}_x\neq\text{Adj}(\{x\})=\emptyset$.

\begin{lem}\label{adjacency}
If $X$ is locally convex, adjacencies are morally null. In particular, $\s_C$ and $\mscr{H}(C)$ are morally measurable for every convex subset $C\cu X$.
\end{lem}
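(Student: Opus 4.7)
The plan is to prove the three claims in sequence: (1) each $\text{Adj}_x$ is morally null, (2) each $\text{Adj}(C)$ is morally null, and (3) $\s_C$ and $\mscr{H}(C)$ are morally measurable. The core is an $\eps$-squeezing argument using local convexity for (1); step~(2) reduces cleanly to (1) by projecting to $\overline C$, and (3) follows by decomposing $\s_C\cap\mscr{H}(u|v)$ into a halfspace-interval plus a subset of an $\text{Adj}$-set.

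For (1), I fix $u,v\in X$ and show $\nu^*\bigl(\text{Adj}_x\cap\mscr{H}(u|v)\bigr)=0$. For every $\eps>0$, local convexity provides a convex neighbourhood $N_\eps\cu B(x,\eps)$ of $x$, and since $X$ is complete, $\overline{N_\eps}$ is closed and convex, hence gate-convex. Given $\mf{h}\in\text{Adj}_x\cap\mscr{H}(u|v)$, the relations $x\in N_\eps\cap\mf{h}^*$ and $x\in\overline{\mf{h}}$ force both $\overline{N_\eps}\cap\mf{h}^*$ and $\overline{N_\eps}\cap\mf{h}$ to be nonempty. By Lemma~\ref{gates vs inclusions}(1), the gate-projection $\pi_\eps:=\pi_{\overline{N_\eps}}$ sends $u\in\mf{h}^*$ into $\mf{h}^*$ and $v\in\mf{h}$ into $\mf{h}$, so $\mf{h}\in\mscr{H}(\pi_\eps(u)|\pi_\eps(v))$. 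Since $\pi_\eps(u),\pi_\eps(v)\in\overline{N_\eps}\cu\overline{B(x,\eps)}$, Theorem~\ref{median->halfspaces} gives $\nu\bigl(\mscr{H}(\pi_\eps(u)|\pi_\eps(v))\bigr)=d(\pi_\eps(u),\pi_\eps(v))\leq 2\eps$. Letting $\eps\to 0$, the outer measure vanishes, so $\text{Adj}_x\cap\mscr{H}(u|v)\in\mscr{B}_0$ with zero $\nu$-measure.

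For (2), I fix $u,v\in X$ and exploit that $\overline C$ is gate-convex to set $y:=\pi_{\overline C}(v)$. For any $\mf{h}\in\text{Adj}(C)\cap\mscr{H}(u|v)$, the conditions $v\in\mf{h}$ and $\mf{h}\cap\overline C\neq\emptyset$ together with Lemma~\ref{gates vs inclusions}(1) give $y\in\mf{h}\cap\overline C$, while $C\cu\mf{h}^*$ implies $y\in\overline C\cu\overline{\mf{h}^*}$. The key point is that $y$ is the \emph{same} for every such $\mf{h}$, so $\mf{h}^*\in\text{Adj}_y$ uniformly and therefore $\text{Adj}(C)\cap\mscr{H}(u|v)\cu\text{Adj}_y^*$. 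Because $*$ is a measure-preserving, bi-measurable involution of $(\mscr{H},\mscr{B}_0,\nu)$, step~(1) applied at $y$ yields $\text{Adj}_y^*\cap\mscr{H}(u|v)\in\mscr{B}_0$ with zero measure, and completeness of $\mscr{B}_0$ finishes.

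For (3), I write $\s_C\cap\mscr{H}(u|v)=\mscr{H}(u|K)$ where $K$ is the convex hull of $C\cup\{v\}$, and decompose
\[ \mscr{H}(u|K)=\mscr{H}(u|\overline K)\ \sqcup\ \bigl(\mscr{H}(u|K)\setminus\mscr{H}(u|\overline K)\bigr). \]
Elements of the difference satisfy $K\cu\mf{h}$ while $\overline K\not\cu\mf{h}$, i.e.\ $\mf{h}^*\in\text{Adj}(K)$, so step~(2) places this piece in $\mscr{B}_0$ with zero measure. Applying Lemma~\ref{gates for pairs of convex sets 1} to the gate-convex pair $(\{u\},\overline K)$ identifies $\mscr{H}(u|\overline K)=\mscr{H}(u|w)$ with $w=\pi_{\overline K}(u)$, a halfspace-interval in $\mscr{B}$. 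Hence $\s_C\cap\mscr{H}(u|v)\in\mscr{B}_0$, and $\mscr{H}(C)=\mscr{H}\setminus(\s_C\cup\s_C^*)$ inherits moral measurability since $*$ preserves $\wh{\mscr{B}}$. The main obstacle is the uniformity used in step~(2): the projection $y=\pi_{\overline C}(v)$ must serve as a single "witness point" controlling every $\mf{h}\in\text{Adj}(C)\cap\mscr{H}(u|v)$ at once, which is precisely the attraction property encoded in Lemma~\ref{gates vs inclusions}(1). Once this is arranged, the rest is routine bookkeeping with $\mscr{B}_0$.
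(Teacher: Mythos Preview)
Your proof is correct and follows essentially the same strategy as the paper's: use local convexity to trap $\text{Adj}_x$ inside arbitrarily small halfspace-intervals, reduce $\text{Adj}(C)$ to a single $\text{Adj}_y$ via gate-projection to $\overline C$, and decompose $\s_C$ into a halfspace-interval plus a morally null adjacency piece. The variations are cosmetic: in step~(1) you invoke local convexity directly rather than through Lemma~\ref{pre-adjacency} (which is a mild simplification, since Lemma~\ref{pre-adjacency} is proved by the same $N_\eps$-projection argument you give); in step~(2) you project $v$ where the paper projects $u$; and in step~(3) you pass through the hull $K$ of $C\cup\{v\}$, whereas the paper writes the slightly more direct decomposition $\s_C\setminus\s_u=\mscr{H}(u|\overline u)\sqcup\mscr{H}(\overline u|C)$ with $\overline u=\pi_{\overline C}(u)$.
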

\begin{proof}
Given $x,y\in X$, let $K$ be the intersection of all halfspaces in $\text{Adj}_x\cap\s_y$; by Lemma~\ref{pre-adjacency}, we have $x\in\overline K$ and we can find points $x_n\in K$ converging to $x$. Thus $\text{Adj}_x\cap\s_y$ is contained in the intersection of the sets $\mscr{H}(x|x_n)$ and it is morally null. By the arbitrariness of $y$, we conclude that $\text{Adj}_x$ is morally null. For every convex subset $C\cu X$ and points $u,v\in X$, we have
\[\text{Adj}(C)^*\cap\mscr{H}(u|v)\cu\text{Adj}(C)^*\setminus\s_u\cu\mscr{H}(\overline u|C),\]
where $\overline u$ is the gate-projection of $u$ to $\overline C$; the last inclusion follows from part~1 of Lemma~\ref{gates vs inclusions}. The set $\mscr{H}(\overline u|C)\cu\text{Adj}_{\overline u}$ is morally null, hence $\text{Adj}(C)$ is morally null. Moreover, $\s_C\setminus\s_u=\mscr{H}(u|C)=\mscr{H}(u|\overline u)\sqcup\mscr{H}(\overline u|C)$, where $\mscr{H}(\overline u|C)$ is morally null; as a consequence, $\s_C$ is morally measurable. The same holds for $\mscr{H}(C)=\mscr{H}\setminus\left(\s_C\sqcup\s_C^*\right)$.
\end{proof}

We will denote by $\mscr{H}^0$ the collection of nowhere-dense halfspaces and set $\mscr{H}^{\x}:=\mscr{H}^0\cup\left(\mscr{H}^0\right)^*$. If $\mf{h}\in\mscr{H}\setminus\mscr{H}^{\x}$ we say that $\mf{h}$ is \emph{thick}.

\begin{cor}\label{nowhere-dense halfspaces}
If $X$ is locally convex, the set $\mscr{H}^{\x}$ is morally null.
\end{cor}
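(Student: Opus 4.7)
The plan is to reduce the corollary directly to Lemma~\ref{adjacency} via a simple topological observation about nowhere-dense halfspaces. The observation is: if $\mf{h}\in\mscr{H}^0$ and $y\in\mf{h}$, then $y\in\overline{\mf{h}^*}$. Indeed, otherwise $y$ would admit an open neighbourhood contained in $\mf{h}\cu\overline{\mf{h}}$, placing $y$ in the interior of $\overline{\mf{h}}$ and contradicting the fact that $\overline{\mf{h}}$ has empty interior. Consequently $\mf{h}^*\in\text{Adj}_y$, or equivalently $\mf{h}\in\text{Adj}_y^*$, whenever $y\in\mf{h}$.

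Next I would fix arbitrary $x,y\in X$ and estimate $\mscr{H}^{\x}\cap\mscr{H}(x|y)$. If $\mf{h}\in\mscr{H}^0\cap\mscr{H}(x|y)$, then $y\in\mf{h}$ and the observation above gives $\mf{h}\in\text{Adj}_y^*$. If instead $\mf{k}\in(\mscr{H}^0)^*\cap\mscr{H}(x|y)$, then $\mf{k}^*\in\mscr{H}^0$ and $x\in\mf{k}^*$, so applying the observation to $\mf{k}^*$ yields $\mf{k}\in\text{Adj}_x$. Altogether,
\[ \mscr{H}^{\x}\cap\mscr{H}(x|y) \ \cu \ \text{Adj}_y^*\cup\text{Adj}_x. \]

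To conclude, recall from Lemma~\ref{adjacency} that $\text{Adj}_x$ is morally null. Since $*\colon\mscr{H}\ra\mscr{H}$ is a measure-preserving involution carrying $\mscr{H}(u|v)$ to $\mscr{H}(v|u)$ and preserving the $\s$-algebra $\mscr{B}_0$, we also have $\nu\left(\text{Adj}_y^*\cap\mscr{H}(u|v)\right)=\nu\left(\text{Adj}_y\cap\mscr{H}(v|u)\right)=0$ for every $u,v\in X$, so $\text{Adj}_y^*$ is morally null as well. The display above then shows that $\mscr{H}^{\x}\cap\mscr{H}(x|y)$ is contained in a set that is morally null; by completeness of $\mscr{B}_0$ it lies in $\mscr{B}_0$ with measure zero. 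By arbitrariness of $x,y$, this yields both moral measurability and moral nullness of $\mscr{H}^{\x}$.

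There is no real obstacle in this argument once one has Lemma~\ref{adjacency} in hand; the only mild subtlety is the need to invoke the measure-preserving involution $*$ to translate the nullness of $\text{Adj}_y$ into the nullness of $\text{Adj}_y^*$ and so cover both halves of $\mscr{H}^{\x}=\mscr{H}^0\cup(\mscr{H}^0)^*$ simultaneously.
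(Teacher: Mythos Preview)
Your argument is correct and follows essentially the same route as the paper: the paper's proof consists of exactly the observation $\mscr{H}^0\cap\mscr{H}(x|y)\cu\text{Adj}_y^*$ together with an appeal to Lemma~\ref{adjacency}. You have simply spelled out the topological justification for that inclusion and made explicit the use of the $*$-involution to handle $(\mscr{H}^0)^*$, both of which the paper leaves implicit.
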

\begin{proof}
This follows from Lemma~\ref{adjacency} and the observation that, for every $x,y\in X$, we have $\mscr{H}^0\cap\mscr{H}(x|y)\cu\text{Adj}_y^*$. \end{proof}

\begin{prop}\label{separable}
Let $X$ be locally convex. If $X$ is separable, the measure space $\left(\mscr{H},\wh{\nu}\right)$ is $\s$-finite. The converse holds if $X$ has finite rank.
\end{prop}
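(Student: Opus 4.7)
For the forward direction (separable implies $\s$-finite), fix a countable dense $D=\{x_n\}\cu X$; each $\mscr{H}(x_n|x_m)$ has $\wh{\nu}$-measure $d(x_n,x_m)<+\infty$ by Lemma~\ref{properties of nu hat}(3). I would show that the complement $N := \mscr{H}\setminus\bigcup_{n,m}\mscr{H}(x_n|x_m)$ is morally null. Any $\mf{h}\in N$ fails to have its wall separate any pair in $D$, so $D\cu\mf{h}$ or $D\cu\mf{h}^*$. Consequently, for arbitrary $u,v\in X$ with $\mf{h}\in\mscr{H}(u|v)$, density forces $u$ or $v$ to land in $\overline{\mf{h}}\cap\mf{h}^*$ or $\mf{h}\cap\overline{\mf{h}^*}$, placing $\mf{h}\in\text{Adj}_u\cup(\text{Adj}_v)^*$. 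Lemma~\ref{adjacency} and the $*$-invariance of $\nu$ (from Theorem~\ref{median->halfspaces}) then ensure this set is morally null, and the cover $\mscr{H}=N\cup\bigcup_{n,m}\mscr{H}(x_n|x_m)$ witnesses $\s$-finiteness.

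For the converse (assuming finite rank), cover $\mscr{H}=\bigcup_n E_n$ with $\wh{\nu}(E_n)<+\infty$ and use Lemma~\ref{properties of nu hat}(1) to extract pairwise-disjoint halfspace-intervals $\mscr{H}(x^n_k|y^n_k)$ with $\wh{\nu}(E_n)=\sum_k\nu(E_n\cap\mscr{H}(x^n_k|y^n_k))$. Let $D$ be the countable set of all endpoints and $C$ the closure of the convex hull of $D$. A straightforward iteration using Corollary~\ref{intervals are compact new} (which makes each $I(x,y)$ compact and hence separable) shows that the convex hull of $D$ is separable; so $X$ will be separable as soon as I can prove $C=X$. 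Note that $C$ is closed and convex, hence gate-convex by Lemma~\ref{gate-convexity vs compact intervals}. Suppose for a contradiction $z\in X\setminus C$, set $w:=\pi_C(z)$, $\delta:=d(z,w)>0$: every $\mf{h}\in\mscr{H}(z|w)=\mscr{H}(z|C)$ contains all of $C$, in particular both $x^n_k$ and $y^n_k$, so $\mf{h}\notin\mscr{H}(x^n_k|y^n_k)$.

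The last part of the argument must be run through the supremum definition of $\wh{\nu}$, since $E_n\cap\mscr{H}(z|w)$ only lies in $\wh{\mscr{B}}$. Given any disjoint family $\{\mscr{H}(u_i|v_i)\}_i$ of halfspace-intervals, each intersection $\mscr{H}(z|w)\cap\mscr{H}(u_i|v_i)$ is itself a halfspace-interval: by convexity of halfspaces it coincides with $\mscr{H}(I(u_i,w)|I(z,v_i))$, which is either empty or a halfspace-interval between the gates supplied by Lemma~\ref{gates for pairs of convex sets 1}. Thus the enlarged disjoint family $\{\mscr{H}(z|w)\cap\mscr{H}(u_i|v_i)\}_i\sqcup\{\mscr{H}(x^n_k|y^n_k)\}_k$ is admissible in the supremum computing $\wh{\nu}(E_n)$, forcing $\sum_i\nu(E_n\cap\mscr{H}(z|w)\cap\mscr{H}(u_i|v_i))=0$. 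Taking the supremum and summing over $n$ yields $\wh{\nu}(\mscr{H}(z|w))=0$, contradicting $\wh{\nu}(\mscr{H}(z|w))=d(z,w)=\delta>0$ (by Lemma~\ref{properties of nu hat}(3)). The main subtlety is precisely this final step: one cannot appeal to additivity of $\nu$ on $E_n\cap\mscr{H}(z|w)$ directly, so it is crucial that intersections of two halfspace-intervals remain halfspace-intervals.
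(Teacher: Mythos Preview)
Your forward direction is essentially the paper's argument: density of $D$ forces one side of each leftover wall to have dense complement, which places the halfspace in an adjacency set. The paper packages this via Corollary~\ref{nowhere-dense halfspaces}, but the content is the same.

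For the converse your argument is correct but needlessly roundabout, and the ``main subtlety'' you flag is illusory. From $\wh{\nu}(E_n)=\sum_k\nu(E_n\cap\mscr{H}(x^n_k|y^n_k))$ together with Lemma~\ref{properties of nu hat}(3), countable additivity of $\wh{\nu}$ itself (Proposition~\ref{the semifinite measure}) gives
\[\wh{\nu}\Bigl(E_n\setminus\bigcup_k\mscr{H}(x^n_k|y^n_k)\Bigr)=0\]
immediately, so the countable family $\{\mscr{H}(x^n_k|y^n_k)\}_{n,k}$ covers $\mscr{H}$ up to a morally null set. Since $\mscr{H}(z|w)$ is disjoint from this cover it is morally null, contradicting $\wh{\nu}(\mscr{H}(z|w))=d(z,w)>0$. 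This is exactly what the paper does. Your remark that one ``cannot appeal to additivity of $\nu$ on $E_n\cap\mscr{H}(z|w)$ directly'' misses that the relevant measure is $\wh{\nu}$, not $\nu$, and $\wh{\nu}$ is a genuine measure on all of $\wh{\mscr{B}}$; no detour through the supremum definition or through intersections of halfspace-intervals is needed. (Incidentally, your intersection formula has the pairs swapped: the correct identity is $\mscr{H}(z|w)\cap\mscr{H}(u_i|v_i)=\mscr{H}(I(z,u_i)|I(w,v_i))$, though with this fix your longer argument does go through.) The paper also cites Lemma~6.4 of \cite{Bow1} for separability of convex hulls of separable sets rather than sketching an inductive argument on intervals.
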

\begin{proof}
If $\{x_n\}_{n\geq 0}$ is a countable dense subset of $X$, all halfspace-intervals $\mscr{H}(x_n|x_m)$ have finite measure and their union contains $\mscr{H}\setminus\mscr{H}^{\x}$; thus $\left(\mscr{H},\wh{\nu}\right)$ is $\s$-finite by Corollary~\ref{nowhere-dense halfspaces}.

Conversely, if $\left(\mscr{H},\wh{\nu}\right)$ is $\s$-finite, part~1 of Lemma~\ref{properties of nu hat} implies that there exists $\{x_n\}_{n\geq 0}\cu X$ such that the sets $\mscr{H}(x_n|x_m)$ cover $\mscr{H}$ up to a morally null set. By Lemma~6.4 in \cite{Bow1} and Corollary~\ref{intervals are compact new} above, hulls of separable subsets of $X$ are separable; thus, the hull $C$ of $\{x_n\}_{n\geq 0}$ is separable. If there existed a point $z\not\in\overline C$, the gate $\overline z$ for $(z,\overline C)$ would produce a positive-measure set $\mscr{H}(z|\overline z)$ disjoint from the union of the $\mscr{H}(x_n|x_m)$, contradiction. Thus $C$ is dense in $X$ and $X$ is separable.
\end{proof}

Recall that a subset $\mc{C}\cu\mscr{H}$ is said to be inseparable if it contains every halfspace $\mf{j}\in\mscr{H}$ such that there exist $\mf{h},\mf{k}\in\mc{C}$ with $\mf{h}\cu\mf{j}\cu\mf{k}$.

\begin{lem}\label{inseparable sets are morally measurable}
If $X$ has finite rank, any inseparable subset $\mc{C}\cu\mscr{H}$ is morally measurable. In particular, every filter $\s\cu\mscr{H}$ is morally measurable.
\end{lem}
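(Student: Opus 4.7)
The plan is to fix $x,y\in X$ and show that $\mc{C}\cap\mscr{H}(x|y)$ lies in $\mscr{B}_0$; this will establish moral measurability by definition, and the "in particular" clause is automatic since every filter is inseparable. For the inseparable set $\mc{C}$, the tautology
\[\mc{C} = \bigcup_{\mf{h}\cu\mf{h}' \text{ in } \mc{C}}\{\mf{j}\in\mscr{H}:\mf{h}\cu\mf{j}\cu\mf{h}'\}\]
already expresses $\mc{C}$ as a union of intervals with endpoints in $\mc{C}$, but the index set is uncountable. The finite-rank hypothesis will be used to replace it by a countable sub-union with the same trace on $\mscr{H}(x|y)$.

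First I would apply Lemma~\ref{Dilworth for differences} to the ultrafilters $\s_y$ and $\s_x$, decomposing $\mscr{H}(x|y) = \s_y\setminus\s_x$ as a finite disjoint union of chains $\mc{C}_1\sqcup\cdots\sqcup\mc{C}_k$ with $k\leq\text{rank}(X)$. Inseparability of $\mc{C}$ forces each $\mc{C}\cap\mc{C}_i$ to be an "interval" inside the totally ordered $\mc{C}_i$; Lemma~\ref{countable cofinal subsets} then produces a countable subset $\mc{A}_i\cu\mc{C}\cap\mc{C}_i$ that is both cofinal and coinitial in $\mc{C}\cap\mc{C}_i$. Setting $\mc{A}:=\bigcup_i\mc{A}_i$, every $\mf{j}\in\mc{C}\cap\mc{C}_i$ can be sandwiched $\mf{h}\cu\mf{j}\cu\mf{h}'$ by a pair in $\mc{A}_i$, while, conversely, the inseparability of both $\mc{C}$ and $\mscr{H}(x|y)$ ensures that any such sandwich built from a pair in $\mc{A}$ lies in $\mc{C}\cap\mscr{H}(x|y)$. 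Hence
\[\mc{C}\cap\mscr{H}(x|y) = \bigcup_{\mf{h}\cu\mf{h}' \text{ in } \mc{A}}\{\mf{j}\in\mscr{H}:\mf{h}\cu\mf{j}\cu\mf{h}'\},\]
a countable union.

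It remains to verify that each interval $[\mf{h},\mf{h}']:=\{\mf{j}\in\mscr{H}:\mf{h}\cu\mf{j}\cu\mf{h}'\}$ is morally measurable. Rewriting it as $\s_{\mf{h}}\cap\s_{\mf{h}'^*}^*$ and recalling the partition $\mscr{H} = \mscr{H}(A)\sqcup\s_A\sqcup\s_A^*$ valid for any convex $A\cu X$, Lemma~\ref{adjacency} applied with $A=\mf{h}$ and with $A=\mf{h}'^*$ places each of $\s_{\mf{h}}$, $\s_{\mf{h}'^*}$, $\mscr{H}(\mf{h})$, $\mscr{H}(\mf{h}'^*)$ in $\wh{\mscr{B}}$, so by set-theoretic difference $\s_{\mf{h}'^*}^*\in\wh{\mscr{B}}$ as well. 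Thus $[\mf{h},\mf{h}']\in\wh{\mscr{B}}$, and the countable union above lies in $\wh{\mscr{B}}$.

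The main obstacle is exactly the move from the tautological uncountable covering of $\mc{C}$ to a countable one. Finite rank is what makes this possible in two ways at once: through Dilworth's theorem to bound the number of chains in $\mscr{H}(x|y)$, and through Proposition~\ref{trivial chains of halfspaces} (the engine behind Lemma~\ref{countable cofinal subsets}) to guarantee that each chain admits a countable cofinal and coinitial subset.
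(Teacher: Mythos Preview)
Your proof is correct and follows essentially the same route as the paper's: reduce to $\mc{C}\cap\mscr{H}(x|y)$, apply Dilworth (Lemma~\ref{Dilworth for differences}) and the countable cofinal subsets of Lemma~\ref{countable cofinal subsets} to write this as a countable union of sets $\mscr{H}(\mf{k}^*|\mf{h})=\{\mf{j}:\mf{h}\cu\mf{j}\cu\mf{k}\}$ with $\mf{h},\mf{k}\in\mc{C}$, and invoke Lemma~\ref{adjacency} to see each such set is morally measurable. The paper compresses all of this into two sentences, but the argument is the same.
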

\begin{proof}
It suffices to prove the lemma under the additional assumption that $\mc{C}\cu\mscr{H}(x|y)$, for points $x,y\in X$. By Lemma~\ref{Dilworth for differences} and Corollary~\ref{countable cofinal subsets}, $\mc{C}$ is a countable union of subsets of the form $\mscr{H}(\mf{k}^*|\mf{h})$, with $\mf{h},\mf{k}\in\mc{C}$. Each of these is morally measurable by Lemma~\ref{adjacency}.
\end{proof}

We can extend the notion of admissibility in Section~\ref{SMW} and \cite{CDH} as follows. We say that a partial filter $\s\cu\mscr{H}$ is \emph{tangible} if it is morally measurable and $\wh{\nu}(\s\setminus\s_x)<+\infty$ for some (equivalently, all) $x\in X$. For a morally measurable ultrafilter $\s$, tangibility is equivalent to having $\wh{\nu}(\s\triangle\s_x)<+\infty$, since $(\s_x\setminus\s)=(\s\setminus\s_x)^*$. For instance, all admissible ultrafilters $\s$ on the PMP $(\mscr{H},\mscr{B},\nu,\s_x)$ are tangible. Indeed, they are morally measurable since $\s=\s_x\sqcup\left[(\s\triangle\s_x)\setminus\s_x\right]$,
where $\s_x$ is morally measurable and $\s\triangle\s_x\in\mscr{B}$. 

We denote by $\mscr{M}(X)$ the set of tangible ultrafilters, identifying ultrafilters with $\wh{\nu}$-null symmetric difference. The analogy with the notation $\mc{M}(X)$ of Section~\ref{SMW} is justified by Corollary~\ref{tangible} below; for now, we simply observe that there are isometric embeddings $X\hookrightarrow\mc{M}(X)\hookrightarrow\mscr{M}(X)$ as a consequence of parts~2 and~3 of Lemma~\ref{properties of nu hat}. The following is a key result.

\begin{lem}\label{tangible lemma}
Let $X$ be locally convex. For every tangible filter $\s\cu\mscr{H}$, there exists $x\in X$ such that $\wh{\nu}(\s\setminus\s_x)=0$. 
\end{lem}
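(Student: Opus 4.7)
The plan is to find a single point $x\in X$ lying in the closure of \emph{every} halfspace of $\s$. Once such $x$ exists, every $\mf{h}\in\s\setminus\s_x$ satisfies $x\in\overline{\mf{h}}\setminus\mf{h}$, i.e.~$\mf{h}\in\text{Adj}_x$; Lemma~\ref{adjacency} asserts that $\text{Adj}_x$ is morally null. Since $\s$ is morally measurable (by tangibility) and $\s_x$ is morally measurable (as observed in Section~\ref{finer s-algebra}), so is $\s\setminus\s_x$, and the desired vanishing $\wh{\nu}(\s\setminus\s_x)=0$ follows at once.

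\textbf{Constructing $x$ as a Cauchy limit.} Fix $y\in X$ and write $r:=\wh{\nu}(\s\setminus\s_y)<+\infty$. I will build $x$ as the limit of a net indexed by the directed set $\mc{F}$ of finite subsets of $\s$, ordered by inclusion. For $F\in\mc{F}$, Helly's Theorem applied to the filter $\s\supseteq F$ gives a non-empty convex set $C_F:=\bigcap_{\mf{h}\in F}\mf{h}$; since $X$ is complete, $\overline{C_F}$ is gate-convex, and I set $x_F:=\pi_{\overline{C_F}}(y)$. For $F\cu F'$, part~3 of Lemma~\ref{gates vs inclusions} gives $x_{F'}=\pi_{\overline{C_{F'}}}(x_F)$, so $yx_Fx_{F'}$ is a geodesic and $F\mapsto d(y,x_F)$ is monotone increasing. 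Granted the uniform bound $d(y,x_F)\leq r$ proved below, a standard argument using the geodesic identity shows $(x_F)$ is Cauchy; completeness of $X$ then yields $x:=\lim x_F\in X$. Because $x_F\in\overline{C_F}\cu\overline{\mf{h}}$ for every $F\ni\mf{h}$ and $\overline{\mf{h}}$ is closed, $x\in\overline{\mf{h}}$ for each $\mf{h}\in\s$, completing the reduction.

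\textbf{Key estimate.} The crucial claim is $\mscr{H}(y|x_F)\cu\s\setminus\s_y$, because Lemma~\ref{properties of nu hat} then gives
\[d(y,x_F)=\nu\left(\mscr{H}(y|x_F)\right)\leq\wh{\nu}(\s\setminus\s_y)=r.\]
By Lemma~\ref{gates for pairs of convex sets 1}, $\mscr{H}(y|x_F)=\mscr{H}(\{y\}|\overline{C_F})$, so any such $\mf{h}$ satisfies $y\notin\mf{h}$ and $C_F\cu\mf{h}$. If \emph{no} $\mf{h}_0\in F$ satisfied $\mf{h}_0\cu\mf{h}$, then each $\mf{h}_0\cap\mf{h}^*$ would be non-empty; combined with the pairwise intersection inside the filter $\s\supseteq F$, this would make $F\cup\{\mf{h}^*\}$ a pairwise-intersecting family of convex sets, and Helly's Theorem would produce a point in $C_F\cap\mf{h}^*$, contradicting $C_F\cu\mf{h}$. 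Hence some $\mf{h}_0\in F\cu\s$ is contained in $\mf{h}$, and the filter property of $\s$ then forces $\mf{h}\in\s$: the wall of $\mf{h}$ must lie in the domain of $\s$, and the alternative selection $\mf{h}^*$ is excluded because $\mf{h}_0\cap\mf{h}^*=\emptyset$ would break pairwise intersection in $\s$.

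\textbf{Main obstacle.} The whole argument pivots on the key estimate $\mscr{H}(y|x_F)\cu\s$: this is the only place where the filter hypothesis is used substantively, and it requires a simultaneous double application of Helly's Theorem --- one stroke to pull $\mf{h}$ \emph{into} $\s$ and one to expel $\mf{h}^*$ \emph{from} it. Everything else --- the Cauchy property of $(x_F)$, the convergence to some $x\in X$, and the final reduction to $\text{Adj}_x$ --- follows formally from completeness, Lemma~\ref{gates vs inclusions}, and Lemma~\ref{adjacency}.
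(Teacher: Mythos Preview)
Your proof is correct and takes a genuinely different route from the paper's. The paper argues by transfinite induction: starting from an arbitrary $x_0$, at each successor step one finds $\mf{h}\in\s$ with $d(x_{\beta},\mf{h})>0$ (using Lemma~\ref{adjacency}) and replaces $x_{\beta}$ by its gate on $\overline{\mf{h}}$; at limit ordinals one passes to the limit of the resulting Cauchy net. Termination is forced by the summability bound $\sum_{\beta}d(x_{\beta},x_{\beta+1})\leq\wh{\nu}(\s\setminus\s_{x_0})$. Your argument instead constructs the target point in one stroke, as the limit of the net $x_F=\pi_{\overline{C_F}}(y)$ indexed by finite $F\cu\s$; the inclusion $\mscr{H}(y|x_F)\cu\s\setminus\s_y$ (via Helly and the filter property) gives a uniform bound, monotonicity along the net yields Cauchyness, and the single containment $\s\setminus\s_x\cu\text{Adj}_x$ finishes. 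Both proofs ultimately rest on the same three ingredients---completeness, the filter hypothesis, and Lemma~\ref{adjacency}---but your packaging is more direct and avoids ordinals entirely; the paper's version, by contrast, makes the role of the tangibility bound slightly more transparent, since it appears explicitly as the quantity controlling termination.
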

\begin{proof}
If there exists $x_0\in X$ such that $\wh{\nu}(\s\cap\mscr{H}(x_0|y))=0$ for all $y\in X$, then $\wh{\nu}(\s\setminus\s_{x_0})=0$. Indeed, given any $u,v\in X$, we can set $m:=m(x_0,u,v)$ and $(\s\setminus\s_{x_0})\cap\mscr{H}(u|v)=\s\cap\mscr{H}(m|v)\cu\s\cap\mscr{H}(x_0|v)$, the latter being morally null.

If instead $\wh{\nu}(\s\cap\mscr{H}(x_0|y))>0$ for some $x_0,y\in X$, then there exists $z\in I(x_0,y)$ such that $z\neq x_0$ and $\mscr{H}(x_0|z)\cu\s$. Indeed, by Lemma~\ref{adjacency} there exists $\mf{h}\in\s\cap\mscr{H}(x_0|y)$ such that $d(x_0,\mf{h})>0$. Letting $z$ be the gate for $(x_0,\overline{\mf{h}})$, if $\mf{k}\in\mscr{H}(x_0|z)$ we have $\mf{h}\cu\overline{\mf{h}}\cu\mf{k}$; hence, the fact that $\s$ is a filter and $\mf{h}\in\s$ implies that $\mf{k}\in\s$. 

Now, we construct a countable ordinal $\eta$ and an injective net $(x_{\alpha})_{\alpha\leq\eta}$ such that all the following are satisfied:
\begin{enumerate}
\item $(\s\setminus\s_{x_{\alpha+1}})\sqcup(\s_{x_{\alpha+1}}\setminus\s_{x_{\alpha}})=(\s\setminus\s_{x_{\alpha}})$ for each $\alpha\leq\eta$;
\item if $\alpha$ is a limit ordinal, $\bigcap_{\beta<\alpha}\s\setminus\s_{x_{\beta}}=\s\setminus\s_{x_{\alpha}}$ up to a morally null set;
\item $\wh{\nu}\left(\s\setminus\s_{x_{\eta}} \right)=0$.
\end{enumerate}
The net is constructed by transfinite induction, starting with an arbitrary choice of $x_0$. Suppose that $x_{\beta}$ has been defined for all $\beta<\alpha$. If $\alpha$ is a limit ordinal, the inductive hypothesis implies that the disjoint union of the sets $\s_{x_{\beta+1}}\setminus\s_{x_{\beta}}$, for $\beta<\alpha$, is contained in $\s\setminus\s_{x_0}$, up to a morally null set. Hence $\sum_{\beta<\alpha}d(x_{\beta},x_{\beta+1})\leq\wh{\nu}(\s\setminus\s_{x_0})<+\infty$ and $(x_{\beta})_{\beta<\alpha}$ is a Cauchy net. We define $x_{\alpha}$ to be its limit; we only need to check condition~2 and it follows from Lemma~\ref{adjacency}.

If $\alpha=\beta+1$, we look at $\s\setminus\s_{x_{\beta}}$; if this is morally null, we stop and set $\eta=\beta$. Otherwise, we can find $y\in X$ so that $\wh{\nu}\left(\s\cap\mscr{H}(x_{\beta}|y)\right)>0$ and we have already shown that there exists $z\in X$ with $\mscr{H}(x_{\beta}|z)\cu\s$; we set $x_{\alpha}:=z$. By construction $\s_{x_{\alpha}}\setminus\s_{x_{\beta}}\cu\s\setminus\s_{x_{\beta}}$ and, since $\s$ is a filter, we have $\s\cap(\s_{x_{\beta}}\setminus\s_{x_{\alpha}})=\emptyset$; in particular, $\s\setminus\s_{x_{\alpha}}\cu\s\setminus\s_{x_{\beta}}$. From this we immediately get condition~1, i.e.\ $\s\setminus\s_{x_{\beta}}=(\s\setminus\s_{x_{\alpha}})\sqcup(\s_{x_{\alpha}}\setminus\s_{x_{\beta}})$.

We conclude by remarking that, since $d\left(x_{\alpha+1},x_{\alpha}\right)>0$ for all ordinals $\alpha$ and $\sum d(x_{\beta},x_{\beta+1})\leq\wh{\nu}(\s\setminus\s_{x_0})<+\infty$, the process must terminate for some \emph{countable} ordinal $\eta$.
\end{proof}

\begin{cor}\label{tangible}
\begin{enumerate}
\item If $X$ is locally convex and $\s\cu\mscr{H}$ is a tangible ultrafilter, there exists $x\in X$ such that $\wh{\nu}(\s\triangle\s_x)=0$. 
\item If $X$ has finite rank and $\s\cu\mscr{H}$ is a tangible filter, there exists a convex subset $C\cu X$ such that $\wh{\nu}(\s\triangle\s_C)=0$.
\end{enumerate}
\end{cor}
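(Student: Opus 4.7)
The plan for Part~1 is to apply Lemma~\ref{tangible lemma} to the tangible ultrafilter $\s$ (which is a fortiori a tangible filter), obtaining $x \in X$ with $\wh{\nu}(\s \setminus \s_x) = 0$. Since both $\s$ and $\s_x$ are ultrafilters, for every $\mf{h} \in \s_x \setminus \s$ the halfspace $\mf{h}^*$ lies in $\s$ and not in $\s_x$; this yields the identification $\s_x \setminus \s = (\s \setminus \s_x)^*$. The involution $*$ preserves $\nu$ (as remarked after Theorem~\ref{median->halfspaces}), and since $\mscr{H}(x|y) = *(\mscr{H}(y|x))$ it also preserves $\wh{\nu}$ via the supremum definition over pairwise-disjoint halfspace-intervals. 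Consequently $\wh{\nu}(\s_x \setminus \s) = \wh{\nu}(\s \setminus \s_x) = 0$, giving $\wh{\nu}(\s \triangle \s_x) = 0$.

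For Part~2, I would pick $x_0$ from Lemma~\ref{tangible lemma} and define
\[ C := \bigcap_{\mf{k} \in \s \cap \s_{x_0}} \mf{k}, \]
a convex set containing $x_0$. Each such $\mf{k}$ satisfies $C \subseteq \mf{k}$, so $\mf{k} \in \s_C$; hence $\s \setminus \s_C \subseteq \s \setminus \s_{x_0}$ is morally null. For the reverse inclusion, fix $\mf{h} \in \s_C \setminus \s$. Since $\s$ is an upward-closed filter and $\mf{h} \notin \s$, no $\mf{k} \in \s$ can satisfy $\mf{k} \subseteq \mf{h}$. Using compactness of intervals in finite rank (Corollary~\ref{intervals are compact new}) so that closed convex sets are gate-convex (Lemma~\ref{gate-convexity vs compact intervals}), set $y_0 := \pi_{\overline{\mf{h}^*}}(x_0)$. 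Suppose some $\mf{k} \in \s \cap \s_{x_0}$ had $y_0 \notin \mf{k}$: since $y_0 \in I(x_0, z)$ for every $z \in \overline{\mf{h}^*}$, an interval argument forces $\overline{\mf{h}^*} \subseteq \mf{k}^*$, hence $\mf{k} \subseteq \mf{h}$, which by upward-closedness gives the contradiction $\mf{h} \in \s$. Therefore $y_0 \in \mf{k}$ for every $\mf{k} \in \s \cap \s_{x_0}$, so $y_0 \in C \subseteq \mf{h}$. Corollary~\ref{open/closed} rules out $\mf{h}$ open (otherwise $y_0 \in \mf{h}^* \cap \mf{h} = \emptyset$), so $\mf{h}$ is closed and $y_0 \in \mf{h} \cap \overline{\mf{h}^*} = \partial \mf{h}$, equivalently $\mf{h} \in \text{Adj}_{y_0}^*$.

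The main obstacle is to bound the measure of these remaining ``boundary-touching'' halfspaces: closed $\mf{h} \in \s_C$ whose gate-point $y_0 = \pi_{\overline{\mf{h}^*}}(x_0)$ lies on $\partial \mf{h}$. For each fixed $y$ the set $\text{Adj}_y^*$ is morally null (Lemma~\ref{adjacency} together with $*$-invariance of $\wh{\nu}$), but $y_0$ varies with $\mf{h}$ and an uncountable union of null sets need not be null. The plan is to restrict to each halfspace-interval $\mscr{H}(u|v)$ and project to the compact interval $I := I(u,v)$: the projection $\pi_I(y_0)$ then lies in $\partial(\mf{h} \cap I)$, while the set of all such projected gate-points inside $I$ is separable (indeed relatively compact). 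A countable-approximation argument inside $I$, combined with Lemma~\ref{adjacency} relative to $I$ and Corollary~\ref{open/closed}, reduces the uniform bound to countably many pointwise applications. This last compactness-and-separability step---essentially the only place finite rank is used substantively in Part~2---is the most technically delicate piece.
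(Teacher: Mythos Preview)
Your Part~1 is correct and matches the paper's argument exactly.

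For Part~2, your setup is right and agrees with the paper: define $C$ as the intersection of all $\mf{k}\in\s\cap\s_{x_0}$, observe $\s\setminus\s_C$ is morally null, and show that every $\mf{h}\in\s_C\setminus\s$ is closed by projecting into $\overline{\mf{h}^*}$ and landing in $C$. However, your final step is a genuine gap. You correctly identify the obstacle --- the adjacency point $y_0=\pi_{\overline{\mf{h}^*}}(x_0)$ depends on $\mf{h}$, so you are facing an uncountable union of null sets --- but your proposed ``compactness-and-separability'' fix inside each interval is only a sketch, and it is not clear it can be made to work: the projected gate-points $\pi_I(y_0)$ still form an a~priori uncountable set with no obvious covering by countably many adjacencies.

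The paper avoids this difficulty entirely with one change of viewpoint: rather than projecting the fixed point $x_0$ to the variable set $\overline{\mf{h}^*}$, project the \emph{variable} point $u$ (the one indexing the halfspace-interval) to the \emph{fixed} set $\overline{C}$. Concretely, given $u\in X$, let $\overline u:=\pi_{\overline C}(u)$. For any $\mf{k}\in\s_C\setminus(\s\cup\s_u)$ you have $u\in\mf{k}^*\subseteq\overline{\mf{k}^*}$; your own argument (or the paper's version of it) shows $C\cap\overline{\mf{k}^*}\neq\emptyset$, so by part~1 of Lemma~\ref{gates vs inclusions} the projection $\overline u$ lands in $\overline{\mf{k}^*}$. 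Since $\mf{k}$ is closed and $\overline u\in\overline C\subseteq\mf{k}$, this gives $\mf{k}\in\text{Adj}_{\overline u}^*$. Thus $(\s_C\setminus\s)\cap\mscr{H}(u|v)\subseteq\text{Adj}_{\overline u}^*$ for every $u,v$, with $\overline u$ depending only on $u$ --- a single morally-null set per halfspace-interval, and you are done by Lemma~\ref{adjacency}.
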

\begin{proof}
Let $\s$ be a tangible filter and let $x\in X$ be such that $\wh{\nu}(\s\setminus\s_x)=0$, as provided by Lemma~\ref{tangible lemma}. If $\s$ is an ultrafilter, we immediately obtain $\wh{\nu}(\s\triangle\s_x)=0$. Otherwise, suppose that $X$ has finite rank and let $C$ be the intersection of all $\mf{h}\in\s\cap\s_x$; since $x\in C$, this is a nonempty convex subset and the filter $\s_C$ contains $\s\cap\s_x$. In particular, we have $\wh{\nu}(\s\setminus\s_C)=0$. 

Suppose $\mf{k}\in\s_C\setminus\s$ and consider the gate-projection $\pi\colon X\ra\overline{\mf{k}^*}$. Observe that no $\mf{h}\in\s\cap\s_x$ can be contained in $\mf{k}$ or we would have $\mf{k}\in\s$; hence every $\mf{h}\in\s\cap\s_x$ intersects $\mf{k}^*$. Part~1 of Lemma~\ref{gates vs inclusions} then implies that, given any $y\in C$, the projection $\pi(y)$ still lies in $C$; in particular, $C\cap\overline{\mf{k}^*}\neq\emptyset$. If $\mf{k}$ were open, we would have $\overline{\mf{k}^*}=\mf{k}^*$ and the previous statement would contradict the fact that $C\cu\mf{k}$; thus, $\mf{k}$ is closed by Corollary~\ref{open/closed}.

If $u\in X$ and $\overline u$ is its gate-projection to $\overline C$, we have $\overline u\in\overline{\mf{k}^*}$ for every $\mf{k}\in\s_C\setminus(\s\cup\s_u)$, since $C\cap\overline{\mf{k}^*}\neq\emptyset$. Since every such $\mf{k}$ is closed, we have $\overline u\in\overline{\mf{k}^*}\cap\mf{k}$ and $\s_C\setminus(\s\cup\s_u)\cu\text{Adj}_{\overline u}^*$. The arbitrariness of $u$ and Lemma~\ref{adjacency} imply that $\wh{\nu}(\s_C\setminus\s)=0$. 
\end{proof}

An immediate consequence of part~1 of Corollary~\ref{tangible} is:

\begin{cor}\label{M(X)=X}
For every complete, locally convex median space $X$, the isometric embedding $X\hookrightarrow\mscr{M}(X)$ is surjective. In particular, the spaces $X$, $\mc{M}(X)$ and $\mscr{M}(X)$ are isometric.
\end{cor}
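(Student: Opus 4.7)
The plan is essentially to unwind the definitions and invoke part~1 of Corollary~\ref{tangible}. Recall that $\mscr{M}(X)$ consists of tangible ultrafilters on $\mscr{H}$, identified when they have $\wh{\nu}$-null symmetric difference, and that the map $X\hookrightarrow\mscr{M}(X)$ sends $x$ to the equivalence class of $\s_x$. The map is isometric by parts~2 and~3 of Lemma~\ref{properties of nu hat}, so the only content is surjectivity.

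First I would fix a tangible ultrafilter $\s\cu\mscr{H}$ representing an arbitrary element of $\mscr{M}(X)$. By part~1 of Corollary~\ref{tangible}, applied to the locally convex median space $X$, there exists a point $x\in X$ such that $\wh{\nu}(\s\triangle\s_x)=0$. By the identification used to define $\mscr{M}(X)$, this means that $\s$ and $\s_x$ represent the same element of $\mscr{M}(X)$; since $\s_x$ is by definition the image of $x$ under $X\hookrightarrow\mscr{M}(X)$, the chosen class lies in the image of $X$. As $\s$ was arbitrary, the embedding $X\hookrightarrow\mscr{M}(X)$ is surjective.

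For the ``In particular'' clause, we combine this surjectivity with the fact that the composition $X\hookrightarrow\mc{M}(X)\hookrightarrow\mscr{M}(X)$ factors through $\mc{M}(X)$: all three maps are isometric, and since the overall composition is a surjective isometry, the intermediate inclusion $\mc{M}(X)\hookrightarrow\mscr{M}(X)$ must also be a surjective isometry, hence $X$, $\mc{M}(X)$ and $\mscr{M}(X)$ are pairwise isometric.

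There is essentially no obstacle here, since all the real work has already been carried out in Corollary~\ref{tangible}; the main point is just that ``surjective onto $\mscr{M}(X)$'' is the correct reformulation of the content of that corollary modulo the identification by $\wh{\nu}$-null symmetric difference.
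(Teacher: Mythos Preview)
Your proposal is correct and is exactly the intended argument: the paper simply declares the corollary an immediate consequence of part~1 of Corollary~\ref{tangible}, and you have spelled out precisely this implication, together with the factorisation $X\hookrightarrow\mc{M}(X)\hookrightarrow\mscr{M}(X)$ that yields the ``in particular'' clause.
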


An interesting consequence of Corollary~\ref{M(X)=X} is the following.

\begin{cor}
If $X$ is locally convex with corresponding SMH-structure $(X,\mscr{H},\mscr{B},\nu)$. Then $\text{Aut}(X,\mscr{H},\mscr{B},\nu)=Isom(X)$.
\end{cor}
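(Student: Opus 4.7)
The plan is to verify both inclusions separately; both reduce to the fact, built into Theorem~\ref{median->halfspaces}, that the SMH-structure $(X,\mscr{H},\mscr{B},\nu)$ is intrinsically encoded by the metric $d$.

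For $\mathrm{Isom}(X)\subseteq\mathrm{Aut}(X,\mscr{H},\mscr{B},\nu)$, I would take $g\in\mathrm{Isom}(X)$ and observe that $g$ preserves the median map, since $m(x,y,z)$ is the unique intersection of the metrically-defined intervals $I(x,y)$, $I(y,z)$, $I(z,x)$. Consequently $g$ permutes the convex subsets of $X$, so the pullback $g^*(\mf{h}):=g^{-1}(\mf{h})$ defines a bijection $\mscr{H}\to\mscr{H}$. A direct computation gives $(g^*)^{-1}(\mscr{H}(x|y))=\mscr{H}(g(x)|g(y))$, so $g^*$ is $\mscr{B}$-measurable, and since $\nu(\mscr{H}(x|y))=d(x,y)=d(g(x),g(y))$ the measures $(g^*)_*\nu$ and $\nu$ agree on all halfspace-intervals; by the uniqueness of the Carath\'eodory extension used to construct $\nu$ they agree on the full $\sigma$-algebra $\mscr{B}$.

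For the reverse inclusion, I would take $f\in\mathrm{Aut}(X,\mscr{H},\mscr{B},\nu)$ and invoke the observation made immediately after the definition of homomorphism of SMW's (and transferred verbatim to SMH's, as the text points out): any such homomorphism satisfies $\nu(\mscr{H}(f(x)|f(y)))=\nu((f^*)^{-1}(\mscr{H}(x|y)))=\nu(\mscr{H}(x|y))$, and thus preserves the pseudo-metric $\mathrm{pdist}_\nu$. Theorem~\ref{median->halfspaces} identifies $\mathrm{pdist}_\nu$ with $d$, so $f$ is an isometry of $X$.

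The only delicate point I anticipate is the uniqueness step in the first direction: $\nu$ need not be $\sigma$-finite on all of $\mscr{H}$ when $X$ is non-separable (cf.~Proposition~\ref{separable}), so one must argue that agreement on halfspace-intervals forces agreement on the generated $\sigma$-algebra. However, both $\nu$ and $(g^*)_*\nu$ arise as the Carath\'eodory extension of the same premeasure on the ring generated by halfspace-intervals (as in the construction invoked by Theorem~\ref{median->halfspaces}), so they coincide on $\mscr{B}$; alternatively, a routine $\pi$-$\lambda$ argument applied inside each individual halfspace-interval, where the measure is finite, suffices. Every other step is a direct unravelling of definitions.
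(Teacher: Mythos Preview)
Your argument is correct and, in fact, more self-contained than the route the paper indicates. The paper offers no explicit proof, presenting the statement simply as ``an interesting consequence of Corollary~\ref{M(X)=X}'', i.e.\ of the identification $X\simeq\mc{M}(X)$ for complete, locally convex median spaces. The implicit idea is presumably that every automorphism of the SMH-structure extends to an isometry of $\mc{M}(X)$ (as remarked after Proposition~\ref{walls->median}), and this is then an isometry of $X$ once one knows $X=\mc{M}(X)$. Your approach bypasses this detour: for $\mathrm{Aut}\subseteq\mathrm{Isom}(X)$ you invoke directly the observation, already recorded in the paper, that SMH-homomorphisms preserve $\mathrm{pdist}_{\nu}=d$; for $\mathrm{Isom}(X)\subseteq\mathrm{Aut}$ you argue via the naturality of the Carath\'eodory construction. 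In particular, your proof makes no use of local convexity or completeness, so it actually establishes a slightly sharper statement than the one the paper records. Your handling of the uniqueness step is also the right one: since $g^*$ permutes the generating ring and preserves the premeasure, it preserves the Carath\'eodory outer measure and hence $\nu$ itself, so the $\sigma$-finiteness issue you flag does not in fact arise.
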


An analogous result holds for the SMW-structure.

\section{Compactifying median spaces and algebras.}\label{the roller cptfication}

\subsection{The zero-completion of a median algebra.}\label{the zero-completion}

Let $M$ be a median algebra. We denote by $\mscr{J}(M)$ (or simply by $\mscr{J}$) the poset of all intervals $I(x,y)$ with $x,y\in M$, ordered by inclusion; singletons are allowed. Note that the poset $(\mscr{J},\cu)$ is not a directed set. Still, whenever $I,I',I''\in\mscr{J}$ and $I\cu I'\cu I''$, we have $\pi_I|_{I''}=\pi_I|_{I'}\o\pi_{I'}|_{I''}$ and it makes perfect sense to consider the inverse limit 
\[\varprojlim_{I\in\mscr{J}}I:=\mleft\{(x_I)_I\in\prod_{I\in\mscr{J}}I ~\middle | ~ \pi_{I\cap J}(x_I)=\pi_{I\cap J}(x_J), ~ \forall I,J\in\mscr{J},~ I\cap J\neq\emptyset\mright\} .\]
Note that $I\cap J\in\mscr{J}$ whenever $I,J\in\mscr{J}$; indeed, we showed in Section~\ref{median algebras} that every gate-convex subset of $I$ is itself an interval.

The product of all $I\in\mscr{J}$ has a structure of median algebra given by considering median maps component by component; the inverse limit $\varprojlim I$ also inherits a structure of median algebra and we have a monomorphism 
\begin{align*}
\iota\colon M & \hookrightarrow \varprojlim I \\
x & \mapsto (\pi_I(x))_I.
\end{align*}
\begin{defn}
We denote the median algebra $\varprojlim I$ simply by $\overline M$ and refer to it as the \emph{zero-completion} of $M$.
\end{defn}

This terminology comes from \cite{Bandelt-Meletiou} where the same notion is defined from a different perspective; more on this in Remark~\ref{original zero-completion}. 

We also have a monomorphism $i\colon M\hookrightarrow 2^{\mscr{H}}$ given by mapping $x\mapsto\s_x$; the space $2^{\mscr{H}}$ can be endowed with the product median algebra structure and the product topology. Here the set $2=\{0,1\}$ is equipped with the discrete topology and its unique structure of median algebra.

\begin{defn}\label{double dual}
The closure of $i(M)$ in $2^{\mscr{H}}$ with the induced median-algebra structure will be denoted by $M^{\o\o}$ and we will refer to it as the \emph{double dual} of $M$ (compare \cite{Roller}).
\end{defn}

\begin{lem}\label{double dual and zero-completion}
\begin{enumerate}
\item The median algebra $M^{\o\o}$ coincides with the subset of $2^{\mscr{H}}$ consisting of ultrafilters on $\mscr{H}$.
\item There is a monomorphism $j\colon\overline M\hookrightarrow M^{\o\o}$ such that $i=j\o\iota$.
\end{enumerate}
\end{lem}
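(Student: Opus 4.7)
The plan is to show that the ultrafilters form a closed subset of $2^{\mscr{H}}$ containing $i(M)$, and that conversely every ultrafilter lies in the closure of $i(M)$. Closedness is straightforward: being an ultrafilter is the conjunction of the finitary conditions ``exactly one of $\mf{h},\mf{h}^*$ belongs to $\s$'' and ``if $\mf{h}\cap\mf{k}=\emptyset$ then not both $\mf{h},\mf{k}$ lie in $\s$'', both closed in the product topology. Each $i(x)=\s_x$ is an ultrafilter, yielding the inclusion $M^{\o\o}\cu\{\text{ultrafilters}\}$. For the reverse, given an ultrafilter $\s$ and finitely many $\mf{h}_1,\ldots,\mf{h}_n\in\mscr{H}$, the sides $\mf{h}_i^{\eps_i}$ belonging to $\s$ are pairwise intersecting convex sets, so Helly's Theorem produces $x$ in their common intersection and $\s_x$ agrees with $\s$ on the $\mf{h}_i$.

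\textbf{Part (2), definition.} I will define $j$ coordinatewise. Given $\underline z=(z_I)_I\in\overline M$ and a halfspace $\mf{h}$ with both sides nonempty, declare $\mf{h}\in j(\underline z)$ iff $z_{I(a,b)}\in\mf{h}$ for some (equivalently, any) $a\in\mf{h}^*$ and $b\in\mf{h}$; trivial halfspaces $M,\emptyset$ are included or excluded tautologically. The entire argument hinges on showing this condition does not depend on $(a,b)$.

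\textbf{Well-definedness: the main obstacle.} The key lemma to prove is: if $I,J\in\mscr{J}$ and the intersection $I\cap J$ meets both sides of $\mf{h}$, then $z_I\in\mf{h}$ iff $z_J\in\mf{h}$. This follows by combining the inverse-limit condition $\pi_{I\cap J}(z_I)=\pi_{I\cap J}(z_J)$ with Proposition~\ref{walls in convex}, which says that for a gate-convex set $C$ meeting both sides of $\mf{h}$, a point $z\in M$ lies in $\mf{h}$ iff $\pi_C(z)$ does. To compare $I=I(a,b)$ and $J=I(a',b')$ (which may be disjoint), I insert the intermediate interval $I(a,b')$ and use Lemma~\ref{gates vs inclusions}(1) with Proposition~\ref{retractions new} to compute
\[I(a,b)\cap I(a,b')=I\bigl(a,\, m(a,b,b')\bigr).\]
Since $m(a,b,b')\in I(b,b')\cu\mf{h}$ while $a\in\mf{h}^*$, this intersection meets both sides of $\mf{h}$; symmetrically, $I(a,b')\cap I(a',b')=I\bigl(m(a,a',b'),\, b'\bigr)$ also meets both sides. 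Two applications of the key lemma then chain to give the independence of $(a,b)$.

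\textbf{Remaining checks.} With $j$ well-defined, the section property of $j(\underline z)$ is immediate. The pairwise-intersection property is obtained by contradiction: if $\mf{h}_1,\mf{h}_2\in j(\underline z)$ were disjoint, choosing $b_i\in\mf{h}_i$ gives $b_1\in\mf{h}_2^*$ and $b_2\in\mf{h}_1^*$; applying well-definedness to the pair $(b_2,b_1)$ for $\mf{h}_1$ and to $(b_1,b_2)$ for $\mf{h}_2$ would force $z_{I(b_1,b_2)}\in\mf{h}_1\cap\mf{h}_2=\emptyset$. Preservation of medians is automatic since both $\overline M$ and $2^{\mscr{H}}$ carry coordinatewise median operations and each characteristic function $\chi_\mf{h}\colon M\ra\{0,1\}$ is a median morphism. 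The identity $i=j\o\iota$ reduces to the observation that $\pi_{I(a,b)}(x)=m(a,b,x)$ lies in $\mf{h}$ iff $x$ does. Finally, for injectivity, I apply the key lemma to the containment $I(a,b)\cu I$ for $a\in\mf{h}^*\cap I$, $b\in\mf{h}\cap I$ to conclude $(z_k)_I\in\mf{h}$ iff $\mf{h}\in j(\underline z_k)$ for every halfspace $\mf{h}$ of $M$ meeting both sides of $I$; these correspond bijectively to the halfspaces of $I$ via Proposition~\ref{walls in convex} and separate points of $I$, so $j(\underline z_1)=j(\underline z_2)$ implies $(z_1)_I=(z_2)_I$ for all $I\in\mscr{J}$.
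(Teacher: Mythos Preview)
Your proof is correct and follows essentially the same approach as the paper. Part~(1) is identical. For part~(2), both arguments amount to showing that the ``some $I$'' and ``every $I$'' definitions of $j(\underline z)$ coincide; you do this by chaining through the intermediate interval $I(a,b')$ and invoking your key lemma twice, while the paper constructs a single common super-interval $K=I(x_I,x_J)$ containing sub-intervals $I'\cu I$ and $J'\cu J$ on which $\mf h$ is still a wall, obtaining the contradiction $x_K\in\mf h\cap\mf h^*$ directly. Your route is slightly more modular (the key lemma is a clean standalone statement and is reused for injectivity), whereas the paper's route is marginally shorter since it avoids computing intersections like $I(a,b)\cap I(a,b')$ explicitly.
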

\begin{proof}
Ultrafilters on $\mscr{H}$ form a closed subset of $2^{\mscr{H}}$ with the product topology; thus every element of $M^{\o\o}$ is an ultrafilter. Every neighbourhood of an ultrafilter $\s\cu\mscr{H}$ is of the form $\left\{A\cu\mscr{H}\mid \mf{h}_1,...,\mf{h}_k\in A,~ \mf{h}_{k+1}^*,...,\mf{h}^*_n\not\in A\right\}$, where $\mf{h}_1,...,\mf{h}_n$ lie in $\s$ and hence intersect pairwise. Any such neighbourhood intersects $i(M)$ as it contains $i(x)$ for any $x\in\mf{h}_1\cap ... \cap\mf{h}_n$; the latter is nonempty by Helly's Theorem. Hence, every ultrafilter on $\mscr{H}$ lies in $M^{\o\o}$.

Regarding part~2, given $(x_I)_I\in\varprojlim I$ we will construct an ultrafilter $\s\cu\mscr{H}$ such that $\s\cap\mscr{H}(I)=\s_{x_I}\cap\mscr{H}(I)$ for all $I\in\mscr{J}$. This ultrafilter is unique and the corresponding map $j\colon\overline M\ra M^{\o\o}$ is easily seen to be a monomorphism. It suffices to show that the sets
\begin{align*}
\Om_1&:=\{\mf{h}\in\mscr{H}\mid \exists I\text{ s.t. } x_I\in\mf{h}\in\mscr{H}(I)\}, \\
\Om_2&:=\{\mf{h}\in\mscr{H}\mid x_I\in\mf{h}, \forall I \text{ s.t. } \mf{h}\in\mscr{H}(I)\},
\end{align*}
coincide and set $\s:=\Om_1=\Om_2$. This is indeed an ultrafilter: $\Om_1$ contains at least one side of every wall of $M$, while any two halfspaces in $\Om_2$ intersect.

The inclusion $\Om_2\cu\Om_1$ is immediate; proving the other amounts to showing that $x_J\in\mf{h}\in\mscr{H}(J)$ and $x_I\not\in\mf{h}\in\mscr{H}(I)$ cannot happen at the same time. We argue by contradiction. 

Observe that $m_I:=\pi_I(x_J)\in\mf{h}$ and $m_J:=\pi_J(x_I)\in\mf{h}^*$ by part~1 of Lemma~\ref{gates vs inclusions}. Let $I':=I(x_I,m_I)$ and $J':=I(x_J,m_J)$; since $x_I\in I'\cu I$, we have $x_{I'}=\pi_{I'}(x_I)=x_I$ and similarly $x_{J'}=x_J$. Set $K:=I(x_I,x_J)$. Since $I'\cu K$ and $J'\cu K$, we have $\pi_{I'}(x_K)=x_I$ and $\pi_{J'}(x_K)=x_J$. However, $\{\mf{h},\mf{h}^*\}\in\mscr{W}(I')\cap\mscr{W}(J')$, so the previous equalities imply that $x_K\in\mf{h}^*$ and $x_K\in\mf{h}$, respectively, a contradiction.
\end{proof}

\begin{cor}\label{recognising gate-projections}
\begin{enumerate}
\item The embedding $\iota\colon M\ra\overline M$ has convex image. Thus, given $x,y\in M$, the notion of $I(x,y)$ is the same in $M$ and $\overline M$.
\item For every $J\in\mscr{J}$, the projection $p_J\colon\overline M\ra J$ that $\overline M$ inherits from $\prod I$ is precisely the gate-projection $\pi_J\colon\overline M\ra J$.
\end{enumerate}
\end{cor}
\begin{proof}
Consider points $x,y\in M$ and set $J:=I(x,y)\in\mscr{J}$. Given $\underline z\in\overline M$, we write $\underline z_I$ instead of $p_I(\underline z)$. Assuming $m(\iota(x),\iota(y),\underline z)=\underline z$, we will show that $\underline z=\iota(\underline z_J)$. This yields $\underline z\in\iota(M)$ and proves part~1.

If we had $\underline z\neq\iota(\underline z_J)$, there would exist $I\in\mscr{J}$ such that $\underline z_I\neq p_I\iota(\underline z_J)$; note that $p_I\iota(\underline z_J)=\pi_I(\underline z_J)$, where $\pi_I$ denotes the gate-projection $M\ra I$. Let $\mf{h}\in\mscr{H}(M)$ be a halfspace lying in $\mscr{H}(\pi_I(\underline z_J)|\underline z_I)$; by Proposition~\ref{walls in convex}, we have $\mf{h}\in\mscr{H}(\underline z_J|\underline z_I)$. The equality $\Om_1=\Om_2$ in the proof of Lemma~\ref{double dual and zero-completion} now shows that $\mf{h}\not\in\mscr{H}(J)$. Since $\underline z_J\in\mf{h}^*$, we have $J\cu\mf{h}^*$; in particular, $\pi_I(x)$ and $\pi_I(y)$ lie in $\mf{h}^*$. However, since $m(\iota(x),\iota(y),\underline z)=\underline z$, we have $m(\pi_I(x),\pi_I(y), \underline z_I)=p_Im(\iota(x),\iota(y),\underline z)=\underline z_I\in\mf{h}$, a contradiction.

For part~2, note that $\pi_J(\underline z)=m(x,y,\underline z)$ lies in $\iota(M)$ by part~1. Thus, 
\[m(x,y,\underline z)=p_Jm(x,y,\underline z)=m(p_J(x),p_J(y),p_J(\underline z))=m(x,y,\underline z_J)=\underline z_J.\]
\end{proof}

The median algebras $\overline M$ and $M^{\o\o}$ can coincide; for instance, this is the case for $0$-skeleta of a ${\rm CAT}(0)$ cube complexes. However, the following example shows that $\overline M$ and $M^{\o\o}$ differ in general.

\begin{ex}\label{double dual of N}
Consider the median algebras $N=\N$ and $M=\N\cup\{+\infty\}$; in both cases $m(x,y,z)=y$ if $x\leq y\leq z$. For every $k\in\N$, both $M$ and $N$ have a wall $\mf{w}_k$ separating $k$ and $k+1$; in $M$, there is an additional wall $\mf{w}_{\infty}$ separating $\N$ and $+\infty$. Observe that $M=\overline N=N^{\o\o}$ and, since $M=I(0,+\infty)$, we also have $M=\overline M$. 

However, $M^{\o\o}=M\cup\{\infty_-\}$, where $\infty_-$ is represented by the ultrafilter that picks the side containing $+\infty$ for every wall $\mf{w}_k$ and the side containing $\N$ for the wall $\mf{w}_{\infty}$. The point $\infty_-$ is ``bigger than any natural number'', but still ``smaller than $+\infty$''; also compare Remark~\ref{bad ultrafilters on R}. 
\end{ex}

Given $a\in M$, we say that a convex subset $C\cu M$ is \emph{$a$-directed} if $a\in C$ and, for every $x,y\in C$, there exists $z\in C$ such that $x,y\in I(a,z)$.

\begin{lem}\label{recognising the zero-completion}
Fix $a\in M$. There is a one-to-one correspondence between points of $\overline M$ and gate-convex, $a$-directed subsets $C\cu M$. Points $b\in M\cu\overline M$ correspond to intervals $I(a,b)$.
\end{lem}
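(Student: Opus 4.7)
The plan is to define $\Phi\colon\overline M\to\{C\cu M\mid C\text{ gate-convex, }a\text{-directed}\}$ by $\Phi(p):=\iota^{-1}(I(\iota(a),p))$, where the interval is taken inside $\overline M$, and to prove $\Phi$ is a bijection. Since $\iota(M)$ is convex in $\overline M$, we have $I(\iota(u),\iota(v))=\iota(I(u,v))$ for all $u,v\in M$; hence $C_p:=\Phi(p)$ is convex and contains $a$, and the special case $C_{\iota(b)}=I(a,b)$ follows at once for $b\in M$.

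To see that $C_p$ is gate-convex I will exhibit, for each $x\in M$, the gate as $c_x:=\pi_{I(a,x)}(p)=m_{\overline M}(\iota(a),\iota(x),p)$. By Corollary~\ref{recognising gate-projections}, $c_x\in I(a,x)\cu M$; moreover $c_x$ is simultaneously the gate of $\iota(x)$ in the interval $I(\iota(a),p)\cu\overline M$, so it lies in $C_p$. The gate property of intervals in $\overline M$ then gives $c_x\in I(\iota(x),\iota(c))$ for every $c\in C_p$, whence $c_x\in I(x,c)$ in $M$. For $a$-directedness, given $y_1,y_2\in C_p$, set $z:=\pi_{I(y_1,y_2)}(p)\in I(y_1,y_2)\cu C_p$ (using convexity of $C_p$). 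The gate property gives $z\in I(p,y_i)$ inside $\overline M$; combined with $y_i\in I(\iota(a),p)$, the standard identity
\[I(\iota(a),y_i)\cap I(y_i,p)=I(y_i,m_{\overline M}(\iota(a),y_i,p))=\{y_i\}\]
(the last equality because $m_{\overline M}(\iota(a),p,y_i)=y_i$) forces $y_i\in I(\iota(a),z)$ via a median-axiom manipulation; pulling back along $\iota$ yields $y_i\in I(a,z)$ in $M$.

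For injectivity: since $c_b=x_{I(a,b)}^p$ is the gate of $b\in M$ in $C_p$, the gate-convex set $C_p$ determines $c_b$ for every $b\in M$, hence all projections of $p$ onto intervals of the form $I(a,b)$. The inverse-limit compatibility, together with Lemma~\ref{gates vs inclusions}(3,4) and Proposition~\ref{retractions new}, then propagates this information to recover $x_I^p$ for every $I\in\mscr{J}$, so $p$ is determined by $C_p$. For surjectivity, given a gate-convex, $a$-directed $C$, one declares $x_{I(a,b)}^{p_C}:=b$ for each $b\in C$; the family $\{I(a,b):b\in C\}$ is directed by the $a$-directedness of $C$, and for $I\cu I(a,b)$ with $b\in C$ one sets $x_I^{p_C}:=\pi_I(b)$. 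Lemma~\ref{gates vs inclusions}(3) combined with the $a$-directedness ensures independence of the witness $b$, while the remaining intervals are handled via the compatibility constraints imposed by Proposition~\ref{retractions new}. The identity $C_{p_C}=C$ is finally verified through the equivalence $y\in C\iff y=c_y^{p_C}$.

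I expect the main obstacle to be the consistency of the surjectivity construction: showing that two witnesses $b,b'\in C$ with $I\cu I(a,b)\cap I(a,b')$ give $\pi_I(b)=\pi_I(b')$ reduces, via the $a$-directedness, to comparing both with a common upper bound $b''\in C$, and a similar care is required to propagate the definition to intervals not lying inside any $I(a,b)$ while keeping coherence with the inverse-limit structure of $\overline M$.
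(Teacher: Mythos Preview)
Your map $\Phi(p)=I(\iota(a),p)\cap M$ and the verification that $C_p$ is gate-convex and $a$-directed are essentially the paper's argument; the gate $c_x=m(a,x,p)$ and the witness $z=m(y_1,y_2,p)$ for directedness are exactly what the paper uses, though the paper dispatches $y_i\in I(a,z)$ in one line rather than via the intersection identity you invoke. Your injectivity is also fine in principle: knowing all gates $c_b=\pi_{I(a,b)}(p)$ for $b\in M$ does determine $p$, since any $\mf{h}\in\mscr{H}(M)$ separating two candidates lies in $\mscr{H}(I(a,b))$ for some $b$ on the correct side of $\mf{h}$. You gesture at this with ``propagates via Lemma~\ref{gates vs inclusions}'', which is vaguer than it should be, but the claim is true.

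The real gap is in surjectivity. You only define $x_I^{p_C}$ for intervals $I$ contained in some $I(a,b)$ with $b\in C$, i.e.\ for $I\subseteq C$, and then assert that ``the remaining intervals are handled via the compatibility constraints''. This is not a construction: compatibility in the inverse limit tells you that $\pi_J(x_I)=x_J$ when $J\subseteq I$, so it only constrains $x_I$ from below via subintervals $J\subseteq C$, and there is no a priori reason such constraints pin $x_I$ down to a single point of $I$. Worse, your final check $C_{p_C}=C$ via ``$y\in C\iff y=c_y^{p_C}$'' requires knowing $x_{I(a,y)}^{p_C}$ for $y\notin C$, which is precisely a coordinate you have not defined. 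The paper avoids this by defining $x_I$ for \emph{every} interval $I$ in one stroke: $\pi_I(C)$ is a gate-convex subset of $I$, hence itself an interval, one of whose endpoints is $\pi_I(a)$; the other endpoint is $x_I$. This works uniformly, and compatibility then follows from Proposition~\ref{retractions new}. If you want to salvage your approach, you would need to prove that the directed system $\{\pi_I(b):I\subseteq I(a,b),\,b\in C\}$ extends \emph{uniquely} to an element of $\varprojlim I$; but proving uniqueness here is essentially equivalent to reconstructing the paper's endpoint description.
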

\begin{proof}
If $C\cu M$ is gate-convex and $a$-directed, the projection $C_I:=\pi_I(C)$ is a gate-convex, $\pi_I(a)$-directed subset of $I$ by part~2 of Lemma~\ref{gates vs inclusions}. It follows that there exists a (unique) point $x_I\in C_I$ such that $C_I=I(\pi_I(a),x_I)$. By Proposition~\ref{retractions new}, we obtain a point $\xi_C:=(x_I)_I\in\varprojlim I$.

Conversely, given $\xi\in\overline M$, we can consider the interval $I(a,\xi)\cu\overline M$ and set $C_{\xi}:=I(a,\xi)\cap M$. Since $M$ is convex in $\overline M$, the map $u\mapsto m(a,\xi,u)$ takes $M$ into itself and it is a gate-projection $M\ra C_{\xi}$ by Proposition~\ref{retractions new}. If $x,y\in C_{\xi}$, we have $x,y\in I(a,z)$ with $z:=m(x,y,\xi)\in C_{\xi}$. Thus $C_{\xi}$ is gate-convex and $a$-directed. 

Observe that, setting $z:=m(a,\xi,\pi_I(\xi))$, we have $\pi_I(z)=\pi_I(\xi)$ and $z\in I(a,\xi)\cap M$; hence, $\pi_I(I(a,\xi)\cap M)=I(\pi_I(a),\pi_I(\xi))$ for all $I\in\mscr{J}$, i.e.\ $\xi=\xi_{C_{\xi}}$ for every $\xi\in\overline M$. We conclude by observing that $C_1\neq C_2$ implies $\xi_{C_1}\neq\xi_{C_2}$; indeed, if $x\in C_1\setminus C_2$ and $J:=I(a,x)$, we have $\pi_J(\xi_{C_1})=x$ and $\pi_J(\xi_{C_1})\neq x$, since $\pi_J(C_1)=J$ and $\pi_J(C_2)=C_2\cap J\neq J$.
\end{proof}

\begin{rmk}\label{original zero-completion}
When zero-completions were originally defined in \cite{Bandelt-Meletiou}, points of $\overline M$ were compatible meet-semilattice operations, see Theorem~1 in \emph{op.\ cit.}. The terminology ``zero-completion'' is due to the fact that all semilattice operations have a zero in $\overline M$, while they need not have one in $M$. By Lemma~\ref{recognising the zero-completion} above and Theorem~5.5 in \cite{Bandelt-Hedlikova}, our definition of $\overline M$ yields the same object. Zero-completions do not seem to have been studied outside of \cite{Bandelt-Meletiou}; we will develop their theory further in the next sections, especially in the case when $M$ arises from a median space. 
\end{rmk}

\begin{lem}\label{C bar in M bar}
If $C\cu M$ is gate-convex, the zero-completion $\overline C$ canonically embeds into $\overline M$ as a gate-convex subset with $\overline C\cap M=C$.
\end{lem}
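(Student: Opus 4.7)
The plan is to define the embedding $j\colon \overline{C} \hookrightarrow \overline{M}$ componentwise. For each $J \in \mscr{J}(M)$, part~2 of Lemma~\ref{gates vs inclusions} gives that $\pi_C(J)$ is gate-convex in $C$, hence an interval, so $\pi_C(J) \in \mscr{J}(C)$. For $\xi = (x_I)_I \in \overline{C}$, I set
\[
j(\xi)_J := \pi_J\bigl(x_{\pi_C(J)}\bigr),
\]
which reduces to $x_J$ when $J \in \mscr{J}(C)$. Verifying $j(\xi) \in \overline{M}$ amounts to proving the inverse-limit compatibility: for $J_1, J_2 \in \mscr{J}(M)$ with $K := J_1 \cap J_2 \neq \emptyset$, I would establish the auxiliary identity $\pi_C(K) = \pi_C(J_1) \cap \pi_C(J_2)$ using parts~3 and~4 of Lemma~\ref{gates vs inclusions}: every $c \in \pi_C(J_1) \cap \pi_C(J_2)$ satisfies $\pi_C \pi_{J_i}(c) = c$, so $y := \pi_K(c) = \pi_{J_1}\pi_{J_2}(c) \in K$ projects under $\pi_C$ back to $c$. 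Consistency of $\xi$ in $\overline{C}$ on the triple $\pi_C(J_1), \pi_C(J_2), \pi_C(K)$, combined with the identity $\pi_K \circ \pi_{\pi_C(K)} = \pi_K \circ \pi_C$ (parts~2 and~4 of Lemma~\ref{gates vs inclusions}), then yields $\pi_K(j(\xi)_{J_1}) = \pi_K(j(\xi)_{J_2})$. Finally $j$ is a morphism of median algebras by applying Proposition~\ref{retractions new} to each $\pi_J$.

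Injectivity is immediate from $j(\xi)_I = x_I$ for $I \in \mscr{J}(C)$. For the identification $\overline{C} \cap M = C$, the inclusion $C \subseteq \overline{C} \cap M$ follows from checking $j \circ \iota_C = \iota_M|_C$, which reduces via part~4 of Lemma~\ref{gates vs inclusions} to the identity $\pi_J \pi_C \pi_J = \pi_J \pi_C$ acting on $C$. Conversely, if $j(\xi) = \iota_M(x)$ for $x \in M$, picking any $y \in C$ and setting $J := I(x, y)$ gives $J \cap C \neq \emptyset$, whence $\pi_C(J) = J \cap C$ by part~1 of Lemma~\ref{gates vs inclusions}; then $x = j(\xi)_J = \xi_{J \cap C} \in C$.

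For gate-convexity, I would define $r\colon \overline{M} \to \overline{C}$ by restriction, $r(\eta)_I := \eta_I$ for $I \in \mscr{J}(C) \subseteq \mscr{J}(M)$; compatibility on $\overline{M}$ restricts to that of $\overline{C}$, so $r$ is well-defined. The claim is that $\phi := j \circ r$ is the gate-projection onto $j(\overline{C})$, verified through Proposition~\ref{retractions new}: $\phi(m(\eta_1, \eta_2, \eta_3)) = m(\phi(\eta_1), \phi(\eta_2), \eta_3)$. Componentwise at $J$, applying Proposition~\ref{retractions new} to $\pi_J$ reduces the identity to
\[
m\bigl(a, b, \pi_J(\eta_{3,\pi_C(J)})\bigr) = m\bigl(a, b, \eta_{3, J}\bigr) \qquad \text{for } a, b \in G := \pi_J(\pi_C(J)),
\]
and since $G$ is gate-convex, medians of points in $G$ with an arbitrary third argument factor through $\pi_G$. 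So everything boils down to the key identity $\pi_J(\eta_{\pi_C(J)}) = \eta_G$ for $\eta \in \overline{M}$.

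This last identity is the main obstacle, since inverse-limit compatibility in $\overline{M}$ is imposed only on intersecting intervals, whereas $\pi_C(J)$ and $G$ can well be disjoint. I would establish it via Lemma~\ref{recognising the zero-completion}: fixing $a \in C$, the point $\eta \in \overline{M}$ corresponds to a gate-convex, $a$-directed subset $E \subseteq M$, with $\eta_I$ the non-$\pi_I(a)$ endpoint of the interval $\pi_I(E)$. Parts~2 and~4 of Lemma~\ref{gates vs inclusions} combine to give $\pi_G = \pi_J \circ \pi_{\pi_C(J)}$ as maps $M \to G$, so $\pi_G(E) = \pi_J(\pi_{\pi_C(J)}(E))$; since gate-projections carry intervals to intervals by Proposition~\ref{retractions new}, the endpoints of $\pi_G(E)$ are $\pi_G(a)$ and $\pi_J(\eta_{\pi_C(J)})$, identifying the latter with $\eta_G$ as required.
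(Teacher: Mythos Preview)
Your proof is correct, but takes a genuinely different route from the paper's.

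The paper first constructs the gate-projection directly on $\overline M$ via the formula $\pi(\xi)_I:=\pi_I\pi_C(\xi_I)$; with this choice, the identity $\pi(m(\xi_1,\xi_2,\xi_3))=m(\pi(\xi_1),\pi(\xi_2),\xi_3)$ required by Proposition~\ref{retractions new} is immediate componentwise, since $\pi_I\pi_C$ is itself a gate-projection (to $\pi_I(C)$). The image of $\pi$ is then identified with $\overline C$ by restricting the index set to $\mscr J(C)$; both injectivity and surjectivity of this identification are argued through the ultrafilter description of $\overline M$ provided by Lemma~\ref{double dual and zero-completion}.

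You instead build the embedding $j\colon\overline C\to\overline M$ explicitly and pair it with the restriction map $r$. Your composite $\phi=j\o r$ actually agrees with the paper's $\pi$: indeed $\pi_J\pi_C(\eta_J)=\pi_G(\eta_J)=\eta_G$ by compatibility, while your key identity gives $\pi_J(\eta_{\pi_C(J)})=\eta_G$ as well. The price of your formula is precisely that key identity, which you establish via the $a$-directed--set description of $\overline M$ from Lemma~\ref{recognising the zero-completion}; your argument there (that $\pi_G(E)=\pi_J(\pi_{\pi_C(J)}(E))$ forces the two candidate endpoints of this interval to coincide) is sound. The intermediate claim $\pi_C(K)=\pi_C(J_1)\cap\pi_C(J_2)$ also goes through: your hint unpacks to $\pi_C\pi_K(c)=\pi_C\pi_{J_1}\pi_{J_2}(c)=\pi_C\pi_{J_1}\pi_C\pi_{J_2}(c)=\pi_C\pi_{J_1}(c)=c$, using part~4 of Lemma~\ref{gates vs inclusions} once.

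What each approach buys: the paper's formula makes the gate-projection verification a one-liner but then invokes the double-dual machinery of Lemma~\ref{double dual and zero-completion} to handle the identification with $\overline C$. Your approach avoids ultrafilters entirely, staying within the inverse-limit and directed-set descriptions, at the cost of a longer verification of the retraction property.
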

\begin{proof}
If $\xi=(x_I)_I\in\varprojlim I$, we set $\pi(\xi):=(\pi_I\pi_C(x_I))_I\in\prod I$. This is an element of $\varprojlim I$ as, if $J\cu K$ are intervals of $M$, we have
\[\pi_J(\pi_K\pi_C(x_K))=\pi_J\pi_C\pi_K(\xi)=\pi_J\pi_C\pi_J\pi_K(\xi)=\pi_J\pi_C\pi_J(\xi)=\pi_J\pi_C(x_J),\]
by Corollary~\ref{recognising gate-projections} and part~4 of Lemma~\ref{gates vs inclusions}. We obtain a map $\pi\colon\overline M\ra\overline M$, which is a gate-projection onto its image, by Proposition~\ref{retractions new}. The image of $\pi$ is the set of those $(x_I)_I$ with $\pi_I\pi_C(x_I)=x_I$, i.e.\ $x_I\in\pi_I(C)$, for all $I\in\mscr{J}$. Restricting the index set to $\mscr{J}(C)$, we obtain a morphism $f\colon\text{im}~\pi\ra\overline C$. Embedding $\overline M\hookrightarrow M^{\o\o}$ as in Lemma~\ref{double dual and zero-completion}, all points of $\text{im}~\pi$ are represented by ultrafilters containing $\s_C$. In terms of ultrafilters, $f$ is the restriction of the map that takes each ultrafilter on $\mscr{H}(M)$ to its intersection with the subset $\mscr{H}(C)\cu\mscr{H}(M)$; in particular, $f$ is injective.

We now show that $f$ is surjective; given $\xi\in\overline C$, let $\s\cu\mscr{H}(C)$ be the ultrafilter representing $\xi$, as provided by Lemma~\ref{double dual and zero-completion}. If $I\in\mscr{J}(M)$, there exist points $u,v\in C$ such that $\pi_I(C)=I(\pi_I(u),\pi_I(v))$; we set $J:=I(u,v)$. If $\xi_J$ is the coordinate of $\xi$ corresponding to $J\in\mscr{J}(C)$, we set $x_I:=\pi_I(\xi_J)$. Note that $x_I$ is represented by the ultrafilter $(\s_C\cap\mscr{H}(I))\sqcup (\s\cap\mscr{H}(I))$ on $\mscr{H}(I)$. In particular, our definition of $x_I$ does not depend on the choice of the points $u,v$ and $(x_I)_I$ satisfies the compatibility condition necessary to define a point $\eta\in\overline M$. It is clear that $\eta\in\text{im}~\pi$ and $f(\eta)=\xi$. Thus, we have identified $\overline C\simeq\text{im}~\pi$; the fact that $\overline C\cap M=C$ is a trivial observation.
\end{proof}

Given $\mf{h}\in\mscr{H}(M)$, there exists a unique halfspace $\wt{\mf{h}}\in\mscr{H}(\overline M)$ satisfying $\wt{\mf{h}}\cap M=\mf{h}$. This can be observed by applying Proposition~\ref{walls in convex} to an interval $I(x,y)$ with $x\in\mf{h}$ and $y\in\mf{h}^*$. 

The halfspace $\wt{\mf{h}}$ can be further characterised by ${\xi\in\wt{\mf{h}}\Leftrightarrow\mf{h}\in j(\xi)}$, where $j$ is as in Lemma~\ref{double dual and zero-completion}. Indeed, Corollary~\ref{recognising gate-projections} implies that, for every interval $I\cu M$, we have $j(\xi)\cap\mscr{H}(I)=\s_{\pi_I(\xi)}\cap\mscr{H}(I)$. If $\mf{h}\in\mscr{H}(I)$, Proposition~\ref{walls in convex} then yields
\[\xi\in\wt{\mf{h}}\Leftrightarrow\pi_I(\xi)\in\mf{h}\Leftrightarrow\mf{h}\in j(\xi).\]

Note that not all halfspaces of $\overline M$ are of the form $\wt{\mf{h}}$; indeed, by convexity of $M$, we have $\mscr{W}(M|\xi)\neq\emptyset$ for every $\xi\in\overline M\setminus M$. An explicit example of such a wall was given in Example~\ref{double dual of N} for the median algebra $N$ (in the notation of the example, we have $\mf{w}_{\infty}\in\mscr{W}(N|{+\infty})$ and $+\infty\in\partial N$).

Nevertheless, we can identify the set of halfspaces of $M$ with a subset $\mscr{H}(M)\cu\mscr{H}(\overline M)$ and any two points of $\overline M$ are separated by an element of $\mscr{H}(M)$. Indeed, if $\xi,\eta\in\overline M$ are distinct, then $\pi_I(\xi)\neq\pi_I(\eta)$ for some interval $I\cu M$ and, for any $\mf{h}\in\mscr{H}(\pi_I(\xi)|\pi_I(\eta))$, we have $\wt{\mf{h}}\in\mscr{H}(\xi|\eta)$.

\begin{lem}\label{rk M bar}
We have $\text{rank}(\overline M)=\text{rank}(M)$.
\end{lem}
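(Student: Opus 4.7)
The plan is to prove both inequalities separately, with the easy direction coming from the monomorphism $\iota\colon M\hookrightarrow\overline{M}$ and the harder direction exploiting the criterion of Lemma~\ref{rank with a subset} applied to the distinguished subset $\mscr{H}(M)\cu\mscr{H}(\overline M)$ of ``extended'' halfspaces $\wt{\mf{h}}$.

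For the inequality $\text{rank}(\overline M)\geq\text{rank}(M)$, I would observe that if $\mf{h}_1,\dots,\mf{h}_k\in\mscr{H}(M)$ are pairwise transverse, then the four intersections $\wt{\mf{h}}_i\cap\wt{\mf{h}}_j$, $\wt{\mf{h}}_i\cap\wt{\mf{h}}_j^*$, $\wt{\mf{h}}_i^*\cap\wt{\mf{h}}_j$, $\wt{\mf{h}}_i^*\cap\wt{\mf{h}}_j^*$ in $\overline M$ all contain the corresponding (nonempty) intersections in $M$ via the convex embedding $\iota$; hence $\wt{\mf{h}}_1,\dots,\wt{\mf{h}}_k$ are pairwise transverse in $\overline M$, and they are distinct because $\wt{\mf{h}}\cap M=\mf{h}$.

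For the reverse inequality, I would invoke Lemma~\ref{rank with a subset} with the family $\mscr{K}:=\{\wt{\mf{h}}\mid\mf{h}\in\mscr{H}(M)\}\cu\mscr{H}(\overline M)$. The remark immediately preceding the statement shows that any two distinct points of $\overline M$ are already separated by some $\wt{\mf{h}}\in\mscr{K}$, so the hypothesis of that lemma is satisfied. Hence $\text{rank}(\overline M)$ equals the maximal cardinality of a pairwise-transverse family contained in $\mscr{K}$. It therefore suffices to show that if $\wt{\mf{h}}_1,\dots,\wt{\mf{h}}_k$ are pairwise transverse in $\overline M$, then the corresponding $\mf{h}_1,\dots,\mf{h}_k$ are pairwise transverse in $M$. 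To see nonemptiness of, say, $\mf{h}_i\cap\mf{h}_j$, I would pick $\xi\in\wt{\mf{h}}_i\cap\wt{\mf{h}}_j$ and use the characterisation $\xi\in\wt{\mf{h}}\Leftrightarrow\mf{h}\in j(\xi)$ from the discussion before the lemma, together with Lemma~\ref{double dual and zero-completion}: since $j(\xi)\cu\mscr{H}(M)$ is an ultrafilter, the halfspaces $\mf{h}_i,\mf{h}_j\in j(\xi)$ must intersect in $M$. The other three intersections are handled identically by choosing witnesses in the appropriate quadrant.

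I do not expect any serious obstacle here; the only subtlety is making sure that the bijective correspondence $\mf{h}\leftrightarrow\wt{\mf{h}}$ preserves transversality in both directions, which is clean once one uses the ultrafilter description of points of $\overline M$. Combining the two inequalities yields $\text{rank}(\overline M)=\text{rank}(M)$.
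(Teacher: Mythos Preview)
Your proposal is correct and follows essentially the same approach as the paper: the paper's one-line proof (``Immediate from the discussion above and Lemma~\ref{rank with a subset}'') is precisely your application of Lemma~\ref{rank with a subset} to the subset $\mscr{K}=\{\wt{\mf{h}}\mid\mf{h}\in\mscr{H}(M)\}\subseteq\mscr{H}(\overline M)$, using the separation property established just before the lemma. You have simply made explicit the verification that transversality of $\wt{\mf{h}}_i,\wt{\mf{h}}_j$ in $\overline M$ is equivalent to transversality of $\mf{h}_i,\mf{h}_j$ in $M$, which the paper leaves to the reader.
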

\begin{proof}
Immediate from the discussion above and Lemma~\ref{rank with a subset}.
\end{proof}

\begin{lem}
If there exists a topology on $M$ for which it is a compact topological median algebra, then $M=\overline M$.
\end{lem}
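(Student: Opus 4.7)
My plan is to leverage the explicit description in Lemma~\ref{recognising the zero-completion}: after fixing a basepoint $a\in M$, points of $\overline M$ are in bijection with gate-convex, $a$-directed subsets $C\cu M$, with $b\in M\cu\overline M$ corresponding to the interval $I(a,b)$. Thus the task reduces to showing that every gate-convex, $a$-directed subset $C\cu M$ is in fact an interval of the form $I(a,b)$ for some $b\in M$; this will force the inclusion $\iota\colon M\hookrightarrow\overline M$ to be surjective.

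Fix such a $C$. By part~1 of Lemma~\ref{compact median algebras}, gate-convex subsets of a compact topological median algebra are closed, so $C$ is compact. I would next introduce on $C$ the preorder $x\preceq y\Leftrightarrow x\in I(a,y)$; reflexivity is clear, and transitivity is forced by the convexity of intervals, since $x\in I(a,y)$ and $y\in I(a,z)$ imply $I(a,y)\cu I(a,z)$ (because $a,y\in I(a,z)$). The $a$-directedness hypothesis on $C$ is precisely the statement that $(C,\preceq)$ is a directed set, so the identity family $(x)_{x\in C}$ constitutes a net in $M$. By compactness this net admits a subnet converging to some point $b\in M$, and since $C$ is closed we have $b\in C$.

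I then claim $C=I(a,b)$. The inclusion $I(a,b)\cu C$ is immediate from the convexity of $C$ together with $a,b\in C$. For the reverse, given any $x\in C$, the $a$-directedness of $C$ and transitivity of $\preceq$ guarantee that the set $\{y\in C\mid x\preceq y\}$ is cofinal in $(C,\preceq)$; hence, along the chosen subnet, eventually $m(a,y,x)=x$. Continuity of the median map then yields $m(a,b,x)=x$, i.e.~$x\in I(a,b)$. Applying Lemma~\ref{recognising the zero-completion} once more, the gate-convex $a$-directed set $C=I(a,b)$ corresponds to the point $\iota(b)\in\overline M$, so the original point of $\overline M$ lies in $\iota(M)$.

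I do not anticipate a substantial obstacle: the entire argument is driven by the structural content of Lemma~\ref{recognising the zero-completion} combined with the standard fact that directed nets in a compact space admit convergent subnets. The only mildly delicate point is arranging the correct preorder on $C$ so that the $a$-directedness hypothesis delivers exactly the cofinality needed to pass from a subnet limit to the equality $C=I(a,b)$, but this is a direct unpacking of the definitions.
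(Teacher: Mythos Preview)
Your argument is correct, but it proceeds quite differently from the paper's. The paper works directly with the inverse-limit description: given $\xi=(x_I)_I\in\overline M$, it considers the convex fibres $C_I:=\pi_I^{-1}(x_I)\cap M$, observes that they pairwise intersect (all contain $\xi$ in $\overline M$, and $M$ is convex there), applies Helly's Theorem to obtain the finite intersection property, and then uses compactness together with continuity of the gate-projections $\pi_I|_M$ (so that each $C_I$ is closed) to produce a point $x$ in the global intersection, which then satisfies $\iota(x)=\xi$. Your route instead leverages Lemma~\ref{recognising the zero-completion} to translate the problem into showing that every gate-convex $a$-directed set is an interval, and then extracts the missing endpoint $b$ as the limit of a subnet of the identity net on $(C,\preceq)$. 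The paper's argument is more self-contained (it does not invoke Lemma~\ref{recognising the zero-completion}) and perhaps more transparent about where compactness enters; yours is more structural, making explicit that compactness supplies a ``maximum'' for the directed order on $C$. Both are short and clean.
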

\begin{proof}
Given $\xi=(x_I)_I\in\varprojlim I$, consider the projections $\pi_I\colon\overline M\ra I$ and set $C_I:=\pi_I^{-1}(x_I)\cu\overline M$. These are convex sets by Proposition~\ref{retractions new} and they pairwise intersect as they all contain $\xi$; moreover, they all intersect $M$, which is convex in $\overline M$. Helly's Theorem implies that $\{C_I\cap M\mid I\in\mscr{J}\}$ has the finite intersection property.

Now endow $M$ with its compact topology. Each $C_I\cap M=(\pi_I|_M)^{-1}(x_I)$ is closed in $M$, hence compact. Thus, the intersection of all the $C_I\cap M$ is nonempty; any $x$ in this intersection satisfies $\pi_I(x)=x_I$ for every interval $I$, i.e.\ $\xi=\iota(x)$.
\end{proof}

In the rest of the section, we suppose that $M$ is a topological median algebra. Endowing the product of all intervals with the product topology, the zero-completion $\overline M$ inherits a topology. The inclusion $\iota\colon M\hookrightarrow\overline M$ is always continuous, but in general not a topological embedding (cf.\ Proposition~\ref{boundary is closed if loc cpt}). Classical examples of this phenomenon are provided by locally infinite trees, since in that case $\overline M$ coincides with the usual Roller compactification.

For instance, if $T$ is a geodesically complete (real or simplicial) tree and $x\in T$ is a point of infinite degree, $\iota(x)$ does not lie in the interior of $\iota(T)$; in particular, $\iota(T)$ is not open in $\overline T$. Another interesting case is when $T$ is a bounded tree and $T\setminus\{x\}$ contains infinitely many connected components of diameter at least $1$; here the map $\iota\colon T\ra\overline T$ is a continuous bijection, but still not a homeomorphism.

The following is an easy observation.

\begin{lem}\label{zero-completion locally convex}
If $M$ is locally convex, $\overline M$ is as well.
\end{lem}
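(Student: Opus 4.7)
The plan is to unwrap the product-topology description of $\overline M$ and reduce local convexity of $\overline M$ to local convexity of each interval $I \in \mscr{J}$, viewed as a topological subspace of $M$.

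First, I would observe that intervals inherit local convexity from $M$. Given $I \in \mscr{J}$, any open neighbourhood $U$ of a point $x \in I$ has the form $U = I \cap U'$ for some open $U' \subseteq M$. Using local convexity of $M$, choose a convex open $W \subseteq U'$ containing $x$; then $W \cap I$ is open in $I$, contains $x$, and is convex (as an intersection of convex sets), hence also convex in $I$. Thus each $I$ is a locally convex topological median algebra.

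Next, fix $\xi = (x_I)_I \in \overline M$. A basic open neighbourhood of $\xi$ in $\overline M$ (with its subspace topology from $\prod_{I \in \mscr{J}} I$) has the form $V = \overline M \cap \prod_I U_I$, where $U_I \subseteq I$ is an open neighbourhood of $x_I$ and $U_I = I$ for all $I$ outside some finite set $\{I_1, \ldots, I_n\} \subseteq \mscr{J}$. Applying the previous step to each $I_k$, I would shrink $U_{I_k}$ to a convex open neighbourhood $V_{I_k} \subseteq U_{I_k}$ of $x_{I_k}$ in $I_k$, and set $V_I := I$ for the remaining $I$. Let $\wt V := \overline M \cap \prod_I V_I$.

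Finally, I need to check that $\wt V$ is open and convex in $\overline M$ and contained in $V$. Openness and the inclusion $\wt V \subseteq V$ are immediate from the construction. For convexity, recall that the median operation on $\overline M$ is inherited from the component-wise median on $\prod_I I$; so for any $\eta, \zeta \in \wt V$ and $\omega \in \overline M$, the $I$-component of $m(\eta, \zeta, \omega)$ is $m_I(\eta_I, \zeta_I, \omega_I)$. Since $V_I$ is convex in $I$ and $\omega_I \in I$ (because $\pi_I(\overline M) \subseteq I$ by construction of the inverse limit), we have $m_I(\eta_I, \zeta_I, \omega_I) \in V_I$ for every $I$, so $m(\eta, \zeta, \omega) \in \wt V$.

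There is no real obstacle here; the whole argument is a bookkeeping exercise once one notices that the defining property of convex subsets of a product median algebra is fibrewise convexity in each factor, and that basic opens in the product topology involve only finitely many factors, so local convexity of the factors transfers to local convexity of the product (and hence of the median-subalgebra $\overline M$). The mildly subtle point to articulate clearly is that in checking convexity of $\wt V$ inside $\overline M$, the third argument $\omega$ ranges over $\overline M$ but its $I$-component still lies in $I$, so fibrewise convexity of each $V_I$ inside $I$ is exactly what is needed.
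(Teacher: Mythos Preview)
Your argument is correct and is precisely the routine verification the paper has in mind; the paper itself omits the proof, labelling the lemma ``an easy observation''. One minor terminological point: the paper defines local convexity as having a basis of convex \emph{neighbourhoods}, not necessarily open ones, so in Step~1 you should say ``choose a convex neighbourhood $W\subseteq U'$'' rather than ``convex open''; then $W\cap I$ is still a (not necessarily open) convex neighbourhood of $x$ in $I$, and the rest of your argument goes through verbatim since local convexity only requires a neighbourhood basis.
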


\begin{lem}\label{M dense}
If $M$ has compact intervals, $\overline M$ is compact and $M$ is dense in $\overline M$.
\end{lem}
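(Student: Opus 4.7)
The plan is to handle compactness and density separately, treating $\overline M$ as a subspace of the Tychonoff product $\prod_{I\in\mscr{J}}I$.

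For compactness, I would first recall that each interval $I\in\mscr{J}$ is compact by hypothesis, so $\prod_{I\in\mscr{J}}I$ is compact by Tychonoff's Theorem. It therefore suffices to show that $\overline M$, as defined by the compatibility relations, is closed. Given $I,J\in\mscr{J}$ with $I\cap J\neq\emptyset$, by part~3 of Lemma~\ref{gates vs inclusions} the set $I\cap J$ is gate-convex, and since gate-convex subsets of an interval are themselves intervals (recalled before Proposition~\ref{retractions new}), we have $I\cap J\in\mscr{J}$. Each interval carries the subspace topology and median-algebra structure from $M$, making it a compact topological median algebra, so Lemma~\ref{compact median algebras} implies that the gate-projections $\pi_{I\cap J}\colon I\to I\cap J$ and $\pi_{I\cap J}\colon J\to I\cap J$ are continuous. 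Since $I\cap J$ is Hausdorff, the locus $\{(y_K)_K\mid\pi_{I\cap J}(y_I)=\pi_{I\cap J}(y_J)\}$ is an equalizer of two continuous maps, hence closed in $\prod_K K$. Intersecting over all such pairs $(I,J)$ shows $\overline M$ is closed in $\prod_K K$, proving compactness.

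For density, I would fix $\xi=(x_I)_I\in\overline M$ and a basic open neighbourhood $U=\{(y_I)_I\mid y_{I_k}\in V_k,\ k=1,\dots,n\}$ of $\xi$, where $V_k\subseteq I_k$ is open and $x_{I_k}\in V_k$. Pick any $a\in M$ and let $C_\xi\subseteq M$ be the gate-convex, $a$-directed subset associated to $\xi$ by Lemma~\ref{recognising the zero-completion}, so $\pi_I(C_\xi)=I(\pi_I(a),x_I)$ for every $I\in\mscr{J}$. For each $k$, choose $c_k\in C_\xi$ with $\pi_{I_k}(c_k)=x_{I_k}$, and iteratively apply the $a$-directedness of $C_\xi$ to produce a single $c\in C_\xi$ with $c_1,\dots,c_n\in I(a,c)$.

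To conclude I would argue that $\pi_{I_k}(c)=x_{I_k}$ for every $k$: on the one hand, $c\in C_\xi$ gives $\pi_{I_k}(c)\in I(\pi_{I_k}(a),x_{I_k})$; on the other, applying $\pi_{I_k}$ to the identity $c_k=m(a,c,c_k)$ and using Proposition~\ref{retractions new} yields $x_{I_k}=\pi_{I_k}(c_k)=m(\pi_{I_k}(a),\pi_{I_k}(c),c_k)$, so $x_{I_k}\in I(\pi_{I_k}(a),\pi_{I_k}(c))$. Writing $a':=\pi_{I_k}(a)$, $u:=\pi_{I_k}(c)$, $v:=x_{I_k}$, the two inclusions read $u=m(a',v,u)$ and $v=m(a',u,v)$, and symmetry of the median in its last two arguments forces $u=v$. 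Hence $\iota(c)$ agrees with $\xi$ in each of the coordinates $I_1,\dots,I_n$, so $\iota(c)\in U\cap\iota(M)$, giving density.

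The main subtlety is not compactness (routine once one checks that $I\cap J$ is itself an interval) but the density step: the poset $\mscr{J}$ is not directed, so one cannot simply pass to a common enlarging interval. The use of the $a$-directed subset $C_\xi$ supplied by Lemma~\ref{recognising the zero-completion} is precisely what bypasses this obstruction, turning the finitely many compatibility constraints into the single statement ``$c_1,\dots,c_n\in I(a,c)$'' for one element $c\in M$.
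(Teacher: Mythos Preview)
Your argument is correct. The compactness half is essentially the paper's (with more detail on why the compatibility conditions cut out a closed set). The density half, however, follows a genuinely different route.

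The paper exploits the ambient $\overline M$ itself: it takes the convex hull $C$ of $I_1\cup\dots\cup I_n$ inside $\overline M$, which is compact by Lemma~\ref{compact median algebras}, contained in $M$ by convexity of $M\subseteq\overline M$, and gate-convex in $\overline M$ by Lemma~\ref{gate-convexity vs compact intervals}. Projecting $\xi$ to $C$ gives a point $x\in M$, and since each $I_k\subseteq C$, part~3 of Lemma~\ref{gates vs inclusions} yields $\pi_{I_k}(x)=\pi_{I_k}(\xi)$ immediately. This is a three-line argument once the compactness of $\overline M$ is in hand.

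Your approach instead goes back inside $M$ via Lemma~\ref{recognising the zero-completion}, using the $a$-directedness of $C_\xi$ to merge the finitely many witnesses $c_k$ into a single $c$. This avoids invoking the hull lemma and works entirely with the intrinsic description of $\overline M$, at the cost of a small median computation to verify $\pi_{I_k}(c)=x_{I_k}$. The paper's route is shorter but relies on more of the structure of $\overline M$ already established; yours is more self-contained.
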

\begin{proof}
Compactness is immediate from the observation that $\overline M$ is a closed subset of $\prod I$. We prove density by showing that for every $\xi\in\overline M$ and for every finite collection of intervals $I_1,...,I_k\in\mscr{J}(M)$, there exists $x\in M$ such that $\pi_{I_i}(x)=\pi_{I_i}(\xi)$ for all $i$. 

Let $C$ be the convex hull of $I_1\cup ...\cup I_k$ in $\overline M$; it is compact by Lemma~\ref{compact median algebras} and $C\cu M$ since $M$ is convex in $\overline M$. Note that $C$ is gate-convex in $\overline M$ by Lemma~\ref{gate-convexity vs compact intervals}; let $\pi\colon\overline M\ra C$ be the corresponding gate-projection. By part~3 of Lemma~\ref{gates vs inclusions}, we can set $x:=\pi(\xi)\in C\cu M$.
\end{proof}

If $M$ has compact intervals and $C\cu M$ is gate-convex, Lemma~\ref{M dense} implies that the subset of $\overline M$ that we identified with the zero-completion $\overline C$ in Lemma~\ref{C bar in M bar} coincides with the closure of $C$ in the topology of $\overline M$.

\subsection{The Roller compactification of a median space.}\label{definition and first properties}

Let $X$ be a complete, locally convex median space with compact intervals throughout this section. This encompasses all finite rank median spaces (see Corollary~\ref{intervals are compact new}), all (possibly infinite dimensional) ${\rm CAT}(0)$ cube complexes and all (possibly infinite rank) complete, connected, locally compact, locally convex median spaces (see \cite{Chatterji-Drutu} for examples).

\begin{defn}
In this context, we will refer to the zero-completion $\overline X$ as \emph{Roller compactification} and to $\partial X:=\overline X\setminus X$ as \emph{Roller boundary}.
\end{defn}

The renaming is justified by the fact that the median metric on $X$ induces an additional structure on $\overline X$, which we shall study in this and the following section. There is a strong analogy with Roller boundaries of ${\rm CAT}(0)$ cube complexes and, indeed, if $X$ is the $0$-skeleton of a ${\rm CAT}(0)$ cube complex, our notion of Roller boundary coincides with the usual one. The following proposition sums up what we already know about $\overline X$; it roughly corresponds to the first and third definitions of $\overline X$ that we gave in the introduction.

\begin{thm}\label{properties of X bar}
\begin{enumerate}
\item The Roller compactification $\overline X$ is a locally convex, compact, topological median algebra. 
\item The inclusion $\iota\colon X\hookrightarrow\overline X$ is a continuous morphism with convex, dense image. 
\item For every closed convex subset $C\cu X$, the closure of $C$ in $\overline X$ is gate-convex and naturally identified with the Roller compactification of $C$.
\item If $X$ is separable, the topology of $\overline X$ is separable and metrisable.
\end{enumerate}
\end{thm}

We remark that $\partial X$ need not be closed, compare Proposition~\ref{boundary is closed if loc cpt} below. In analogy with the space $\mscr{M}(X)$ and Definition~\ref{double dual}, we introduce
\[\overline{\mscr{M}}(X):=\left\{\s\cu\mscr{H}\mid \s\in\wh{\mscr{B}},~\s\text{ ultrafilter}\right\}\Big/{\sim}~,\]
where $\s_1\sim\s_2$ if $\wh{\nu}(\s_1\triangle\s_2)=0$. We can give $\overline{\mscr{M}}(X)$ a median-algebra structure by defining the median map as in part~2 of Example~\ref{L1 are median}. If $X$ has finite rank, all ultrafilters are morally measurable by Lemma~\ref{inseparable sets are morally measurable} and $\overline{\mscr{M}}(X)$ is simply a quotient of $X^{\o\o}$. By Corollary~\ref{M(X)=X}, we have a monomorphism $X\simeq\mscr{M}(X)\hookrightarrow\overline{\mscr{M}}(X)$. The following result corresponds to the second definition of the Roller compactification that we gave in the introduction.

\begin{thm}\label{to highlight}
The map $j\colon\overline{X}\hookrightarrow X^{\o\o}$ introduced in Lemma~\ref{double dual and zero-completion} takes values in the set of morally measurable ultrafilters and it descends to an isomorphism $\overline j\colon\overline X\xrightarrow{\simeq}\overline{\mscr{M}}(X)$ extending $X\hookrightarrow\overline{\mscr{M}}(X)$.
\end{thm}
\begin{proof}
If $\xi=(x_I)_I\in\varprojlim I$, we have $j(\xi)\cap\mscr{H}(I)=\s_{x_I}\cap\mscr{H}(I)$ for every interval $I$, hence $j(\xi)$ is morally measurable. We get a morphism ${\overline j\colon\overline X\ra\overline{\mscr{M}}(X)}$ extending $X\hookrightarrow\overline{\mscr{M}}(X)$. If $\eta=(y_I)_I$ satisfies $\wh{\nu}(j(\xi)\triangle j(\eta))=0$, we have $\wh{\nu}((\s_{x_I}\triangle\s_{y_I})\cap\mscr{H}(I))=0$, i.e.\ $x_I=y_I$, for all $I\in\mscr{J}$. Thus, $\overline j$ is injective. 

If $\s\cu\mscr{H}$ is a morally measurable ultrafilter, each $\s\cap\mscr{H}(I)$ is a morally measurable ultrafilter on $\mscr{H}(I)$ and it is tangible since $\wh{\nu}(\mscr{H}(I))<+\infty$. Corollary~\ref{M(X)=X} provides $z_I\in I$ with $\wh{\nu}((\s_{z_I}\triangle\s)\cap\mscr{H}(I))=0$. We obtain $\zeta:=(z_I)_I\in\varprojlim I$ with $\wh{\nu}(j(\zeta)\triangle\s)=0$; hence, $\overline j$ is also surjective. 
\end{proof}

In general, given $\xi\in\overline X$ and a morally measurable ultrafilter $\s\cu\mscr{H}$ representing $\xi$, we could have $\mf{h}\in\s$ even if $\xi\not\in\wt{\mf{h}}$; see e.g.\ Remark~\ref{bad ultrafilters on R}. However, we have already observed that this does not happen for ${\s=j(\xi)}$. Since $j(x)=\s_x$ for every $x\in X$, we will also denote $j(\xi)$ by $\s_{\xi}$ from now on. This should be viewed as a canonical choice of an ultrafilter representing $\xi$.

\begin{lem}\label{two convergences}
A sequence $(\xi_n)_{n\geq 0}$ in $\overline X$ converges to $\xi\in\overline X$ if and only if $\wh{\nu}\left(\limsup \s_{\xi}\triangle\s_{\xi_n}\right)=0$. 
\end{lem}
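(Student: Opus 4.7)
The plan is to work interval by interval. Set $E_n:=\s_\xi\triangle\s_{\xi_n}$ and, for each $I\in\mscr{J}(X)$, write $p:=\pi_I(\xi)$ and $p_n:=\pi_I(\xi_n)$. Since $j(\xi)$ agrees with $\s_p$ on $\mscr{H}(I)$ (this is the characterisation of $j(\xi)$ recorded just before the lemma), one obtains the identity
\[
E_n\cap\mscr{H}(I)=\mscr{H}(p|p_n)\sqcup\mscr{H}(p_n|p),
\]
which by Theorem~\ref{median->halfspaces} has $\wh\nu$-measure $2d(p,p_n)$. On the other hand, $\overline X$ carries the initial topology from the projections $\pi_I\colon\overline X\ra I$, so $\xi_n\to\xi$ is equivalent to $p_n\to p$ in $I$ for every $I\in\mscr{J}(X)$, equivalently $\wh\nu(E_n\cap\mscr{H}(I))\to 0$ for every $I$.

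For the implication $(\Leftarrow)$, I would assume $\wh\nu(\limsup E_n)=0$, fix an interval $I$, and use that $\limsup E_n\cap\mscr{H}(I)=\limsup_n(E_n\cap\mscr{H}(I))$ is morally null, hence of $\wh\nu$-measure zero. Since $\wh\nu(\mscr{H}(I))<+\infty$, the reverse Fatou lemma in $(\mscr{H}(I),\wh\nu)$ yields
\[
\limsup_n\wh\nu(E_n\cap\mscr{H}(I))\leq\wh\nu\bigl(\limsup_n(E_n\cap\mscr{H}(I))\bigr)=0,
\]
and the characterisation of convergence in the first paragraph gives $\xi_n\to\xi$.

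The forward direction $(\Ra)$ is the main point. Assume $\xi_n\to\xi$ and fix $I$, so $p_n\to p$ in $I$. The plan is to establish the inclusion
\[
\limsup_n E_n\cap\mscr{H}(I)\cu\text{Adj}_p\cup\text{Adj}_p^*.
\]
If $\mf{h}$ lies in the left-hand side, then $\mf{h}$ separates $p$ from $p_n$ for infinitely many $n$; since the side of $\mf{h}$ containing $p$ is fixed, one orientation must recur along a subsequence. If $p\in\mf{h}$ and $p_n\in\mf{h}^*$ infinitely often, letting $n\to\infty$ gives $p\in\mf{h}\cap\overline{\mf{h}^*}$, whence $\mf{h}^*\in\text{Adj}_p$, i.e.~$\mf{h}\in\text{Adj}_p^*$; the opposite orientation analogously puts $\mf{h}\in\text{Adj}_p$. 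Both $\text{Adj}_p$ and $\text{Adj}_p^*$ are morally null by Lemma~\ref{adjacency} (the latter because $\wh\nu$ is invariant under the involution $*$), so $\limsup_n E_n\cap\mscr{H}(I)$ is morally null; letting $I=I(x,y)$ vary over pairs in $X$ yields $\wh\nu(\limsup_n E_n)=0$.

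The one genuine obstacle is isolating the inclusion into $\text{Adj}_p\cup\text{Adj}_p^*$; once this geometric observation is made, Lemma~\ref{adjacency}, whose availability rests on the local convexity of $X$, finishes the forward direction. The converse is then essentially just the reverse Fatou lemma applied on the finite-measure slices $\mscr{H}(I)$.
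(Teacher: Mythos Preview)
Your proof is correct and follows essentially the same approach as the paper's: reduce to coordinatewise convergence in the intervals $I\in\mscr{J}(X)$, and on each $I$ use the adjacency inclusion $\limsup_n E_n\cap\mscr{H}(I)\cu\text{Adj}_p\cup\text{Adj}_p^*$ together with Lemma~\ref{adjacency}. The paper compresses both directions into the single sentence ``Lemma~\ref{adjacency} implies that $\pi_I(\xi_n)\to\pi_I(\xi)$ if and only if $\wh\nu\bigl(\limsup_n(\s_\xi\triangle\s_{\xi_n})\cap\mscr{H}(I)\bigr)=0$'', whereas you make the reverse-Fatou step for $(\Leftarrow)$ and the adjacency inclusion for $(\Rightarrow)$ explicit; the content is the same.
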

\begin{proof}
Given $I\in\mscr{J}$, Lemma~\ref{adjacency} implies that $\pi_I(\xi_n)\ra\pi_I(\xi)$ if and only if 
\[0=\wh{\nu}\left(\limsup_{n\ra+\infty}\s_{\pi_I(\xi)}\triangle\s_{\pi_I(\xi_n)}\right)=\wh{\nu}\left(\limsup_{n\ra+\infty}(\s_{\xi}\triangle\s_{\xi_n})\cap\mscr{H}(I)\right).\]
Since $\xi_n\ra\xi$ if and only if $\pi_I(\xi_n)\ra\pi_I(\xi)$ for all $I\in\mscr{J}$, convergence corresponds precisely to $\limsup \s_{\xi}\triangle\s_{\xi_n}$ being morally null.
\end{proof}

We can endow $\overline X\simeq\overline{\mscr{M}}(X)$ with an \emph{extended metric}
\[d(\s_1,\s_2):=\frac{1}{2}\cdot\wh{\nu}(\s_1\triangle\s_2)\in [0,+\infty];\]
the restriction to $\mscr{M}(X)\simeq X$ coincides with the usual metric on $X$.

\begin{lem}\label{gate-projections are 1-Lip}
\begin{enumerate}
\item The median map $m\colon\overline X^3\ra\overline X$ is $1$-Lipschitz with respect to the extended metric $d$.
\item If $C\cu\overline X$ is closed and convex, the gate-projection $\pi\colon\overline X\ra C$ is $1$-Lipschitz with respect to the extended metric $d$.
\end{enumerate}
\end{lem}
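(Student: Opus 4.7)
My plan is to use the identification $\overline X \simeq \overline{\mscr{M}}(X)$, so that points of $\overline X$ are represented by morally measurable ultrafilters and, by the $L^1$-type description of Example~\ref{L1 are median}(2), the median $m(\sigma_1,\sigma_2,\sigma_3)$ is the set of halfspaces lying in at least two of the $\sigma_i$. For part~1, I would first establish the pointwise inclusion
\[
m(\sigma_1,\sigma_2,\sigma_3)\triangle m(\sigma_1',\sigma_2',\sigma_3') \;\subseteq\; \bigcup_{i=1}^{3}(\sigma_i\triangle\sigma_i')
\]
by the elementary observation that, if $\mf{h}$ has the same membership status in each $\sigma_i$ and $\sigma_i'$, then it lies in both medians or in neither. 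Since $\wh{\mscr{B}}$ is a $\s$-algebra and $\wh\nu$ is subadditive by Proposition~\ref{the semifinite measure}, applying $\wh\nu$ and dividing by $2$ yields $d(m(\xi_1,\xi_2,\xi_3),m(\xi_1',\xi_2',\xi_3'))\le\sum_i d(\xi_i,\xi_i')$.

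For part~2, the first step is to upgrade $C$ to being gate-convex. The space $\overline X$ is compact by Proposition~\ref{properties of X bar}; since its median map is continuous, each interval $I(\xi,\eta)$ is the fixed-point set of $z\mapsto m(\xi,\eta,z)$, hence closed, hence compact. Lemma~\ref{gate-convexity vs compact intervals} then makes $C$ gate-convex, and I set $\pi:=\pi_C$. The core of the argument is the pointwise inclusion
\[
\sigma_{\pi(\xi)}\triangle\sigma_{\pi(\eta)} \;\subseteq\; \sigma_\xi\triangle\sigma_\eta
\]
inside $2^{\mscr{H}(X)}$. Given $\mf{h}\in\mscr{H}(X)$, I pass to its unique extension $\wt{\mf{h}}\in\mscr{H}(\overline X)$ (characterised by $\zeta\in\wt{\mf{h}}\Leftrightarrow\mf{h}\in\sigma_\zeta$; see the discussion preceding Lemma~\ref{rk M bar}) and split cases via Proposition~\ref{walls in convex}. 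If $\wt{\mf{h}}$ crosses $C$, that proposition gives $\wt{\mf{h}}=\pi^{-1}(\wt{\mf{h}}\cap C)$, whence $\mf{h}\in\sigma_{\pi(\xi)}\Leftrightarrow\pi(\xi)\in\wt{\mf{h}}\Leftrightarrow\xi\in\wt{\mf{h}}\Leftrightarrow\mf{h}\in\sigma_\xi$, and likewise for $\eta$, so $\mf{h}$ contributes simultaneously to both sides of the target inclusion. Otherwise $\pi(\overline X)\subseteq C$ sits on a single side of $\wt{\mf{h}}$, so $\mf{h}\in\sigma_{\pi(\xi)}\Leftrightarrow\mf{h}\in\sigma_{\pi(\eta)}$ and the left-hand side receives no contribution from $\mf{h}$. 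Applying $\wh\nu$ to the inclusion and dividing by $2$ concludes.

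The only subtle point I anticipate is the bookkeeping between $\mscr{H}(X)$ and $\mscr{H}(\overline X)$: the extended metric on $\overline X$ is defined from $\wh\nu$ on $\mscr{H}(X)$, while the gate-projection naturally involves $\mscr{H}(\overline X)$, which contains strictly more halfspaces. The extension $\mf{h}\mapsto\wt{\mf{h}}$ is the bridge that allows Proposition~\ref{walls in convex} to operate at the level of $\overline X$ while the final measure-theoretic estimate is taken at the level of $\mscr{H}(X)$.
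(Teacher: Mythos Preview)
Your argument is correct and, for part~2, proceeds exactly as in the paper: split $\mscr{H}(X)$ according to whether $\wt{\mf{h}}$ crosses $C$ or not, deduce $\s_{\pi(\xi)}\triangle\s_{\pi(\eta)}\cu\s_{\xi}\triangle\s_{\eta}$, and apply $\wh\nu$. The paper packages this via the single identity $\s_{\pi(\xi)}=(\s_{\xi}\cap\mscr{H}(C))\sqcup\s_C$ (obtained from part~1 of Lemma~\ref{gates vs inclusions}), which encodes your two cases at once.

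For part~1 there is a minor methodological difference: the paper derives it from part~2 by observing that $z\mapsto m(\xi,\eta,z)$ is the gate-projection to $I(\xi,\eta)$ and perturbing one variable at a time, whereas you give a direct combinatorial inclusion on ultrafilters. Both routes are equally short; yours has the mild advantage of not depending on part~2, while the paper's avoids introducing a separate argument.
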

\begin{proof}
Part~1 follows from from part~2 applied to intervals. Denote by $\s_C$ the set of $\mf{h}\in\mscr{H}$ such that $C\cu\wt{\mf{h}}$ and set $\mscr{H}(C):=\mscr{H}\setminus(\s_C\sqcup\s_C^*)$. By part~1 of Lemma~\ref{gates vs inclusions}, we have $\s_{\pi(\xi)}=(\s_{\xi}\cap\mscr{H}(C))\sqcup\s_C$, for all $\xi\in\overline X$. Thus, for all $\xi,\eta\in\overline X$,
\[2\cdot d(\pi(\xi),\pi(\eta))=\wh{\nu}\left((\s_{\xi}\triangle\s_{\eta})\cap\mscr{H}(C)\right)\leq\wh{\nu}\left(\s_{\xi}\triangle\s_{\eta}\right)=2\cdot d(\xi,\eta).\]
\end{proof}

If $\xi\in\overline X$ and $(\xi_n)_{n\geq 0}$ is a sequence in $\overline X$ with $d(\xi_n,\xi)\ra 0$, part~2 of Lemma~\ref{gate-projections are 1-Lip} shows that ${\pi_I(\xi_n)\ra\pi_I(\xi)}$ for every interval $I\cu X$; hence $\xi_n\ra\xi$ in $\overline X$. The converse does not hold: consider a sequence $(x_n)_{n\geq 0}$ in $X\cu\overline X$ that converges to a point $\xi\in\partial X$ (these exist by Lemma~\ref{M dense}). Corollary~\ref{M(X)=X} shows that $d(x_n,\xi)=+\infty$ for all $n\geq 0$, so $d(x_n,\xi)\not\ra 0$.

The following notion is due to \cite{Guralnik} for ${\rm CAT}(0)$ cube complexes.

\begin{defn}\label{connected components}
A \emph{component} of $\overline X$ is a $\approx$-equivalence class of morally measurable ultrafilters for the equivalence relation
\[\s_1\approx\s_2\xLeftrightarrow{\text{def}^{\underline{\text{n}}}} d(\s_1,\s_2)<+\infty.\]
\end{defn}

Note that the subset $X\cu\overline X$ always forms a single component. Part~2 of Example~\ref{L1 are median} implies the following.

\begin{prop}\label{cc's are median and convex}
The restriction of the metric $d$ to any component of $\overline X$ gives it a structure of median space. Each component is a convex in $\overline X$.
\end{prop}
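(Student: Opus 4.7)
The plan is to transport the question through the isomorphism $\overline j\colon\overline X\xrightarrow{\sim}\overline{\mscr{M}}(X)$ and exploit the set-theoretic description of the median from part~2 of Example~\ref{L1 are median}: a halfspace lies in $m(\s_1,\s_2,\s_3)$ iff it lies in at least two of the $\s_i$, and $\eta\in I(\xi_1,\xi_2)$ iff $\s_{\xi_1}\cap\s_{\xi_2}\cu\s_\eta\cu\s_{\xi_1}\cup\s_{\xi_2}$, always read modulo morally null sets. Moral measurability of every Boolean combination encountered below is automatic from $\wh{\mscr{B}}$ being a $\s$-algebra.

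Convexity falls out immediately. If $\xi_1,\xi_2$ lie in a common component and $\eta\in I(\xi_1,\xi_2)$ in $\overline X$, the interval characterisation gives $\s_\eta\triangle\s_{\xi_1}\cu\s_{\xi_1}\triangle\s_{\xi_2}$ up to null, hence $d(\eta,\xi_1)\leq d(\xi_1,\xi_2)<+\infty$, placing $\eta$ in the same component as $\xi_1$.

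The restriction of $d$ to a component is finite by the definition of a component, symmetric by symmetry of $\triangle$, and satisfies the triangle inequality from $\s_1\triangle\s_3\cu(\s_1\triangle\s_2)\cup(\s_2\triangle\s_3)$; separation is built into the identification of ultrafilters with null symmetric difference. The substantive step is the metric median axiom. Given $\xi_1,\xi_2,\xi_3$ in a single component, I set $\xi:=m(\xi_1,\xi_2,\xi_3)$ in the median algebra $\overline X$, so that $\s_\xi$ consists of the halfspaces lying in at least two of the $\s_{\xi_i}$. A direct case analysis on the eight possible membership patterns yields
\[\s_\xi\triangle\s_{\xi_i}=(\s_{\xi_i}^*\cap\s_{\xi_j}\cap\s_{\xi_k})\sqcup(\s_{\xi_i}\cap\s_{\xi_j}^*\cap\s_{\xi_k}^*)\]
for any permutation $(i,j,k)$ of $(1,2,3)$, and a short verification then gives the disjoint decomposition $(\s_\xi\triangle\s_{\xi_1})\sqcup(\s_\xi\triangle\s_{\xi_2})=\s_{\xi_1}\triangle\s_{\xi_2}$. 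Applying $\wh\nu$ yields $d(\xi_1,\xi_2)=d(\xi_1,\xi)+d(\xi,\xi_2)$, and the other two equalities follow identically.

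For uniqueness, suppose $\eta$ satisfies all three triangle equalities. Equality in $d(\xi_i,\xi_j)=d(\xi_i,\eta)+d(\eta,\xi_j)$, combined with the inclusion $\s_{\xi_i}\triangle\s_{\xi_j}\cu(\s_{\xi_i}\triangle\s_\eta)\cup(\s_\eta\triangle\s_{\xi_j})$, forces equality of these sets up to null and hence disjointness of the two pieces; this in turn is equivalent to the containment $\s_{\xi_i}\cap\s_{\xi_j}\cu\s_\eta\cu\s_{\xi_i}\cup\s_{\xi_j}$ modulo null. Intersecting the three resulting containments pins $\s_\eta$ down, up to null, as the halfspaces in at least two of the $\s_{\xi_i}$, so $\s_\eta=\s_\xi$ and $\eta=\xi$ in $\overline{\mscr{M}}(X)\simeq\overline X$. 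The only genuine obstacle is bookkeeping ``up to morally null'' throughout, which is routine once the isomorphism $\overline j$ is invoked.
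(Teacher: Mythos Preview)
Your argument is correct and is essentially the same approach the paper takes: the paper's proof is a single sentence invoking part~2 of Example~\ref{L1 are median}, and you have simply unpacked that reference in full detail, verifying the median-space axioms directly via the set-theoretic description of medians and intervals rather than appealing to the isometric embedding of $\mc{M}_E$ into $L^1$. The only minor difference is that the paper's route through $L^1$ gets existence and uniqueness of the median point for free (since $L^1$ is median and a component is a median subalgebra of the relevant $\mc{M}_E$), whereas you verify both by hand; the content is the same.
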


The study of components of $\partial X$ will be the subject of Section~\ref{cc's}. 

\begin{prop}\label{boundary is closed if loc cpt}
If $X$ is connected and locally compact, the inclusion $\iota\colon X\ra\overline X$ is a topological embedding.
\end{prop}
\begin{proof}
Given $x_0\in X$, choose $0<\delta<+\infty$ and let $F$ be a finite, $\delta$-dense subset of $\overline{B}_{2\delta}(x_0)$; this exists since $X$ is proper by Proposition~3.7 in Chapter~I.3 of \cite{BH}. Let $\mc{U}$ be the set of points $\xi\in\overline X$ such that $d(\pi_I(\xi),x_0)<\delta$ for all intervals $I=I(x_0,y)$, with $y\in F$; this is a neighbourhood of $x_0$ in $\overline X$. If $\eta\in\overline X$ and $d(\eta,x_0)\geq 2\delta$, there exists $z\in I(x_0,\eta)$ with $d(x_0,z)=2\delta$, since $X$ is geodesic. Choosing $y\in F$ with $d(y,z)<\delta$ we have
\[d(m(\eta,x_0,y),x_0)>d(m(\eta,x_0,z),x_0)-\delta=d(z,x_0)-\delta=\delta.\]
Thus $\mc{U}\cu B_{2\delta}(x_0)\cu X$ and $x_0\in\mc{U}$. Since $x_0$ and $\delta$ were arbitrary, this shows that the map $\iota$ is open.
\end{proof}

Note that connectedness cannot be dropped from the statement of Proposition~\ref{boundary is closed if loc cpt}. For instance, let $T$ be the tree obtained by glueing countably many rays $\{r_n\mid n\in\N\}$ by their origins. Consider the closed subset $X\cu T$ obtained by removing the open interval $(0,n)\cu [0,+\infty)$ from the ray $r_n$, for each $n\geq 0$. The median space $X$ is proper, but the inclusion $\iota\colon X\ra\overline X$ does not have open image; indeed, in $\overline X$ the endpoints at infinity of the rays $r_n$ converge to their common origin.

We conclude this section by presenting one more characterisation of $\overline X$. Fixing $x_0\in X$, we denote by $\mc{C}_{Lip}(X)_{x_0}$ the set of $1$-Lipschitz functions $X\ra\R$ taking $x_0$ to $0$; we endow this space with the topology of pointwise convergence, which is compact
 and coincides with the topology of uniform convergence on compact subsets. The map 
\begin{align*}
B_{x_0}\colon X & \hookrightarrow\mc{C}_{Lip}(X)_{x_0} \\
x & \mapsto d(x,\cdot)-d(x,x_0);
\end{align*}
is continuous and it is customary to refer to $\overline{B_{x_0}(X)}$ as the \emph{horofunction compactification} (or \emph{Busemann compactification}) of $X$; indeed, it does not depend on the basepoint $x_0$. The following is an extension of an unpublished result of U.\ Bader and D.\ Guralnik in the case of ${\rm CAT}(0)$ cube complexes (see e.g.\ the appendix to \cite{Caprace-Lecureux}).

\begin{prop}\label{Busemann compactification}
The identity map of $X$ extends to a homeomorphism between its Roller and Busemann compactifications.
\end{prop}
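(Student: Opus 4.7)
The plan is to construct a continuous extension $\Phi\colon\overline X\to\mc{C}_{Lip}(X)_{x_0}$ of the Busemann map $B_{x_0}$, identify its image with $\overline{B_{x_0}(X)}$, and prove that $\Phi$ is injective. Since $\overline X$ is compact and $\mc{C}_{Lip}(X)_{x_0}$ is Hausdorff, such a $\Phi$ is automatically a homeomorphism onto its image. For $\xi\in\overline X$ and $x\in X$, set
\[ h_\xi(x) := d(x_0, x) - 2\, d\bigl(x_0, \pi_{I(x_0, x)}(\xi)\bigr) \]
and define $\Phi(\xi):=h_\xi$. For $\xi=y\in X$, the gate $m := \pi_{I(x_0, x)}(y) = m(x_0, x, y)$ satisfies $d(y, x)-d(y, x_0) = d(m, x)-d(m, x_0) = d(x_0, x)-2\,d(x_0, m)$, so $h_y=B_{x_0}(y)$ and $\Phi$ extends $B_{x_0}$; clearly $h_\xi(x_0)=0$.

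Continuity of $\Phi$ for the topology of pointwise convergence is immediate from the topology of $\overline X$: if $\xi_\alpha\to\xi$ in $\overline X$ then $\pi_I(\xi_\alpha)\to\pi_I(\xi)$ in each compact metric interval $I$, so $d\bigl(x_0, \pi_{I(x_0, x)}(\xi_\alpha)\bigr)\to d\bigl(x_0, \pi_{I(x_0, x)}(\xi)\bigr)$. Each $h_\xi$ is $1$-Lipschitz, being a pointwise limit of the $1$-Lipschitz Busemann functions $B_{x_0}(y_\alpha)$ for any net $y_\alpha\to\xi$ in $X$ (density by Proposition~\ref{properties of X bar}). Since $\Phi(\overline X)$ is compact (hence closed) and contains $B_{x_0}(X)$, while every $\Phi(\xi)$ lies in $\overline{B_{x_0}(X)}$ by the same density argument, we get $\Phi(\overline X)=\overline{B_{x_0}(X)}$.

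The main obstacle is injectivity. I first establish the change-of-basepoint identity
\[ h_\xi(v) - h_\xi(u) = d(u, v) - 2\, d\bigl(u, \pi_{I(u, v)}(\xi)\bigr) \qquad\text{for all } u, v\in X,~\xi\in\overline X. \]
For $\xi\in X$ this follows from the same gate manipulation as in the first paragraph; the general case passes to the limit by continuity in $\xi$ of both sides. Now assume $h_\xi = h_\eta$ but $\xi\neq\eta$, so that $d\bigl(u, \pi_{I(u, v)}(\xi)\bigr) = d\bigl(u, \pi_{I(u, v)}(\eta)\bigr)$ for all $u, v\in X$. Pick $u_0, v_0\in X$ with $p := \pi_{I(u_0, v_0)}(\xi) \neq q := \pi_{I(u_0, v_0)}(\eta)$, noting $p, q\in X$. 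Apply the identity at $(u, v)=(p, v_0)$: by Lemma~\ref{gates vs inclusions}(3), $\pi_{I(p, v_0)}(\xi) = \pi_{I(p, v_0)}(p) = p$, so $d\bigl(p, \pi_{I(p, v_0)}(\eta)\bigr)=0$. Hence $p$ is the gate of $\eta$ on $I(p, v_0)$ and $p \in I(\eta, v_0)$. Applying the median morphism $\pi_{I(u_0, v_0)}$ (Proposition~\ref{retractions new}) to both sides of $m(\eta, v_0, p)=p$ yields $m(q, v_0, p)=p$, i.e.~$p\in I(q, v_0)$. The symmetric argument at $(u, v)=(q, v_0)$ produces $q\in I(p, v_0)$. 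These two relations give $d(q, v_0) = d(p, q) + d(p, v_0)$ and $d(p, v_0) = d(p, q) + d(q, v_0)$, forcing $d(p, q)=0$ and contradicting $p\neq q$. The crux is the use of $p$ itself as a basepoint, which is legitimate since $p\in X$, to convert the scalar information in $h_\xi$ into two gate relations that pin down $p=q$.
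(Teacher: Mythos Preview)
Your proof is correct and follows the same overall skeleton as the paper's: define an explicit extension of $B_{x_0}$ to $\overline X$ (your formula $h_\xi(x)=d(x_0,x)-2\,d(x_0,\pi_{I(x_0,x)}(\xi))$ is literally a rewriting of the paper's $d(x,m(x,\xi,x_0))-d(x_0,m(x,\xi,x_0))$, since $m(x,\xi,x_0)\in I(x_0,x)$), check continuity coordinate-wise, obtain the image by density, and conclude via compactness.

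The genuine difference is in the injectivity argument. The paper separates $\xi\neq\eta$ by a halfspace $\mf{h}\in\mscr{H}(X)$, chooses $x\in\mf{h}$, sets $u:=m(x_0,x,\eta)$ and $v:=m(x_0,u,\xi)$, and shows directly that $\wt B_{x_0}(\xi)[u]>\wt B_{x_0}(\eta)[u]$; this is very short and uses the wall structure of median spaces. Your route is halfspace-free: you first establish the change-of-basepoint identity $h_\xi(v)-h_\xi(u)=d(u,v)-2\,d(u,\pi_{I(u,v)}(\xi))$ by density, then feed the gates $p,q$ back in as basepoints to force $p\in I(q,v_0)$ and $q\in I(p,v_0)$, hence $p=q$. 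This is a bit longer but has the pleasant feature of using only the median-algebra structure (intervals and gate-projections via Lemma~\ref{gates vs inclusions} and Proposition~\ref{retractions new}) together with the metric on $X$; no appeal to separation by walls is needed. Either way the content is the same: the horofunction determines all the interval projections.
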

\begin{proof}
Since $B_{x_0}(x)[z]=d(z,m(z,x,x_0))-d(x_0,m(z,x,x_0))$, we can construct an extension of $B_{x_0}$ taking values in the space ${\R}^X_{x_0}$ of functions $X\ra\R$ taking $x_0$ to $0$:
\begin{align*}
\wt{B}_{x_0}\colon\overline X & \longrightarrow {\R}^X_{x_0} \\
\xi & \longmapsto d(\cdot,m(\cdot,\xi,x_0))-d(x_0,m(\cdot,\xi,x_0)).
\end{align*}
This is well-defined due to the convexity of $X\cu\overline X$. If $\xi,\eta\in\overline X$ and ${\xi\neq\eta}$, there exists $\mf{h}\in\mscr{H}$ with $\wt{\mf{h}}\in\mscr{H}(\xi|\eta)$. Without loss of generality, we can assume that $x_0\in\mf{h}^*$. Pick a point $x\in\mf{h}$ and set ${u:=m(x_0,x,\eta)}$, ${v:=m(x_0,u,\xi)}$; since $\mf{h}\in\mscr{H}(v|u)$, we have $u\neq v$. In particular, 
\[\wt{B}_{x_0}(\xi)[u]=d(u,v)-d(x_0,v)>-d(x_0,v)>-d(x_0,u)=\wt{B}_{x_0}(\eta)[u].\]
This shows that $\wt{B}_{x_0}$ is injective; we now prove that it is continuous for the topology of pointwise convergence. Given $\xi\in\overline X$, $x\in X$ and $\eps>0$, there exists a neighbourhood $U$ of $\xi$ such that, for every $\eta\in U$, the projections of $\xi$ and $\eta$ to $I(x_0,x)$ are at distance smaller than $\eps/2$. In particular, for $\eta\in U$, the triangle inequality yields
\[\left|\wt{B}_{x_0}(\xi)[x]-\wt{B}_{x_0}(\eta)[x]\right|\leq 2\cdot d(m(x,\xi,x_0),m(x,\eta,x_0))<\eps.\]
Continuity of $\wt{B}_{x_0}$ and Lemma~\ref{M dense} imply that $\wt{B}_{x_0}(\overline X)$ coincides with the Busemann compactification. Finally, since $\overline X$ is compact, $\wt{B}_{x_0}$ is a closed map, hence a homeomorphism.
\end{proof}

As a consequence of the proof of Proposition~\ref{Busemann compactification}, we can define $1$-Lipschitz Busemann functions for points in the Roller boundary.

\begin{cor}
For every $\xi\in\overline X$ and $x_0\in X$, the function $X\ra\R$ defined by
\[z\longmapsto d(z,m(z,\xi,x_0))-d(x_0,m(z,\xi,x_0))\]
is $1$-Lipschitz.
\end{cor}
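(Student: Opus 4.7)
The function in question is precisely $\wt{B}_{x_0}(\xi)$ as introduced in the proof of Proposition~\ref{Busemann compactification}. My plan is to recognise this corollary as essentially already contained in that proof, by making the underlying density argument explicit.

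The key preliminary observation is that, for a point $x\in X$, the median $m(z,x,x_0)$ lies on geodesics from $x$ to both $z$ and $x_0$, so
\[\wt{B}_{x_0}(x)[z] = d(z,m(z,x,x_0)) - d(x_0,m(z,x,x_0)) = d(z,x) - d(x_0,x) = B_{x_0}(x)[z].\]
In particular, for $x\in X$, the function $\wt{B}_{x_0}(x)$ coincides with the usual Busemann-type function $B_{x_0}(x)$, which is manifestly $1$-Lipschitz on $X$.

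Now, by Lemma~\ref{M dense}, $X$ is dense in $\overline X$, so one may choose a net $(x_\alpha)\subseteq X$ converging to $\xi$ in $\overline X$. The proof of Proposition~\ref{Busemann compactification} establishes that $\wt{B}_{x_0}\colon\overline X\to{\R}^X_{x_0}$ is continuous with respect to the topology of pointwise convergence on the target, so for each fixed $z\in X$ one has $\wt{B}_{x_0}(x_\alpha)[z]\to\wt{B}_{x_0}(\xi)[z]$. Passing to the limit in the pointwise inequality $|\wt{B}_{x_0}(x_\alpha)[z_1] - \wt{B}_{x_0}(x_\alpha)[z_2]|\leq d(z_1,z_2)$ then gives the required $1$-Lipschitz estimate for $\wt{B}_{x_0}(\xi)$.

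There is no genuine obstacle here; the corollary is a routine consequence of the work already carried out in Proposition~\ref{Busemann compactification}. Indeed, an equivalent and slightly more conceptual formulation is to note that the proposition identifies $\wt{B}_{x_0}(\overline X)$ with the Busemann compactification $\overline{B_{x_0}(X)}$, which by definition sits inside $\mc{C}_{Lip}(X)_{x_0}$; hence every element of $\wt{B}_{x_0}(\overline X)$, in particular $\wt{B}_{x_0}(\xi)$, is automatically $1$-Lipschitz.
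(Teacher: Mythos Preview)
Your proposal is correct and follows precisely the approach the paper intends: the corollary is stated without proof, merely as ``a consequence of the proof of Proposition~\ref{Busemann compactification}'', and you have correctly unpacked that consequence. Your final conceptual formulation---that $\wt{B}_{x_0}(\overline X)$ coincides with the Busemann compactification, which by definition lies in $\mc{C}_{Lip}(X)_{x_0}$---is exactly the intended one-line justification.
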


\subsection{Components of the Roller boundary.}\label{cc's}

Let $X$ be a complete, locally convex median space with compact intervals. In this section, we study the structure of the median spaces arising as components of $\partial X$. Our first goal is to obtain the following.

\begin{prop}\label{cc's complete}
Components of $\overline X$ are complete. 
\end{prop}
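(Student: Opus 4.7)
The plan is to build the candidate limit $\xi\in\overline{X}$ coordinate-by-coordinate using compactness of intervals, then upgrade topological convergence to convergence in the extended metric via a Fatou-type estimate.

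Let $(\xi_n)$ be a Cauchy sequence for the extended metric $d$ in a component $Y$ of $\overline{X}$. For each interval $I\in\mscr{J}(X)$, Lemma~\ref{gate-projections are 1-Lip} shows that the gate-projection $\pi_I\colon\overline{X}\to I$ is $1$-Lipschitz for $d$, so $(\pi_I(\xi_n))$ is Cauchy in the compact metric space $I$ and converges to some $z_I\in I$. Whenever $I\cap J\neq\emptyset$, part~3 of Lemma~\ref{gates vs inclusions} yields $\pi_{I\cap J}=\pi_{I\cap J}\o\pi_I=\pi_{I\cap J}\o\pi_J$; continuity of $\pi_{I\cap J}$ on the compact intervals $I$ and $J$ then gives $\pi_{I\cap J}(z_I)=\pi_{I\cap J}(z_J)$. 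Hence $\xi:=(z_I)_I$ defines a point of the inverse limit $\overline{X}$, and $\xi_n\to\xi$ in the topology of $\overline{X}$.

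Applying Lemma~\ref{two convergences} gives $\wh{\nu}(\limsup_n \s_\xi\triangle\s_{\xi_n})=0$. Since $\limsup$ commutes with intersection by any fixed set, for every halfspace-interval $\mscr{H}(u|v)$ and for $\nu$-a.e.\ $\mf{h}\in\mscr{H}(u|v)$ we have $\mf{h}\not\in\s_\xi\triangle\s_{\xi_m}$ for all sufficiently large $m$; equivalently $\mathbf{1}_{\s_{\xi_m}}\to\mathbf{1}_{\s_\xi}$ pointwise $\nu$-a.e.\ on $\mscr{H}(u|v)$. Now fix $\eps>0$ and $N$ such that $d(\xi_m,\xi_n)\leq\eps$ for $m,n\geq N$. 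For any $n\geq N$ and any finite disjoint family $\mscr{H}(u_1|v_1),\ldots,\mscr{H}(u_k|v_k)$, classical Fatou on each halfspace-interval (where $\nu$ is finite) gives
\[
\nu\bigl((\s_\xi\triangle\s_{\xi_n})\cap\mscr{H}(u_i|v_i)\bigr)\leq\liminf_{m\to\infty}\nu\bigl((\s_{\xi_m}\triangle\s_{\xi_n})\cap\mscr{H}(u_i|v_i)\bigr).
\]
Summing the finitely many inequalities, exchanging $\sum$ and $\liminf$, and invoking disjointness together with the definition of $\wh{\nu}$,
\[
\sum_{i=1}^k\nu\bigl((\s_\xi\triangle\s_{\xi_n})\cap\mscr{H}(u_i|v_i)\bigr)\leq\liminf_{m\to\infty}\wh{\nu}(\s_{\xi_m}\triangle\s_{\xi_n})=\liminf_{m\to\infty}2\,d(\xi_m,\xi_n)\leq 2\eps.
\]
Taking the supremum over disjoint families yields $\wh{\nu}(\s_\xi\triangle\s_{\xi_n})\leq 2\eps$, i.e.\ $d(\xi,\xi_n)\leq\eps$; hence $d(\xi,\xi_n)\to 0$. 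In particular $d(\xi,\xi_n)<+\infty$ for $n$ large, which forces $\xi\in Y$.

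The main obstacle is the Fatou step: one has to disassemble the global quantity $\wh{\nu}$ into its defining supremum of sums of $\nu$-measures on finite disjoint families of halfspace-intervals (each of finite $\nu$-measure, where classical Fatou applies) and then reassemble by taking the supremum. Everything else is driven by compactness of intervals and $1$-Lipschitzness of gate-projections for the extended metric.
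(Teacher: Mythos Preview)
Your proof is correct. The construction of the limit $\xi$ coordinate-by-coordinate and the verification that it lies in the inverse limit match the paper's argument exactly; your compatibility check via continuity of $\pi_{I\cap J}$ is in fact more explicit than the paper's.

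Where you diverge is in the second half. The paper first isolates a separate result, Proposition~\ref{metric via intervals}, stating that $d(\xi,\eta)=\sup_{I\in\mscr{J}(X)} d(\pi_I(\xi),\pi_I(\eta))$, and then the completeness proof is a two-line computation: $d(\xi,\xi_n)=\sup_I\lim_m d(\pi_I(\xi_m),\pi_I(\xi_n))\leq\limsup_m d(\xi_m,\xi_n)$. You instead bypass Proposition~\ref{metric via intervals} entirely: you feed topological convergence into Lemma~\ref{two convergences} to obtain a.e.\ pointwise convergence of the indicator functions on each halfspace-interval, apply Fatou there, and then reassemble via the defining supremum for $\wh{\nu}$. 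This is a legitimate alternative and arguably more self-contained, since Proposition~\ref{metric via intervals} itself requires a small argument (reducing a finite disjoint family of halfspace-intervals to a single one). The paper's route is cleaner and more modular, as Proposition~\ref{metric via intervals} is reused immediately afterwards to prove that components have compact intervals; your Fatou argument is essentially an unpacked, measure-theoretic version of the same estimate.
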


To do so, we need to relate the extended metric on $\overline X$ to its restriction to the intervals of $X$. 

\begin{prop}\label{metric via intervals}
For every $\xi,\eta\in\overline X$, we have
\[d(\xi,\eta)=\sup_{I\in\mscr{J}(X)} d\left(\pi_I(\xi),\pi_I(\eta)\right).\]
\end{prop}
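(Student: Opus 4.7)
The inequality $\sup_{I\in\mscr{J}(X)}d(\pi_I(\xi),\pi_I(\eta))\leq d(\xi,\eta)$ is immediate from Lemma~\ref{gate-projections are 1-Lip}(2): each $I\in\mscr{J}(X)$ is compact by the standing assumption on $X$, hence closed and gate-convex in $\overline X$, so $\pi_I$ is $1$-Lipschitz for the extended metric. The work lies in the reverse inequality.

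The central observation, already used in the proof of Lemma~\ref{gate-projections are 1-Lip}, is that for every gate-convex $C\cu\overline X$ and every $\zeta\in\overline X$ one has $\s_{\pi_C(\zeta)}=\s_C\sqcup(\s_\zeta\cap\mscr{H}(C))$; applied to both $\xi$ and $\eta$ this yields
\[d(\pi_C(\xi),\pi_C(\eta))=\tfrac{1}{2}\wh{\nu}\bigl((\s_\xi\triangle\s_\eta)\cap\mscr{H}(C)\bigr).\]
Given $M<d(\xi,\eta)$, I would use Lemma~\ref{properties of nu hat}(1) to pick pairwise-disjoint halfspace-intervals $\mscr{H}(x_n|y_n)$ with $\wh{\nu}(\s_\xi\triangle\s_\eta)=\sum_n\nu\bigl((\s_\xi\triangle\s_\eta)\cap\mscr{H}(x_n|y_n)\bigr)$, and then choose $N$ so that the partial sum up to $N$ exceeds $2M$.

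The decisive step is to replace this finite collection of halfspace-intervals by a single interval of $\mscr{J}(X)$. Let $C$ be the convex hull of $\{x_0,y_0,\ldots,x_N,y_N\}$ in $X$; by Lemma~\ref{compact median algebras}(2) applied inductively, $C$ is compact, so Lemma~\ref{gate-convexity vs compact intervals} makes it gate-convex in $\overline X$. Set $u:=\pi_C(\xi)$, $v:=\pi_C(\eta)$; since $X$ is convex in $\overline X$, both lie in $C\cu X$, and $I:=I(u,v)\in\mscr{J}(X)$. As $I\cu C$ are both gate-convex in $\overline X$, part~3 of Lemma~\ref{gates vs inclusions} gives $\pi_I=\pi_I\o\pi_C$, whence $\pi_I(\xi)=\pi_I(u)=u$ and likewise $\pi_I(\eta)=v$. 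Therefore $d(\pi_I(\xi),\pi_I(\eta))=d(u,v)=\tfrac{1}{2}\wh{\nu}((\s_\xi\triangle\s_\eta)\cap\mscr{H}(C))$, and since each $x_n,y_n\in C$ forces $\mscr{H}(x_n|y_n)\cu\mscr{H}(C)$, this quantity is at least $\tfrac{1}{2}\sum_{n\leq N}\nu((\s_\xi\triangle\s_\eta)\cap\mscr{H}(x_n|y_n))>M$.

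The main obstacle is precisely this "collapsing" step of packaging arbitrarily many halfspace-intervals into a single $I\in\mscr{J}(X)$; the trick of routing the gate-projections of $\xi$ and $\eta$ first through the compact convex hull $C$, and then to the interval $I(u,v)\cu C$, side-steps it and delivers an interval in $X$ whose wall-set already captures all the desired halfspace-intervals.
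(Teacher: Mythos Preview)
Your argument is correct. One small care point: Lemma~\ref{compact median algebras}(2) is stated for a \emph{compact} median algebra, so you should take the convex hull inside $\overline X$ rather than $X$; since $X$ is convex in $\overline X$ this hull lies in $X$ anyway, exactly as in the proof of Lemma~\ref{M dense}. With that understood, everything goes through.

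Your route is genuinely different from the paper's. The paper avoids convex hulls entirely: it picks a finite family of disjoint halfspace-intervals $\mscr{H}(x_i|y_i)$ nearly realising $d(\xi,\eta)$ with $n$ \emph{minimal}, and then shows $n=1$ by the merging trick
\[(\s_{\xi}\setminus\s_{\eta})\cap\bigl(\mscr{H}(x_1|y_1)\sqcup\mscr{H}(x_2|y_2)\bigr)\cu(\s_{\xi}\setminus\s_{\eta})\cap\mscr{H}\bigl(m(\eta,x_1,x_2)\,\big|\,m(\xi,y_1,y_2)\bigr),\]
which replaces any two halfspace-intervals by a single one without loss. Your approach is more geometric, packaging all the points at once through the compact hull $C$ and the nested gate-projection identity $\pi_I=\pi_I\o\pi_C$; it trades the explicit median computation for an appeal to Lemmata~\ref{compact median algebras}(2) and~\ref{gates vs inclusions}(3). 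The paper's trick is more elementary (no hull or compactness needed beyond what is already present in the definition of $\wh{\nu}$), while yours makes the role of the interval $I(u,v)$ more transparent.
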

\begin{proof}
The inequality $\geq$ follows from Lemma~\ref{gate-projections are 1-Lip}. Given $\eps>0$, we will produce an interval $I\cu X$ with $d\left(\pi_I(\xi),\pi_I(\eta)\right)\geq d(\xi,\eta)-\eps$. By the definition of $\wh{\nu}$, there exist points $x_1,...,x_n,y_1,...,y_n\cu X$ such that $\mscr{H}(x_i|y_i)$ are pairwise-disjoint and
\[\sum_{k=1}^n\wh{\nu}\left((\s_{\xi}\setminus\s_{\eta})\cap\mscr{H}(x_i|y_i)\right)\geq d(\xi,\eta)-\eps.\] 
Suppose that $n$ is minimal among the integers for which such an inequality holds; we will show that $n=1$, which will conclude the proof. Suppose for the sake of contradiction that $n\geq 2$ and set ${u:=m(\eta,x_1,x_2)}$, ${v:=m(\xi,y_1,y_2)}$. Observe that 
\[(\s_{\xi}\setminus\s_{\eta})\cap\left(\mscr{H}(x_1|y_1)\sqcup\mscr{H}(x_2|y_2)\right)\cu(\s_{\xi}\setminus\s_{\eta})\cap\mscr{H}(u|v),\]
which, applying $\wh{\nu}$, violates the minimality of $n$. 
\end{proof}

\begin{proof}[Proof of Proposition~\ref{cc's complete}]
Let $(\xi_n)_{n\geq 0}$ be a Cauchy sequence in a component of the Roller boundary. By Lemma~\ref{gate-projections are 1-Lip}, the sequence $\pi_I(\xi_n)$ is also Cauchy for every $I\in\mscr{J}$ and it has a limit $\xi_{I}\in I$. These points define a point ${\xi:=(\xi_I)_I\in\varprojlim I}$. 
By Proposition~\ref{metric via intervals},
\begin{align*}
d(\xi,\xi_n)=\sup_{I\in\mscr{J}} d\left(\xi_I,\pi_I(\xi_n)\right)&=\sup_{I\in\mscr{J}}\lim_{m\ra +\infty} d\left(\pi_I(\xi_m),\pi_I(\xi_n)\right) \\
&\leq\sup_{I\in\mscr{J}}\limsup_{m\ra +\infty} d\left(\xi_m,\xi_n\right)=\limsup_{m\ra +\infty} d\left(\xi_m,\xi_n\right)
\end{align*}
and the latter converges to zero as $n$ goes to infinity. 
\end{proof}

\begin{prop}\label{cc's compact intervals}
Components of $\overline X$ have compact intervals. 
\end{prop}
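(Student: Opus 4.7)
Let $\xi,\eta$ lie in a single component of $\overline X$, so that $d(\xi,\eta)<+\infty$. Since components are convex in $\overline X$ (Proposition~\ref{cc's are median and convex}), the $\overline X$-interval $I(\xi,\eta)$ agrees with the interval computed inside the component. As the equaliser of the continuous maps $\zeta\mapsto\zeta$ and $\zeta\mapsto m(\xi,\eta,\zeta)$, the set $I(\xi,\eta)$ is closed in $\overline X$ and hence compact in the topology of $\overline X$. My plan is to show that on $I(\xi,\eta)$ this topology coincides with the topology induced by the extended metric $d$; compactness in the metric will then follow at once.

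Fix $\eps>0$. The argument in the proof of Proposition~\ref{metric via intervals} (with $n$ reduced to $1$) produces points $x_0,y_0\in X$ such that, setting $I_0:=I(x_0,y_0)$, one has $\wh{\nu}((\s_{\xi}\setminus\s_{\eta})\cap\mscr{H}(x_0|y_0))\geq d(\xi,\eta)-\eps$. The $*$-symmetry of $\wh{\nu}$ gives the analogous estimate for $(\s_{\eta}\setminus\s_{\xi})\cap\mscr{H}(y_0|x_0)$, and since $\mscr{H}(I_0)=\mscr{H}(x_0|y_0)\sqcup\mscr{H}(y_0|x_0)$, summing yields
\[\wh{\nu}\bigl((\s_{\xi}\triangle\s_{\eta})\setminus\mscr{H}(I_0)\bigr)\leq 2\eps.\]
By Corollary~\ref{recognising gate-projections}, the inverse-limit projection $\pi_{I_0}\colon\overline X\to I_0$ is the gate-projection onto the gate-convex subset $I_0\subseteq\overline X$. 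Using part~1 of Lemma~\ref{gates vs inclusions} together with the characterisation $\mf{h}\in\s_{\zeta}\Leftrightarrow\zeta\in\wt{\mf{h}}$, one checks that for any $\zeta_1,\zeta_2\in\overline X$
\[(\s_{\zeta_1}\triangle\s_{\zeta_2})\cap\mscr{H}(I_0)=\s_{\pi_{I_0}(\zeta_1)}\triangle\s_{\pi_{I_0}(\zeta_2)}.\]
Combined with the inclusion $(\s_{\zeta_1}\triangle\s_{\zeta_2})\setminus\mscr{H}(I_0)\subseteq(\s_{\xi}\triangle\s_{\eta})\setminus\mscr{H}(I_0)$ available for $\zeta_1,\zeta_2\in I(\xi,\eta)$, this yields the key estimate
\[d(\zeta_1,\zeta_2)\leq d(\pi_{I_0}(\zeta_1),\pi_{I_0}(\zeta_2))+\eps.\]

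Since $I_0$ is compact in its intrinsic metric topology and the inclusion $X\hookrightarrow\overline X$ is continuous (Proposition~\ref{properties of X bar}), the metric topology on $I_0$ coincides with the subspace topology inherited from $\overline X$; in particular $\pi_{I_0}$ is continuous as a map to the metric space $I_0$. Any net $(\zeta_{\alpha})$ in $I(\xi,\eta)$ converging to some $\zeta\in I(\xi,\eta)$ in the topology of $\overline X$ therefore satisfies $d(\pi_{I_0}(\zeta_{\alpha}),\pi_{I_0}(\zeta))\to 0$, and the key estimate gives $\limsup_{\alpha} d(\zeta_{\alpha},\zeta)\leq\eps$. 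Letting $\eps\to 0$ shows $\zeta_{\alpha}\to\zeta$ in the median metric. Hence the identity map from $I(\xi,\eta)$ with the topology of $\overline X$ to $I(\xi,\eta)$ with its metric topology is continuous, and the continuous image of a compact space is compact.

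The main subtlety is reconciling the two topologies on $I(\xi,\eta)$: the subspace topology from $\overline X$ is compact but a priori strictly coarser than the metric topology. The argument depends crucially on the finiteness of $\wh{\nu}(\s_{\xi}\triangle\s_{\eta})$, which allows a single interval of $X$ to exhaust the relevant symmetric-difference mass up to arbitrarily small error, thereby upgrading $\overline X$-topological convergence to metric convergence on $I(\xi,\eta)$.
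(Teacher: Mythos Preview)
Your argument is correct and takes a genuinely different route from the paper's. Both proofs rest on the same key estimate: for any $\zeta_1,\zeta_2\in I(\xi,\eta)$ and any interval $I_0\subseteq X$, the inclusion $\s_{\zeta_1}\triangle\s_{\zeta_2}\subseteq\s_{\xi}\triangle\s_{\eta}$ yields $d(\zeta_1,\zeta_2)\leq d(\pi_{I_0}(\zeta_1),\pi_{I_0}(\zeta_2))+\tfrac{1}{2}\wh{\nu}\bigl((\s_{\xi}\triangle\s_{\eta})\setminus\mscr{H}(I_0)\bigr)$, and the residual term can be made arbitrarily small by Proposition~\ref{metric via intervals}. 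The paper then runs a diagonal argument over a countable family of such intervals to extract a Cauchy subsequence and invokes completeness of components (Proposition~\ref{cc's complete}). You instead exploit that $I(\xi,\eta)$ is already compact in the $\overline X$-topology and use the estimate to show the identity map to the metric topology is continuous. Your approach is slightly cleaner: it bypasses the diagonal extraction, does not rely on Proposition~\ref{cc's complete}, and incidentally proves the stronger statement that the subspace topology from $\overline X$ and the metric topology agree on $I(\xi,\eta)$. (A small simplification: the projection $\pi_{I_0}\colon\overline X\to I_0$ is continuous to the metric topology on $I_0$ directly, since it is a coordinate projection of the inverse limit; you need not pass through the comparison of topologies on $I_0$.)
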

\begin{proof}
Given points $\xi,\eta$ in the component $Z\cu\overline X$, let $\pi_n$ be the gate-projection to an interval $I_n\cu X$ with ${\wh{\nu}\left((\s_{\xi}\triangle\s_{\eta})\setminus\mscr{H}(I_n))\right)\leq\frac{1}{n}}$; these exist for every $n\geq 1$, by Proposition~\ref{metric via intervals}. Since $X$ has compact intervals, every sequence in $I(\xi,\eta)$ has a subsequence $(\zeta_k)_{k\geq 0}$ with the property that $(\pi_n(\zeta_k))_{k\geq 0}$ converges for every $n\geq 1$. For all $k,h,n\geq 1$, we have ${d(\pi_n(\zeta_k),\pi_n(\zeta_h))\geq d(\zeta_k,\zeta_h)-\frac{1}{n}}$; thus, $(\zeta_k)_{k\geq 0}$ is Cauchy and compactness of $I(\xi,\eta)$ follows from Proposition~\ref{cc's complete}.
\end{proof}

The following should better justify the terminology introduced in Definition~\ref{connected components}.

\begin{prop}\label{sometimes cc's are connected}
If $X$ is connected, each component $Z\cu\overline X$ is connected. 
\end{prop}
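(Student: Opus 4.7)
The plan is to adapt the midpoint-construction in the proof of Lemma~\ref{connected vs atoms} to the setting of $Z$. By Propositions~\ref{cc's complete} and~\ref{cc's compact intervals}, the component $Z$ is a complete median space with compact intervals, so (as in the proof of Lemma~\ref{connected vs atoms}) it suffices to show that any two points $\xi,\eta\in Z$ admit a midpoint $\zeta\in I(\xi,\eta)$ satisfying $d(\xi,\zeta)=d(\zeta,\eta)$; iterating produces geodesics, and connectedness then follows from Lemma~4.6 in~\cite{Bow4}.

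Fix $\xi,\eta\in Z$ and consider
\[\mscr{F}_{\xi}:=\{\zeta\in I(\xi,\eta)\mid d(\xi,\zeta)\leq d(\zeta,\eta)\},\]
ordered by declaring $\zeta_1\preceq\zeta_2$ iff $\zeta_2\in I(\zeta_1,\eta)$. Along any chain in $\mscr{F}_{\xi}$ the function $\zeta\mapsto d(\xi,\zeta)$ is monotone and bounded by $\tfrac{1}{2}d(\xi,\eta)$; this should let me argue that the chain is a Cauchy net. Its limit will exist in $Z$ by Proposition~\ref{cc's complete}, lie in $I(\xi,\eta)\subseteq Z$ (by convexity of $Z$, Proposition~\ref{cc's are median and convex}, and $1$-Lipschitz continuity of the median, Lemma~\ref{gate-projections are 1-Lip}), and remain in $\mscr{F}_{\xi}$ by monotone convergence of $d(\xi,\cdot)$. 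Zorn's Lemma then produces a maximal element $\bar z$; exchanging the roles of $\xi$ and $\eta$ yields a maximal $\bar w$ in the symmetric poset.

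If neither $\bar z$ nor $\bar w$ is a midpoint, then $d(\xi,\bar z)<\tfrac{1}{2}d(\xi,\eta)<d(\xi,\bar w)$, and the interval manipulations from Lemma~\ref{connected vs atoms} force $I(\bar z,\bar w)=\{\bar z,\bar w\}$. Proposition~\ref{walls in convex} will then exhibit a single halfspace $\wt{\mf h}\in\mscr{H}(\bar z|\bar w)$ of $\overline X$. Since any two distinct points of $\overline X$ are separated by the image of some halfspace of $X$ under the canonical injection $\mscr{H}(X)\hookrightarrow\mscr{H}(\overline X)$, this $\wt{\mf h}$ must come from some $\mf h\in\mscr{H}(X)$. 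The extended-metric formula then reads
\[0<d(\bar z,\bar w)=\tfrac{1}{2}\wh\nu(\s_{\bar z}\triangle\s_{\bar w})=\wh\nu(\{\mf h\}),\]
exhibiting $\mf h$ as an atom of $(\mscr{H}(X),\wh\nu)$ and contradicting Lemma~\ref{connected vs atoms} applied to the connected space $X$.

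The main technical point I anticipate is verifying that chains in $\mscr{F}_{\xi}$ give rise to genuinely convergent Cauchy nets in the metric on $Z$ (rather than only in the coarser product topology on $\overline X$) and that their limits stay within $I(\xi,\eta)\cap\mscr{F}_{\xi}$; this is where completeness of $Z$, compactness of its intervals, and the Lipschitz properties of the median on $\overline X$ are all needed simultaneously. Once these routine but delicate verifications are in hand, the atom/no-atom dichotomy of Lemma~\ref{connected vs atoms} reduces the connectedness of $Z$ to that of $X$.
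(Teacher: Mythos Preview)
Your argument is correct and reaches the same crucial configuration as the paper --- a two-point interval $\{\bar z,\bar w\}$ in $\overline X$ --- but the route on both ends differs.  First, you rerun the Zorn/midpoint construction from Lemma~\ref{connected vs atoms} inside $Z$; the paper instead cites that lemma as a black box (``$Z$ complete $\Rightarrow$ it suffices to rule out atoms in $\mscr{H}(Z)$'') and obtains the two-point interval directly as a pair of gates for an assumed atomic wall.  Second, and more interestingly, your endgame is measure-theoretic while the paper's is topological: you use Proposition~\ref{walls in convex} to pin down the \emph{unique} wall of $\overline X$ separating $\bar z,\bar w$, identify it with some $\wt{\mf h}$ for $\mf h\in\mscr{H}(X)$, and read off $\wh\nu(\{\mf h\})=d(\bar z,\bar w)>0$, contradicting Lemma~\ref{connected vs atoms} for $X$.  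The paper instead takes the $1$-Lipschitz gate-projection $\pi\colon\overline X\to\{\xi,\eta\}$, notes that connectedness of $X$ forces $\pi(X)$ to be a single point, and then observes that any $\mf h\in\mscr{H}(X)$ with $\wt{\mf h}\in\mscr{H}(\xi|\eta)$ sends $\mf h$ and $\mf h^*$ to different points.  Your version is a touch longer but has the virtue of making the link ``atom in $Z$ $\Rightarrow$ atom in $X$'' explicit; the paper's version avoids the uniqueness step and the measure computation entirely.  The technical verifications you flag (Cauchy nets in $Z$, limits staying in $I(\xi,\eta)\cap\mscr F_\xi$) are indeed routine given Propositions~\ref{cc's are median and convex}, \ref{cc's complete} and Lemma~\ref{gate-projections are 1-Lip}.
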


Note however that $Z$ is not a connected component of $\overline X$ as the latter is connected, being the closure of $X$.

\begin{proof}
By Lemma~\ref{connected vs atoms} and Proposition~\ref{cc's complete}, it suffices to prove that no halfspace of $Z$ is an atom. Suppose for the sake of contradiction that there exists $\mf{k}\in\mscr{H}(Z)$ with $d(\mf{k},\mf{k}^*)>0$; let $(\xi,\eta)$ be a pair of gates for $(\mf{k},\mf{k}^*)$, as provided by Lemma~\ref{gates for pairs of convex sets 1}. Since $Z$ is convex in $\overline X$, the interval between $\xi$ and $\eta$ in $\overline X$ consists of the sole points $\xi$ and $\eta$. Let $\pi\colon\overline X\ra I(\xi,\eta)=\{\xi,\eta\}$ be the corresponding gate-projection; since $\pi$ is $1$-Lipschitz and $X$ is connected, we must have either $\pi(X)=\{\xi\}$ or $\pi(X)=\{\eta\}$. However, since $\xi\neq\eta$, there exists $\mf{h}\in\mscr{H}$ such that $\wt{\mf{h}}\in\mscr{H}(\xi|\eta)$; hence $\pi(\mf{h}^*)=\{\xi\}$ and $\pi(\mf{h})=\{\eta\}$, a contradiction.
\end{proof}

Observe that, if $Z$ is a component of the Roller boundary and $\mf{h}\in\mscr{H}$ is such that $\wt{\mf{h}}\cap Z$ and $\wt{\mf{h}}^*\cap Z$ are both nonempty, then they are halfspaces for the median-space structure of $Z$. The corresponding walls of $Z$ are enough to separate points in $Z$. However, not all walls of the median space $Z$ arise this way, essentially due to the fact that not every wall of the median algebra $\overline X$ arises from a wall of $X$. Still, \emph{almost every} wall of $Z$ comes from the above construction; see Proposition~\ref{pi_Z 2} below.

\begin{lem}\label{s_Z measurable}
Let $Z$ be a component of the Roller boundary. The sets $\s_Z:=\{\mf{h}\in\mscr{H}\mid Z\cu\wt{\mf{h}}\}$ and $\mscr{H}_Z:=\mscr{H}\setminus(\s_Z\sqcup\s_Z^*)$ are morally measurable.
\end{lem}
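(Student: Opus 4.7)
The plan is to localize the condition $Z \cu \wt{\mf{h}}$ on each halfspace-interval, identifying it with an inclusion $K \cu \mf{h}$ for a convex subset $K \cu X$, and then invoke Lemma~\ref{adjacency}.

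Fix $x, y \in X$ and set $I := I(x,y)$. By Corollary~\ref{recognising gate-projections}, the map $\pi_I \colon \overline{X} \to I$ is the gate-projection onto $I$, so in particular $I$ is gate-convex in $\overline{X}$. Since $Z$ is convex in $\overline{X}$ by Proposition~\ref{cc's are median and convex}, part~1 of Lemma~\ref{gates vs inclusions} shows that $K := \pi_I(Z)$ is convex in $I$, hence in $X$.

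For $\mf{h} \in \mscr{H}(x|y) \cu \mscr{H}(I)$, Proposition~\ref{walls in convex} applied to the gate-convex subset $I \cu \overline{X}$ yields $\wt{\mf{h}} = \pi_I^{-1}(\mf{h} \cap I)$; hence $Z \cu \wt{\mf{h}}$ if and only if $K \cu \mf{h}$. This gives
\[\s_Z \cap \mscr{H}(x|y) = \s_K \cap \mscr{H}(x|y),\]
and by Lemma~\ref{adjacency} (applied to the convex set $K$) the right-hand side lies in $\mscr{B}_0$. By the arbitrariness of $x, y$, we conclude $\s_Z \in \wh{\mscr{B}}$. The symmetric identity $\s_Z^* \cap \mscr{H}(x|y) = \s_K^* \cap \mscr{H}(x|y) \in \mscr{B}_0$ yields $\s_Z^* \in \wh{\mscr{B}}$, and therefore $\mscr{H}_Z = \mscr{H} \setminus (\s_Z \sqcup \s_Z^*) \in \wh{\mscr{B}}$.

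The only delicate step is reformulating the global inclusion $Z \cu \wt{\mf{h}}$ in terms of a subset of $X$: since $Z$ need not be closed in $\overline{X}$ and may fail to be separable, a direct countable-intersection argument ranging over points of $Z$ is not obviously available. Projecting $Z$ to the interval $I \cu X$ and invoking Proposition~\ref{walls in convex} bypasses this difficulty and reduces the moral measurability of $\s_Z$ to that of $\s_K$ for a convex subset $K$ of $X$, which is precisely the content of Lemma~\ref{adjacency}.
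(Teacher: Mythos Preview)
Your proof is correct and follows essentially the same approach as the paper's: both arguments reduce to the identity $\s_Z\cap\mscr{H}(I)=\s_{\pi_I(Z)}\cap\mscr{H}(I)$ for each interval $I\cu X$ (with $\pi_I(Z)$ convex in $X$) and then invoke Lemma~\ref{adjacency}. You are simply more explicit in justifying this identity via Proposition~\ref{walls in convex}, whereas the paper cites part~1 of Lemma~\ref{gates vs inclusions} directly.
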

\begin{proof}
By part~1 of Lemma~\ref{gates vs inclusions}, we have $\s_Z\cap\mscr{H}(I)=\s_{\pi_I(Z)}\cap\mscr{H}(I)$ for every interval $I\cu X$ and $\pi_I(Z)$ is convex. The statement now follows from Lemma~\ref{adjacency}.
\end{proof}

Lemma~\ref{s_Z measurable} allows us to define gate-projections to boundary components. Namely, if $Z$ is a component of the Roller boundary, we have a morally measurable decomposition $\mscr{H}=\mscr{H}_Z\sqcup\s_Z\sqcup\s_Z^*$ and we can consider the map $\text{res}_Z\colon 2^{\mscr{H}}\ra 2^{\mscr{H}}$ that takes $E\cu\mscr{H}$ to $(E\cap\mscr{H}_Z)\sqcup\s_Z$. Using Lemma~\ref{s_Z measurable}, it is immediate to observe that $\text{res}_Z$ sends morally measurable ultrafilters to morally measurable ultrafilters and hence induces a map $\pi_Z\colon \overline X\ra \overline X$.

\begin{prop}\label{pi_Z 1}
The map $\pi_Z$ is the gate-projection to the closure of $Z$ in $\overline X$; this is canonically identified with the Roller compactification of $Z$ and will be denoted unambiguously by $\overline Z\cu\overline X$.
\end{prop}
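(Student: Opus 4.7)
The plan is to prove the proposition in three stages: (1) verify that $\pi_Z$ is a gate-projection onto its image; (2) identify this image with the topological closure $\overline Z$ of $Z$ in $\overline X$; and (3) identify $\overline Z$ with the Roller compactification of the median space $Z$.

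For (1), I would work under the identification $\overline X\simeq\overline{\mscr{M}}(X)$, under which the median on $\overline X$ is majority-vote on ultrafilters and $\pi_Z$ acts as $\text{res}_Z\colon\s\mapsto(\s\cap\mscr{H}_Z)\sqcup\s_Z$. By Proposition~\ref{retractions new} it suffices to verify that $\pi_Z(m(\xi,\eta,\zeta))=m(\pi_Z(\xi),\pi_Z(\eta),\zeta)$ for all $\xi,\eta,\zeta\in\overline X$; this reduces to a case analysis on halfspaces: for $\mf{h}\in\mscr{H}_Z$ both sides agree with the majority vote on $\s_\xi,\s_\eta,\s_\zeta$; for $\mf{h}\in\s_Z$ both sides contain $\mf{h}$ (on the right because $\mf{h}\in\s_{\pi_Z(\xi)}\cap\s_{\pi_Z(\eta)}$); and symmetrically for $\mf{h}\in\s_Z^*$. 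Consequently $\text{im}(\pi_Z)$ is gate-convex in $\overline X$, hence closed by Lemma~\ref{compact median algebras}.

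For (2), every $\zeta\in Z$ satisfies $\s_Z\cu\s_\zeta$ and $\s_\zeta\cap\s_Z^*=\emptyset$, so $\text{res}_Z(\s_\zeta)=\s_\zeta$; closedness of $\text{im}(\pi_Z)$ then yields $\overline Z\cu\text{im}(\pi_Z)$. For the reverse inclusion, given $\xi\in\text{im}(\pi_Z)$ and a basic neighborhood $U=\bigcap_{k=1}^n\pi_{I_k}^{-1}(V_k)$ with each $V_k$ a convex open neighborhood of $\pi_{I_k}(\xi)$ in $I_k$ (using local convexity of each $I_k$), I would inductively construct $\zeta\in Z\cap U$: starting from any $\eta\in Z$ satisfying the constraints for indices $<k$, choose $\eta'\in Z$ with $\pi_{I_k}(\eta')\in V_k$ and replace $\eta$ by the median $m(\eta,\eta',\xi)$. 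This new point lies in $Z$ by convexity of $Z$ in $\overline X$ (Proposition~\ref{cc's are median and convex}), and by Proposition~\ref{retractions new} its projection to each $I_j$ is $m(\pi_{I_j}(\eta),\pi_{I_j}(\eta'),\pi_{I_j}(\xi))$, which stays in the convex set $V_j$ because two of its three arguments do (namely $\pi_{I_j}(\xi)$ and either $\pi_{I_j}(\eta)$ for $j<k$ or $\pi_{I_j}(\eta')$ for $j=k$). The main obstacle here is the existence of $\eta'\in Z$ with $\pi_{I_k}(\eta')\in V_k$, equivalently the inclusion $\pi_I(\text{im}(\pi_Z))\cu\overline{\pi_I(Z)}$ for every interval $I\cu X$; for $\xi\in\text{im}(\pi_Z)$, the point $\pi_I(\xi)$ is represented on $\mscr{H}(I)$ by the ultrafilter $(\s_\xi\cap\mscr{H}_Z\cap\mscr{H}(I))\sqcup(\s_Z\cap\mscr{H}(I))$, which I would approximate by analogous ultrafilters arising from projections of points in $Z$, using the density of $X$ in $\overline X$ (Lemma~\ref{M dense}) to realize any prescribed configuration on $\mscr{H}_Z\cap\mscr{H}(I)$.

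Finally, for (3), once $\text{im}(\pi_Z)=\overline Z$, every interval $J=I(\zeta_1,\zeta_2)\cu Z$ with $\zeta_1,\zeta_2\in Z$ is also an interval of $\overline X$ by convexity of $Z$, and it is compact by Proposition~\ref{cc's compact intervals}, hence gate-convex. The resulting system of restricted gate-projections $\overline Z\to J$ assembles via the universal property of the inverse limit into a continuous morphism of median algebras $\overline Z\to\varprojlim_{J\in\mscr{J}(Z)}J$, which by definition is the Roller compactification of $Z$. This morphism is bijective since $Z$ sits densely in both spaces and any two distinct points in either are separated by an interval $J\cu Z$.
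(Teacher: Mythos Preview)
Your steps (1), (3), and the inclusion $\overline Z\cu\text{im}(\pi_Z)$ in (2) are fine and match the paper's argument. The gap is in the reverse inclusion $\text{im}(\pi_Z)\cu\overline Z$.

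You correctly isolate the obstacle: for $\xi\in\text{im}(\pi_Z)$ and $I\in\mscr{J}(X)$, you need $\pi_I(\xi)\in\overline{\pi_I(Z)}$. But your proposed justification---``using the density of $X$ in $\overline X$ to realize any prescribed configuration on $\mscr{H}_Z\cap\mscr{H}(I)$''---does not work. Density of $X$ in $\overline X$ produces points of $X$, not of $Z$; and you cannot simply push such points into $Z$ via $\pi_Z$, because at this stage you do not know that $\pi_Z(X)\cu Z$ (indeed this is false in infinite rank, and in finite rank it is the content of the \emph{next} proposition). So as written, the key existence step $\eta'\in Z$ with $\pi_{I_k}(\eta')\in V_k$ is unjustified. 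Your approach could be salvaged by a direct measure-theoretic argument: since $\s_Z\setminus\s_\xi$ is $\wh{\nu}$-null and $\s_Z\cap\mscr{H}(I)=\s_{\pi_I(Z)}\cap\mscr{H}(I)$, the set $\mscr{H}(\pi_I(\xi)\mid\pi_I(Z))$ is null, whence the gate of $\pi_I(\xi)$ to $\overline{\pi_I(Z)}$ is $\pi_I(\xi)$ itself. But that is not the argument you gave.

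The paper bypasses the whole approximation scheme with a short contradiction. Since $\overline Z$ is closed in the compact $\overline X$, it is gate-convex; if $\xi\in\text{im}(\pi_Z)\setminus\overline Z$, let $\eta$ be its gate to $\overline Z$. Then $\xi\neq\eta$, so some interval $I\cu X$ has $\pi_I(\xi)\neq\pi_I(\eta)$, and any $\mf{h}\in\mscr{H}(\pi_I(\xi)\mid\pi_I(\eta))$ satisfies $\wt{\mf{h}}\in\mscr{H}(\xi\mid\eta)=\mscr{H}(\xi\mid\overline Z)$, hence $\mf{h}\in\s_Z\setminus\s_\xi$. This forces $\wh{\nu}(\s_Z\setminus\s_\xi)>0$, contradicting $\xi\in\text{im}(\pi_Z)$. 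This is both shorter and avoids the circularity issue entirely.
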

\begin{proof}
By Proposition~\ref{retractions new}, the map $\pi_Z$ is a gate-projection to some gate-convex set $C\cu\overline X$. To avoid confusion, we denote by $W$ the closure of $Z$ in $\overline X$. Given $\xi\in\overline X$, we have $\pi_Z(\xi)=\xi$ if and only if $\wh{\nu}(\s_Z\setminus\s_{\xi})=0$; hence, $\pi_Z$ is the identity on $Z$ and, by part~1 of Lemma~\ref{compact median algebras}, it follows that $W\cu C$.

Suppose for the sake of contradiction that there exists $\xi\in C\setminus W$. By Lemma~\ref{gate-convexity vs compact intervals}, $W$ is gate-convex, so there exists a gate $\eta$ for $(\xi,W)$. Since $\xi\neq\eta$, we have $\pi_I(\xi)\neq\pi_I(\eta)$ for some interval $I\cu X$; every $\mf{h}\in\mscr{H}$ such that $\mf{h}\in\mscr{H}(\pi_I(\xi)|\pi_I(\eta))$ satisfies $\wt{\mf{h}}\in\mscr{H}(\xi|\eta)=\mscr{H}(\xi|W)$, hence $\mf{h}\in\s_Z\setminus\s_{\xi}$. This implies that $\wh{\nu}(\s_Z\setminus\s_{\xi})>0$, contradicting the fact that $\xi\in C$. 

We are left to identify $W$ with the Roller compactification $\overline Z$. We have a continuous morphism $f\colon\overline X\ra\overline Z$ mapping each $\xi\in\overline X$ to $(\pi_J(\xi))_{J\in\mscr{J}(Z)}$. Since $f(Z)=Z$, Lemma~\ref{M dense} and Proposition~\ref{cc's compact intervals} imply that $f$ is surjective. Moreover, $f(\xi)=f(\eta)$ if and only if no $\mf{h}\in\mscr{H}_Z$ satisfies $\wt{\mf{h}}\in\mscr{H}(\xi|\eta)$. If $\xi,\eta$ are distinct and lie in $W$, we have $0<d(\xi,\eta)=\wh{\nu}\left((\s_{\xi}\setminus\s_{\eta})\cap\mscr{H}_Z\right)$. Thus, the restriction of $f$ to $W$ is an isomorphism.
\end{proof}

In the rest of the section we will have to assume in addition that $X$ has finite rank; the necessity of this will be discussed below.

\begin{prop}\label{pi_Z 2}
Suppose $X$ is a complete, finite rank median space and let $Z$ be a component of $\partial X$. Then:
\begin{enumerate}
\item we have $\pi_Z(X)\cu Z$;
\item every thick halfspace of $Z$ is of the form $\wt{\mf{h}}\cap Z$ for a unique $\mf{h}\in\mscr{H}$;
\item we have $\text{rank}(Z)\leq\text{rank}(X)-1$.
\end{enumerate}
\end{prop}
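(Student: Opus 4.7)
The plan is to prove (1), (2), and (3) in sequence, with each part leaning on the previous ones.

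For (1), fix $x \in X$ and $\zeta_0 \in Z$; the goal is to show $d(\pi_Z(x), \zeta_0) < +\infty$, which places $\pi_Z(x)$ in the component $Z$. The identity $\s_{\pi_Z(x)} = (\s_x \cap \mscr{H}_Z) \sqcup \s_Z$ implicit in Proposition~\ref{pi_Z 1} gives
$d(\pi_Z(x), \zeta_0) = \tfrac{1}{2}\wh{\nu}\bigl((\s_x \triangle \s_{\zeta_0}) \cap \mscr{H}_Z\bigr).$
I apply Lemma~\ref{Dilworth for differences} to split $(\s_x \triangle \s_{\zeta_0}) \cap \mscr{H}_Z$ into at most $2r$ chains and bound each separately, reducing to countable sub-chains via Lemma~\ref{countable cofinal subsets}. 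For a chain $\mc{C} = \{\mf{h}_n\}_{n \geq 1}$ with $\zeta_0 \in \mf{h}_n$ and $x \in \mf{h}_n^*$ for every $n$, Lemma~\ref{gates vs inclusions}(1) applied to $\wt{\mf{h}}_n^*$ and $\overline{Z}$ (gate-convex by Proposition~\ref{pi_Z 1}) forces $\pi_Z(x) \in \wt{\mf{h}}_n^* \cap \overline{Z}$ for every $n$. Mimicking the iterative construction in the proof of Lemma~\ref{tangible lemma}, I then build a Cauchy sequence $\eta_n \in Z$: starting from $\eta_0 = \zeta_0$, I project $\eta_{n-1}$ at each step to the closure of $\mf{h}_n^* \cap Z$ inside $Z$, gate-convex thanks to Propositions~\ref{cc's complete} and~\ref{cc's compact intervals} together with Lemma~\ref{gate-convexity vs compact intervals}. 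The cumulative displacement $\sum_n d(\eta_{n-1}, \eta_n)$ controls $\tfrac{1}{2}\wh{\nu}(\mc{C})$ from above, and completeness of $Z$ (Proposition~\ref{cc's complete}) gives a limit $\eta_\infty \in Z$ with $d(\zeta_0, \eta_\infty) < +\infty$, so $\wh{\nu}(\mc{C}) < +\infty$.

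For (2), given a thick halfspace $\mf{k} \in \mscr{H}(Z)$, I lift it uniquely to $\wt{\mf{k}} \in \mscr{H}(\overline{Z})$ via Lemma~6.5 in \cite{Bow1}, and then to $\pi_{\overline{Z}}^{-1}(\wt{\mf{k}}) \in \mscr{H}(\overline{X})$ by Proposition~\ref{walls in convex} applied to the gate-convex inclusion $\overline{Z} \subseteq \overline{X}$. Setting $\mf{h} := \pi_{\overline{Z}}^{-1}(\wt{\mf{k}}) \cap X$, it remains to check that both $\mf{h}$ and $\mf{h}^*$ are nonempty. Part~(1) gives $\pi_Z(X) \subseteq Z$; combined with density of $X$ in $\overline{X}$ and continuity of $\pi_Z$, this makes $\pi_Z(X)$ dense in $Z$. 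Thickness of $\mf{k}$ furnishes a nonempty open subset of $Z$ inside $\mf{k}$, which is met by $\pi_Z(X)$, producing some $x \in X$ with $\pi_Z(x) \in \mf{k}$, hence $x \in \mf{h}$; the same argument applies to $\mf{k}^*$. Uniqueness of $\mf{h}$ follows from Proposition~\ref{walls in convex} and uniqueness of the extension to $\mscr{H}(\overline{Z})$.

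For (3), I argue by contradiction. Suppose $\operatorname{rank}(Z) \geq r := \operatorname{rank}(X)$. Pick $r$ pairwise transverse thick halfspaces $\mf{k}_1, \ldots, \mf{k}_r$ of $Z$; by~(2), each lifts to a unique $\mf{h}_i \in \mscr{H}(X)$ with $\wt{\mf{h}}_i \cap Z = \mf{k}_i$, and the $\wt{\mf{h}}_i$ are pairwise transverse in $\overline{X}$ by Helly's Theorem (transversality in $Z$ is preserved through the gate-projection lift). I now produce an $(r+1)$-st transverse halfspace $\mf{j}$. Fix $x \in X$ and, for each $i$, points $x_i^{\pm} \in \mf{h}_i^{\pm}$ (available since $\mf{h}_i \in \mscr{H}(X)$); for each sign pattern $\epsilon \in \{+,-\}^r$, pick $\zeta_\epsilon \in Z \cap \bigcap_i \wt{\mf{h}}_i^{\epsilon_i}$ by Helly. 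The convex hulls $F := \operatorname{hull}(\{x\} \cup \{x_i^{\pm}\}) \subseteq X$ and $E := \operatorname{hull}(\{\zeta_\epsilon\}) \subseteq Z$ are compact (Lemma~\ref{compact median algebras} applied iteratively to finite unions), and they are disjoint since $X$ and $Z$ are distinct components of $\overline{X}$. Separation (Theorem~2.7 in \cite{Roller}) yields $\mf{j} \in \mscr{H}(\overline{X})$ with $F \subseteq \wt{\mf{j}}^*$ and $E \subseteq \wt{\mf{j}}$. A direct check confirms $\mf{j}$ is transverse to each $\wt{\mf{h}}_i$: the $\wt{\mf{j}}^*$-sides are witnessed by $x_i^{\pm}$, and the $\wt{\mf{j}}$-sides by appropriate $\zeta_\epsilon$'s. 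Thus $\mf{h}_1, \ldots, \mf{h}_r, \mf{j}$ give $r+1$ pairwise transverse halfspaces in $\mscr{H}(\overline{X})$, contradicting $\operatorname{rank}(\overline{X}) = r$ (Lemma~\ref{rk M bar}).

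The main obstacle is part~(1): bounding $\wh{\nu}(\mc{C})$ for a chain in $\mscr{H}_Z$ requires a genuine Cauchy sequence \emph{inside} $Z$, not merely a convergent sequence in $\overline{Z}$. The delicate point is that the iterative gate-projections could in principle drift off to $\overline{Z} \setminus Z$; exploiting completeness of $Z$ and finite rank to rule this out is the crux of the argument.
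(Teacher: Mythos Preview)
Your argument for part~(1) has a genuine gap. You reduce to showing that each chain $\mc{C}\cu(\s_{\zeta_0}\setminus\s_x)\cap\mscr{H}_Z$ has finite $\wh{\nu}$-measure, and you propose to do this by building a sequence $(\eta_n)$ in $Z$ via iterated gate-projections, then invoking completeness of $Z$ to obtain a limit. But completeness only helps if $(\eta_n)$ is Cauchy, and you give no reason why $\sum_n d(\eta_{n-1},\eta_n)$ should be finite; indeed, this is precisely what you are trying to prove. Your closing paragraph flags this as ``the crux'' without supplying the argument. The analogy with Lemma~\ref{tangible lemma} is misleading: there, finiteness of $\wh{\nu}(\s\setminus\s_{x_0})$ is the \emph{hypothesis} that forces the transfinite process to terminate; here it is the \emph{conclusion} you seek.

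The fix is much simpler than an iterative construction and is, in effect, the paper's argument restricted to a single chain. Pick any $\mf{h}_0\in\mc{C}$. Every $\mf{h}\in\mc{C}$ satisfies either $\mf{h}_0\cu\mf{h}$, whence $\mf{h}\in\mscr{H}(x|\mf{h}_0)$ and this part has measure at most $d(x,\mf{h}_0)<+\infty$; or $\mf{h}\cu\mf{h}_0$, whence $\mf{h}\in\mscr{H}(\mf{h}_0^*|\zeta_0)$, and since $\mf{h}_0\in\mscr{H}_Z$ there exists $\eta\in\wt{\mf{h}}_0^*\cap Z$, so this part has measure at most $d(\zeta_0,\eta)<+\infty$. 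The paper does exactly this, but globally: it chooses a maximal antichain $\{\mf{h}_1,\ldots,\mf{h}_k\}$ in $(\s_\xi\setminus\s_x)\cap\mscr{H}_Z$ (of size $k\leq r$), and the same dichotomy bounds the whole set by $\sum_i d(x,\mf{h}_i)+\sum_i d(\xi,\wt{\mf{h}}_i^*\cap Z)$ in one line, bypassing Dilworth and cofinal subsets entirely.

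Your arguments for parts~(2) and~(3) are correct and take routes genuinely different from the paper's. For~(2), the paper pulls $\mf{k}$ back directly along $\pi_Z|_X\colon X\ra Z$ (using part~(1)) and uses a gate argument in $Z$ to show the preimage is nontrivial; you instead pass through $\overline Z$ and use density of $\pi_Z(X)$ in $Z$ against thickness. Both are short. For~(3), the paper argues measure-theoretically: since $(\s_\xi\setminus\s_x)\cap\s_Z$ has infinite measure while the halfspaces comparable with some $\mf{h}_i$ account for only finite measure, a transverse $\mf{h}$ must exist. Your approach---separating compact convex hulls $F\cu X$ and $E\cu Z$ in $\overline X$ by a wall $\mf{j}$, then checking transversality against the $\wt{\mf{h}}_i$ via the witnesses $x_i^{\pm}$ and $\zeta_{\epsilon}$---is more elementary and avoids measure theory altogether; it is arguably cleaner. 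One small point: to obtain $r$ pairwise-transverse \emph{thick} halfspaces of $Z$, you need that thick halfspaces separate points of $Z$; this follows from Corollary~\ref{nowhere-dense halfspaces} applied to $Z$ (which is finite rank by Lemma~\ref{rk M bar}, hence locally convex), but you should say so.
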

\begin{proof}
Given $x\in X$ and $\xi\in Z$, let $\{\mf{h}_1,...,\mf{h}_k\}$ be a maximal set of pairwise-transverse halfspaces in $(\s_{\xi}\setminus\s_x)\cap\mscr{H}_Z$. We have:
\begin{align*}
d(\xi,\pi_Z(x))&=\wh{\nu}\left((\s_{\xi}\setminus\s_x)\cap\mscr{H}_Z\right) \\
&\leq\sum_{i=1}^k\wh{\nu}\left(\mscr{H}(x|\mf{h}_i)\right) + \sum_{i=1}^k\wh{\nu}\left(\mscr{H}(\mf{h}_i^*|\xi)\right) \\
&\leq\sum_{i=1}^kd(x,\mf{h}_i)+\sum_{i=1}^kd(\xi,\wt{\mf{h}}_i^*\cap Z)<+\infty.
\end{align*}
We now prove part~2. Given the partition $Z=\mf{k}\sqcup\mf{k}^*$ associated to a halfspace of $Z$, we obtain a partition of $X$ into the convex subsets $\pi_Z^{-1}(\mf{k})$ and $\pi_Z^{-1}(\mf{k}^*)$. These are halfspaces of $X$, unless $\pi_Z(X)\cu\mf{k}$ or $\pi_Z(X)\cu\mf{k}^*$. We show that this cannot happen if $\mf{k}$ is thick. Pick a point $\xi\in\mf{k}^*$ with $d(\xi,\mf{k})>0$; let $\eta$ be the gate for $(\xi,\overline{\mf{k}})$. Since $\xi\neq\eta$, there exists a halfspace $\mf{h}\in\mscr{H}$ with $\wt{\mf{h}}\in\mscr{H}(\xi|\eta)\cu\mscr{H}(\xi|\mf{k})$. Thus, $\pi_Z(x)\in\mf{k}^*$ for every $x\in\mf{h}^*$; in particular, $\pi_Z(X)\not\cu\mf{k}$. A symmetric argument shows that $\pi_Z(X)\not\cu\mf{k}^*$.

Finally, we prove part~3. Suppose for the sake of contradiction that $\text{rank}(Z)\geq r=\text{rank}(X)$. We have already observed that the halfspaces $\wt{\mf{h}}\cap Z$ with $\mf{h}\in\mscr{H}_Z$ are enough to separate points of $Z$. Lemma~\ref{rank with a subset} then shows that there exist $\mf{h}_1,...,\mf{h}_r\in\mscr{H}$ such that $\mf{k}_i:=\wt{\mf{h}_i}\cap Z\in\mscr{H}(Z)$ are pairwise transverse; in particular, $\mf{h}_1,...,\mf{h}_r$ must be pairwise transverse. Pick $x\in\mf{h}_1^*\cap...\cap\mf{h}_r^*$ and $\xi\in\mf{k}_1\cap...\cap\mf{k}_r$. Given two halfspaces $\mf{h},\mf{k}\in\s_{\xi}\setminus\s_x\cu\mscr{H}$, either $\mf{h}\cu\mf{k}$ or $\mf{k}\cu\mf{h}$ or they are transverse; observe that $\{\mf{h}_1,...,\mf{h}_r\}\cu\s_{\xi}\setminus\s_x$. If $\mf{h}\in(\s_{\xi}\setminus\s_x)\cap\s_Z$, we cannot have $\mf{h}\cu\mf{h}_i$ for any $i$ since $\s_Z$ is a filter and $\mf{h}_i\in\mscr{H}_Z$. Moreover, since $(\s_{\xi}\setminus\s_x)\cap\mscr{H}_Z$ has finite measure by part~1, the set $(\s_{\xi}\setminus\s_x)\cap\s_Z$ has infinite measure. Finally, the halfspaces $\mf{h}\in(\s_{\xi}\setminus\s_x)\cap\s_Z$ such that $\mf{h}_i\cu\mf{h}$ for some $i$ form a subset of finite measure, bounded above by the sum of the distances from $x$ to each $\mf{h}_i$; we conclude that there exists $\mf{h}\in\s_{\xi}\setminus\s_x$ that is transverse to $\mf{h}_1,...,\mf{h}_r$, a contradiction.
\end{proof}

Part~1 of Proposition~\ref{pi_Z 2} can fail without the finite rank assumption. Let $X$ be the $0$-skeleton of the ${\rm CAT}(0)$ cube complex whose vertex set is the restricted product $\{0,1\}^{(\N)}$ and whose edges join sequences with exactly one differing coordinate. Hyperplanes are in one-to-one correspondence with natural numbers and the Roller compactification $\overline X$ can be identified with the unrestricted product $\{0,1\}^{\N}$. Let $\xi\in\overline X$ be the point whose coordinates are all $1$; its component $Z$ consists of sequences with only finitely many zeroes. It is immediate to observe that $\overline Z=\overline X$; in particular, $\pi_Z$ is the identity on all of $\overline X$.

\begin{figure}
\centering
\includegraphics[width=3in]{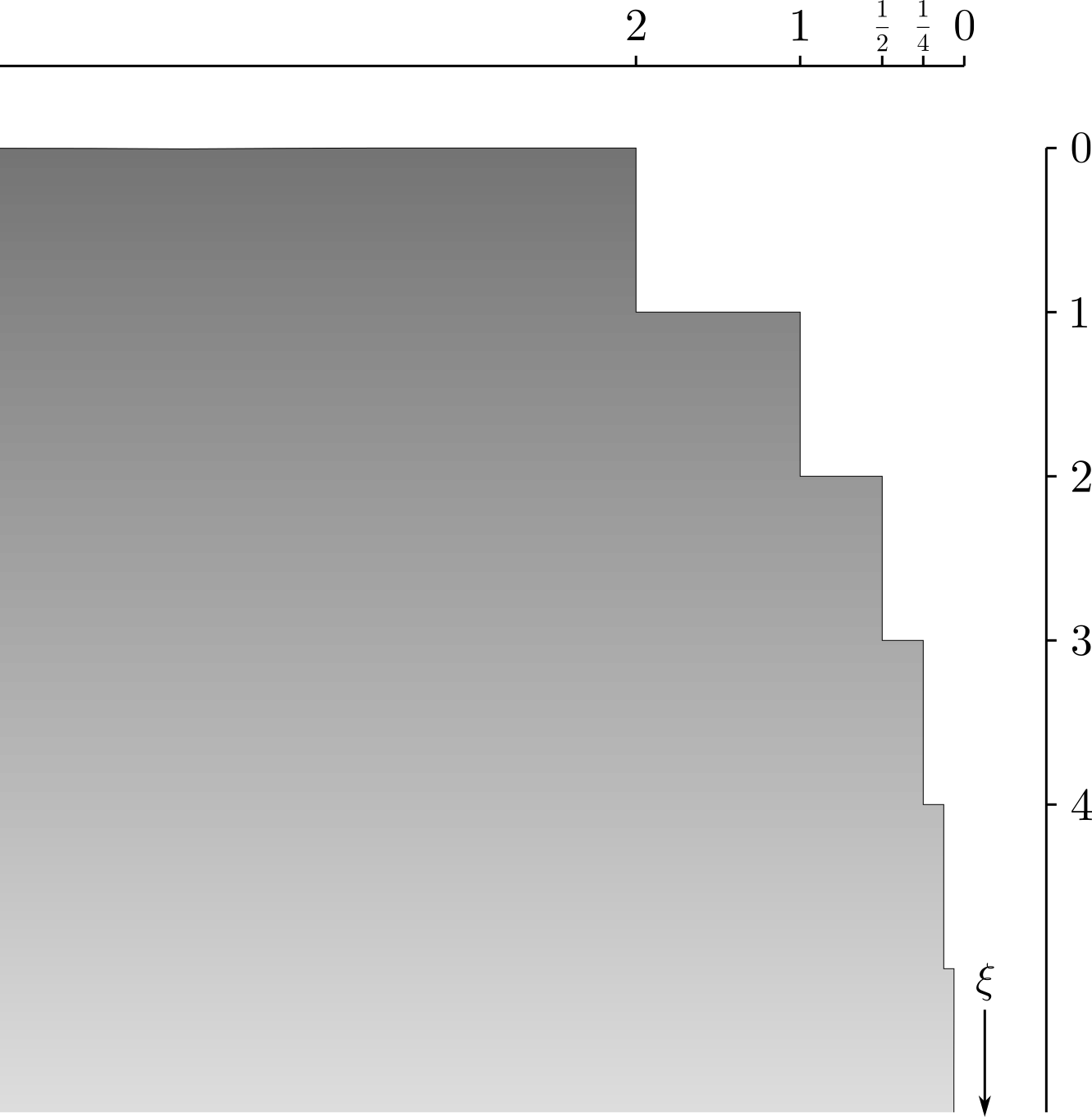}
\caption{}
\label{quadratic staircase}
\end{figure}

In general, even in finite rank, non-thick halfspaces of a component ${Z\cu\partial X}$ need not be of the form $\wt{\mf{h}}$ for some $\mf{h}\in\mscr{H}$. Consider the median space $X$ in Figure~\ref{quadratic staircase}. It is an infinite descending staircase with steps of constant height and exponentially-decreasing width; we consider $X$ as a subset of $\R^2$ with the restriction of the $\ell^1$ metric. It is a complete median space of rank two. Let $\xi\in\overline X$ be the point ``at the bottom'' of the staircase $X$ and let $Z\cu\overline X$ be its component of the Roller boundary. It is easy to notice that $\{\xi\}$ is a halfspace of $Z$, while, for every $\mf{h}\in\mscr{H}$, the set $\wt{\mf{h}}\cap Z$ either does not contain $\xi$ or contains a neighbourhood of $\xi$ in $Z$.

\begin{prop}\label{hamster}
Let $X$ be a complete finite rank median space with distinct components $Z_1,Z_2\cu\partial X$ satisfying ${\text{rank}(Z_1)=\text{rank}(Z_2)=k}$. There exists a component $W\cu\overline X$ such that $\text{rank}(W)\geq k+1$ and $W\cap I(\eta_1,\eta_2)\neq\emptyset$ for every $\eta_1\in Z_1$ and $\eta_2\in Z_2$.
\end{prop}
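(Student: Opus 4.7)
Fix $x\in X$, $\xi_1\in Z_1$, $\xi_2\in Z_2$ and set $w_0:=m(x,\xi_1,\xi_2)\in\overline X$; let $W\cu\overline X$ denote its component. For any $\eta_1\in Z_1$ and $\eta_2\in Z_2$, the point $w:=m(x,\eta_1,\eta_2)$ is the gate-projection of $x$ onto the interval $I(\eta_1,\eta_2)$ (Proposition~\ref{retractions new}), so $w\in I(\eta_1,\eta_2)$. The $1$-Lipschitz property of the median map (Lemma~\ref{gate-projections are 1-Lip}) gives
\[d(w_0,w)\leq d(\xi_1,\eta_1)+d(\xi_2,\eta_2)<+\infty,\]
since $\xi_i,\eta_i$ both lie in the component $Z_i$. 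Hence $w\in W\cap I(\eta_1,\eta_2)$, proving the intersection claim.

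For the rank bound, by Proposition~\ref{pi_Z 2}(2) and Lemma~\ref{rank with a subset} pick pairwise-transverse $\mf{h}_1,\ldots,\mf{h}_k\in\mscr{H}(X)$ such that $\{\wt{\mf{h}_i}\cap Z_1\}_{i\leq k}$ realises the rank of $Z_1$; choose $\xi_1^\epsilon\in Z_1\cap\bigcap_i\wt{\mf{h}_i}^{\epsilon_i}$ for each $\epsilon\in\{\pm 1\}^k$ and, after relabelling $\mf{h}_i\leftrightarrow\mf{h}_i^*$, assume $\xi_2\in\bigcap_i\wt{\mf{h}_i^*}$. The crux is to produce a further halfspace $\mf{h}_{k+1}\in\mscr{H}(X)$ pairwise transverse with $\mf{h}_1,\ldots,\mf{h}_k$ in $X$ and satisfying $Z_1\cu\wt{\mf{h}_{k+1}^*}$ and $Z_2\cu\wt{\mf{h}_{k+1}}$. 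To obtain it, note that $d(\xi_1,\xi_2)=+\infty$ because $Z_1\neq Z_2$, so by Lemma~\ref{Dilworth for differences} the set $\s_{\xi_2}\setminus\s_{\xi_1}$ contains an infinite chain of infinite $\wh\nu$-measure. Using the finite-rank chain bounds of Proposition~\ref{trivial chains of halfspaces} combined with the convexity of the $Z_i$ in $\overline X$, cofinitely many elements of this chain actually separate $\overline{Z_1}$ from $\overline{Z_2}$. Among such separating halfspaces, those that fail transversality with some $\mf{h}_i$ must satisfy $\mf{k}\cu\mf{h}_i^*$ or $\mf{h}_i\cu\mf{k}$; each such family has finite $\wh\nu$-measure, controlled by the (finite) within-component distances $d(\xi_1^{(+)},\xi_1^{(-)})<+\infty$ in $Z_1$ and the analogous distances witnessed by the rank-realising hypercube in $Z_2$. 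Subtracting these finite-measure sets from the infinite-measure collection of separating halfspaces leaves a nonempty residue from which $\mf{h}_{k+1}$ may be chosen.

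Granted $\mf{h}_{k+1}$, Helly's theorem in $X$ (applied to the pairwise-transverse family $\mf{h}_1,\ldots,\mf{h}_{k+1}$) produces $y\in X\cap\bigcap_i\wt{\mf{h}_i}\cap\wt{\mf{h}_{k+1}}$ and $y'\in X\cap\bigcap_i\wt{\mf{h}_i}\cap\wt{\mf{h}_{k+1}^*}$. The medians $w^\epsilon:=m(y,\xi_1^\epsilon,\xi_2)$ and $(w')^\epsilon:=m(y',\xi_1^\epsilon,\xi_2)$ all lie in $W$ by the distance bound of the first paragraph. Since $y,y'\in\wt{\mf{h}_i}$ and $\xi_2\in\wt{\mf{h}_i^*}$, the majority rule gives $w^\epsilon,(w')^\epsilon\in\wt{\mf{h}_i}^{\epsilon_i}$; and since $\xi_1^\epsilon\in Z_1\cu\wt{\mf{h}_{k+1}^*}$ and $\xi_2\in Z_2\cu\wt{\mf{h}_{k+1}}$, the majority rule gives $w^\epsilon\in\wt{\mf{h}_{k+1}}$ and $(w')^\epsilon\in\wt{\mf{h}_{k+1}^*}$. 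Every four-way intersection $\wt{\mf{h}_i}^\pm\cap\wt{\mf{h}_j}^\pm\cap W$ is therefore nonempty for every pair $1\leq i<j\leq k+1$, so $\wt{\mf{h}_1}\cap W,\ldots,\wt{\mf{h}_{k+1}}\cap W$ are $k+1$ pairwise-transverse halfspaces of $W$, yielding $\text{rank}(W)\geq k+1$.

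The hardest step is the construction of $\mf{h}_{k+1}$ with simultaneous transversality and separation properties; this is where the interplay between $Z_1$ and $Z_2$, the finite-rank chain bounds, and the measure theory of halfspaces is essential. Once $\mf{h}_{k+1}$ is in hand, the verification of pairwise transversality in $W$ reduces to direct bookkeeping with the majority rule for the median.
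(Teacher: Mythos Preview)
Your first paragraph is correct and in fact cleaner than the paper's treatment of the intersection claim: defining $W$ as the component of $m(x,\xi_1,\xi_2)$ for any $x\in X$, $\xi_i\in Z_i$, and using the $1$-Lipschitz property of $m$ is exactly right, and one checks that this $W$ agrees with the component the paper eventually produces.

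The rank argument, however, has a genuine gap: the halfspace $\mf{h}_{k+1}$ you seek need not exist. Take $X=(\R^2,\ell^1)$, $Z_1=\{-\infty\}\times\R$ and $Z_2=\R\times\{+\infty\}$; both have rank $1$. Every halfspace of $X$ is of the form $\{x\gtrless a\}$ or $\{y\gtrless b\}$, and none of these contains $Z_2$ while being disjoint from $Z_1$. So there is \emph{no} halfspace separating $Z_1$ from $Z_2$, let alone one transverse to your $\mf{h}_1$. In particular your claim that ``cofinitely many elements of the chain separate $\overline{Z_1}$ from $\overline{Z_2}$'' is false here: with $\xi_1=(-\infty,0)$ and $\xi_2=(0,+\infty)$, the set $\s_{\xi_2}\setminus\s_{\xi_1}$ decomposes into two infinite-measure chains (horizontal and vertical half-planes), neither of which contains any separating halfspace. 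The invocation of Proposition~\ref{trivial chains of halfspaces} does not salvage this, and your vague appeal to ``the rank-realising hypercube in $Z_2$'' is never made precise.

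The paper avoids this by working symmetrically with transverse families $\mf{h}_1,\dots,\mf{h}_k\in\mscr{H}_{Z_1}$ \emph{and} $\mf{k}_1,\dots,\mf{k}_k\in\mscr{H}_{Z_2}$, orienting them carefully relative to each other and to chosen $\xi_1,\xi_2$. The extra halfspace $\mf{h}\in\s_{\xi_2}\setminus\s_{\xi_1}$ is then found by excluding the finite-measure sets $\s_{\mf{h}_i}\cap\s_{\xi_1}^*$, $\s_{\mf{k}_j}\cap\s_{\mf{h}_i^*}^*$, $\s_{\xi_2}\cap\s_{\mf{k}_j^*}^*$ (finiteness coming from $d(\xi_1,\wt{\mf{h}}_i\cap Z_1)$, $d(\mf{h}_i^*,\mf{k}_j)$, $d(\xi_2,\wt{\mf{k}}_j^*\cap Z_2)$). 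The conclusion is only that $\mf{h}$ is transverse to \emph{either} all the $\mf{h}_i$ \emph{or} all the $\mf{k}_j$; one does not get separation of $Z_1$ from $Z_2$, and one does not get to choose which family $\mf{h}$ is transverse to. The points $\xi_1',\xi_2'\in I(\xi_1,\xi_2)$ witnessing rank $k+1$ in $W$ are then built as medians $m(\xi_1,\xi_2,x_i)$ with $x_i\in X$ chosen via Helly's theorem using $\mf{h}$ together with whichever family it is transverse to. Your final bookkeeping paragraph can be adapted to this setup, but the crucial input from the second component cannot be dispensed with.
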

\begin{proof}
As in the proof of Proposition~\ref{pi_Z 2}, Lemma~\ref{rank with a subset} yields pairwise-trans\-verse halfspaces $\mf{h}_1,...,\mf{h}_k\in\mscr{H}_{Z_1}$. Suppose that $\mf{h}_i\in\mscr{H}_{Z_2}$ if and only if $i\leq s$, for some $0\leq s\leq k$. Similarly, let $\mf{k}_1,...,\mf{k}_k\in\mscr{H}_{Z_2}$ be pairwise transverse, with $\mf{k}_j\in\mscr{H}_{Z_1}$ if and only if $j\leq t$, for some $0\leq t\leq k$. 

Up to replacing some of these halfspaces with their complements, we can assume that $\mf{h}_i\cap\mf{k}_j\neq\emptyset$ and $\mf{h}_i^*\cap\mf{k}_j^*\neq\emptyset$ for all $1\leq i,j\leq k$ and, in addition, $\mf{k}_j^*\cap Z_1\neq\emptyset$ and $\mf{h}_i\cap Z_2\neq\emptyset$. This can be achieved as follows. We start by ensuring that $\mf{h}_i\in\s_{Z_2}$ and $\mf{k}_j\in\s_{Z_1}^*$ if $i>s$ and $j>t$. If $i\leq s$ and there exists $1\leq j\leq k$ such that $\mf{h}_i$ and $\mf{k}_j$ are not transverse, we pick the side of the wall $\{\mf{h}_i,\mf{h}_i^*\}$ that intersects both $\mf{k}_j$ and $\mf{k}_j^*$; since the $\mf{k}_h$ are pairwise transverse, they all determine the same side of $\{\mf{h}_i,\mf{h}_i^*\}$. Finally, we pick sides for the walls $\{\mf{k}_j,\mf{k}_j^*\}$ with $j\leq t$ in a similar way. Now, Helly's Theorem implies that there exist points
\begin{align*}
\xi_1&\in\wt{\mf{h}}_1^*\cap...\cap\wt{\mf{h}}_k^*\cap\wt{\mf{k}}_1^*\cap...\cap\wt{\mf{k}}_k^*\cap Z_1, \\
\xi_2&\in\wt{\mf{h}}_1\cap...\cap\wt{\mf{h}}_k\cap\wt{\mf{k}}_1\cap...\cap\wt{\mf{k}}_k\cap Z_2.
\end{align*}
The set $\s_{\xi_2}\setminus\s_{\xi_1}$ has infinite measure, since $Z_1$ and $Z_2$ are distinct. On the other hand, the sets $\s_{\mf{h}_i}\cap\s_{\xi_1}^*$, $\s_{\mf{k}_j}\cap\s_{\mf{h}_i^*}^*$ and $\s_{\xi_2}\cap\s_{\mf{k}_j^*}^*$ all have finite measure; indeed, $d(\xi_1,\wt{\mf{h}}_i\cap Z_1)$, $d(\mf{h}_i^*,\mf{k}_j)$ and $d(\xi_2,\wt{\mf{k}}_j^*\cap Z_2)$ are finite. We conclude that there exists $\mf{h}\in\s_{\xi_2}\setminus\s_{\xi_1}$ not lying in any of these sets; in particular, $\mf{h}$ is either transverse to all the $\mf{h}_i$ or it is transverse to all the $\mf{k}_j$. Without loss of generality, let us assume that we are in the former case. By Helly's Theorem, we can choose points $x_1\in\mf{h}_1^*\cap...\cap\mf{h}_k^*\cap\mf{h}^*$ and $x_2\in\mf{h}_1\cap...\cap\mf{h}_k\cap\mf{h}$; we set $\xi_1':=m(\xi_1,\xi_2,x_1)$, $\xi_2':=m(\xi_1,\xi_2,x_2)$. Observe that $\xi_1',\xi_2'$ belong to the interval $I(\xi_1,\xi_2)$ and, by Lemma~\ref{gate-projections are 1-Lip}, we have $d(\xi_1',\xi_2')\leq d(x_1,x_2)<+\infty$. In particular, $\xi_1'$ and $\xi_2'$ lie in the same component of $\overline X$, which we denote by $W$. Since $\wt{\mf{h}}_1,...,\wt{\mf{h}}_k,\wt{\mf{h}}$ all separate $x_1$ and $x_2$ and they intersect $I(\xi_1,\xi_2)$ nontrivially, they also separate $\xi_1'$ and $\xi_2'$. Hence, $\mf{h}_1,...,\mf{h}_k,\mf{h}$ all lie in $\mscr{H}_{W}$ and $\text{rank}(W)\geq k+1$.

Finally, $I(\eta_1,\eta_2)$ intersects $W$ for all $\eta_i\in Z_i$. Indeed, projecting $\xi_1'$ to $I(\eta_1,\eta_2)$ we only move it by a finite amount:
\begin{align*}
d\left(\xi_1',m(\eta_1,\eta_2,\xi_1')\right)&=d\left(m(\xi_1,\xi_2,\xi_1'),m(\eta_1,\eta_2,\xi_1')\right) \\
&\leq d\left(\xi_1,\eta_1\right)+d\left(\xi_2,\eta_2\right)<+\infty.
\end{align*}
\end{proof}

\begin{cor}\label{unique cc of max rk}
Every convex subset $C\cu\overline X$ intersects a unique component of $\overline X$ of maximal rank.
\end{cor}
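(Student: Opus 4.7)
The plan is to derive the corollary directly from Proposition~\ref{hamster} and part~3 of Proposition~\ref{pi_Z 2}, splitting on whether or not the component containing $X$ meets $C$. I would assume $C$ is nonempty (the statement being vacuous otherwise); since $X$ is finite rank, Lemma~\ref{rk M bar} gives $\text{rank}(\overline X)=\text{rank}(X)<+\infty$, so the supremum $k$ of the ranks of components of $\overline X$ that meet $C$ is attained.

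The first case is $X\cap C\neq\emptyset$. Here $X$ itself is a component of $\overline X$ of rank $\text{rank}(X)$ meeting $C$. By part~3 of Proposition~\ref{pi_Z 2}, every component of $\partial X$ has rank at most $\text{rank}(X)-1$. Hence $X$ is the unique component of maximal rank meeting $C$, and we are done.

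The remaining case is $C\cu\partial X$. I would argue by contradiction: suppose there exist two distinct components $Z_1,Z_2\cu\partial X$, both of rank $k$, that meet $C$. Pick $\eta_i\in C\cap Z_i$. Applying Proposition~\ref{hamster} to $Z_1,Z_2$ produces a component $W\cu\overline X$ with $\text{rank}(W)\geq k+1$ and $W\cap I(\eta_1,\eta_2)\neq\emptyset$. Convexity of $C$ then gives $I(\eta_1,\eta_2)\cu C$, so $W\cap C\neq\emptyset$, contradicting the maximality of $k$.

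There is no real obstacle here: the only subtlety is the split into the cases $X\cap C\neq\emptyset$ and $C\cu\partial X$, imposed by the fact that Proposition~\ref{hamster} is stated only for pairs of boundary components. Once this split is in place, both cases follow at once from the cited results.
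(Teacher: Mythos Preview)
Your proof is correct and follows the same idea as the paper's: assume two distinct components of maximal rank meet $C$, then invoke Proposition~\ref{hamster} to produce a higher-rank component meeting $I(\eta_1,\eta_2)\cu C$, a contradiction. The paper's proof is terser and does not make your case split explicit; it simply applies Proposition~\ref{hamster} to $Z_1,Z_2$ directly.

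Your case split is not strictly necessary, though it is a clean way to handle the issue you correctly identified: Proposition~\ref{hamster} is stated only for components of $\partial X$. An alternative (closer to the paper's implicit reasoning) is to observe that if $Z_1,Z_2$ are distinct components of maximal rank meeting $C$, then neither can be $X$: by part~3 of Proposition~\ref{pi_Z 2} every boundary component has rank strictly less than $\text{rank}(X)$, so if one of $Z_1,Z_2$ were $X$ the other could not match its rank. Hence both lie in $\partial X$ and Proposition~\ref{hamster} applies. Either route is fine; yours simply front-loads this observation as a separate case.
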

\begin{proof}
Suppose $C$ intersects two distinct components $Z_1,Z_2$ of $\overline X$ of maximal rank. Given $\eta_1\in C\cap Z_1$ and $\eta_2\in C\cap Z_2$, we have $I(\eta_1,\eta_2)\cu C$ and this interval intersects a component of $\overline X$ of strictly higher rank by Proposition~\ref{hamster}, a contradiction.
\end{proof}

\bibliography{mybib}
\bibliographystyle{alpha}

\end{document}